\newcommand{\po}{\left(}
\newcommand{\pf}{\right)}
\newcommand{\co}{\left[}
\newcommand{\cf}{\right]}
\newcommand{\cco}{\llbracket}
\newcommand{\ccf}{\rrbracket}
\newcommand{\R}{\mathbb R} 
\newcommand{\T}{\mathbb T} 
\newcommand{\Z}{\mathbb Z}
\newcommand{\PP}{\mathbb P}
\newcommand{\E}{\mathbb E} 
\newcommand{\N}{\mathbb N} 
\newcommand{\dd}{\text{d}}
\newcommand{\na}{\nabla}
\newcommand{\1}{\mathbbm{1}}
\newcommand{\veps}{\varepsilon}
\newtheorem{thm}{Theorem}
\newtheorem{assu}{Assumption}
\newtheorem{lem}[thm]{Lemma}
\newtheorem{prop}[thm]{Proposition}
\newtheorem{corollary}[thm]{Corollary}
\newcommand{\new}[1]{#1}
\title{The velocity jump Langevin process and its splitting scheme: long time convergence and numerical accuracy}
\author[1,2,3]{Nicolaï Gouraud}
\author[4]{Lucas Journel}
\author[1,2,5]{Pierre Monmarch{\'e}}
\affil[1]{Sorbonne Université, LCT, UMR 7616 CNRS, Paris, France}
\affil[2]{Sorbonne Université, LJLL, UMR 7598 CNRS, Paris, France}
\affil[3]{Qubit Pharmaceuticals, Advanced Research Department, Paris, France}
\affil[4]{Institut de Mathématiques, Université de Neuchâtel, Switzerland}
\affil[5]{Institut Universitaire de France, Paris, France}
\begin{document}

\maketitle

\begin{abstract}
    The Langevin dynamics is a diffusion process  extensively used, in particular in molecular dynamics simulations, to sample Gibbs measures. Some alternatives based on (piecewise deterministic) kinetic velocity jump processes have gained interest over the last decade. One interest of the latter is the possibility to split forces (at the continuous-time level), reducing the numerical cost for sampling the trajectory. Motivated by this, a numerical scheme based on hybrid dynamics combining velocity jumps and Langevin diffusion, numerically more efficient than their classical Langevin counterparts, has been introduced for computational chemistry in \cite{Weisman}. The present work is devoted to the numerical analysis of this scheme. Our main results are, first, the exponential ergodicity of the continuous-time velocity jump Langevin process, second, a Talay-Tubaro expansion of the invariant measure of the numerical scheme \new{on the torus}, showing in particular that the scheme is of weak order $2$ in the step-size and, third, a bound on the quadratic risk of the corresponding practical MCMC estimator (possibly with Richardson extrapolation). With respect to previous works on the Langevin diffusion, new difficulties arise from the jump operator, which is  non-local.
\end{abstract}

\section{Motivations and overview}

Molecular dynamics (MD) is a popular numerical tool to infer macroscopic properties of matter from simulations at the microscopic level. In the framework of statistical physics, thermodynamical quantities are seen as average values of certain functions called observables with respect to a probability measure. The \new{primary objective} of MD is to efficiently sample these measures, \new{typically} by computing long trajectories of stochastic processes. Here, we focus on the so-called canonical ensemble, in which the number of particles, the volume of the system
and its temperature are fixed.

Let us consider a system of $N$ interacting particles. \new{Denote by $x\in\mathcal{X}$ their positions and $v\in\R^{3N}$ their velocities, with $\mathcal X = \R^{3N}$ or $\mathcal{X} = \T^{3N}=\R^{3N}/\Z^{3N}$ if periodic boundary conditions are enforced.} Let $M = \text{diag}(m_1 I_3,\dots, m_N I_3)$ be the mass matrix of the particles, $\beta = 1/(k_B T)$ the inverse temperature of the system (where $k_B$ is the Boltzmann constant), $U:\R^{3N}\rightarrow \R$ the potential energy function encoding the interactions between particles and finally $H(x,v) =  U(x) + \frac{1}{2}v^T M v$ the Hamiltonian of the system, corresponding to its total energy. 
In the canonical ensemble, \new{the statistical distribution of microscopic configurations is described by} the Boltzmann-Gibbs measure, defined by
\begin{equation}\label{eq:Gibbs-measure}
    \dd\mu(x,v) = \mathcal{Z}_\mu^{-1}\exp(-\beta H(x,v))\dd x\dd v\,,
\end{equation}
where $\mathcal{Z}_\mu = \int_{\R^{6N}} \exp(-\beta H(x,v))\dd x\dd v$. \new{In most cases, computing macroscopic quantities of the form $\E_\mu[\varphi]$ analytically or with deterministic numerical quadrature is out of reach, as they are} high-dimensional integrals involving an unknown constant $\mathcal{Z}_\mu$. \new{Instead, they can be approximated} by simulating a Markov process $(X_t,V_t)_{t\geqslant 0}$ that is ergodic with respect to $\mu$, which means that for any $\varphi$ (in a certain class of functions), almost surely:
\[\frac{1}{t}\int_0^t \varphi(X_s,V_s)\dd s \underset{t\rightarrow\infty}\longrightarrow \int_{\R^{6N}} \varphi(x,v)\dd \mu(x,v) = \E_\mu[\varphi].\]
A popular process that has this property (under mild conditions on the potential $U$) is the kinetic Langevin diffusion, defined as the solution of the following SDE:
\begin{equation}\label{langevin}
\left\{\begin{array}{rcl}
\dd X_t & = &  V_t \dd t \\
\dd V_t & = & -M^{-1}\nabla U (X_t) \dd t - \gamma M^{-1} V_t \dd t - M^{-1}\sqrt{2\gamma\beta^{-1}}\dd \new{B_t} \,,
\end{array}\right.
\end{equation}
where $\new{(B_t)_{t\geqslant 0}}$ is a standard Brownian motion in $\R^{3N}$ and $\gamma > 0$ is a friction parameter. \new{In practice, the Langevin diffusion cannot be simulated exactly, leading to discretization splitting schemes such as  BAOAB~\cite{BouRabee,baoab1,baoab2,stoltzsplitting} where the parts of the dynamics corresponding  to the free transport, the forces and the friction/dissipation  are  simulated separately.}

When simulating the dynamics, the most expensive part is the computation of the forces $\nabla U$. A common approach to reduce the cost is to substitute this gradient with a stochastic version, as in the stochastic gradient descent in optimization (ubiquitous in machine learning \cite{gradient-sto}, see also examples in MD in \cite{rbm,ewald-randombatch,poier-gradsto}), or to treat various parts of the potentials with different time steps (multi-time-step methods), see~\cite{multitimestep}. We are interested here in an alternative approach that involves replacing \eqref{langevin} with a hybrid model combining a classical Langevin diffusion and a piecewise deterministic Markov process. This hybrid model still samples from $\mu$ but can be simulated using a numerical splitting scheme that requires fewer gradient computations per time step. 

The idea involves decomposing $\na U$ as a sum of vector fields $(F_i)_{0\leqslant i\leqslant K}$ for some $K\in \N$, where $F_0$ denotes the computationally inexpensive component (typically in MD the short-range forces exhibiting fast variation), while the $F_i$ terms, for $i\geqslant 1$, represent long-range forces, which are computationally more demanding than $F_0$ (as each atom interacts with all others through these forces, unlike the short-range ones). Then, we define the velocity jump Langevin process $(X_t,V_t)_{t\geqslant 0}$ as the Markov process which follows the Langevin diffusion process associated with the force $F_0$, with the velocity $V_t$ undergoing additional random jumps at rate $\lambda_i(x,v)$ following a jump kernel $q_i(x,v,\dd v')$ (both defined below in~\eqref{eq:lambdaiqi}), that depend on $F_i$ in a way that ensures that the equilibrium measure of the process is indeed the Gibbs measure $\mu$. 

This process can be simulated with a splitting scheme similar to BAOAB, but with an additional part arising from the velocity jumps. The simulation \new{of the jump times} is based on the thinning method, see~\cite{poisson1,poisson2}. Suppose that $\lambda_i(x,v)\leqslant\overline{\lambda_i}$ for $1\leqslant i \leqslant K$, and denote $\overline{\lambda} = \sum_i \overline{\lambda_i}$. A direct computation on the generator shows that the jumps can be exactly simulated this way: starting from $(x,v)$,
\begin{enumerate}
    \item Draw $\mathcal{E}$ a standard exponential random variable, and let $T = \mathcal{E}/\overline{\lambda}$ be the next jump time proposal.
    \item Draw $I$ in $\cco 1,K\ccf$ such that $\PP (I=i) = \overline{\lambda_i}/\overline{\lambda}$. Propose a jump of type $I$ at time $T$.
    \item Accept the jump with probability $\lambda_I(x,v)/\overline{\lambda_I}$, in which case the velocity is resampled at time $T$ according to $q_I(x,v,\dd v')$, otherwise the velocity at time $T$ is simply $v$ (there is no jump).
\end{enumerate}
Unlike a traditional numerical scheme for the Langevin diffusion (like BAOAB), where the gradient $\nabla U$ is computed at every time step, here we only need to evaluate $F_i$ when a jump of type $i$ is proposed (at step 3 above), which does not occur at every time step for every $i$. However, similarly to BAOAB and contrary to multi-time-step splitting methods, there is still a unique step-size in the discretization and no additional parameters to tune: the frequency at which $F_i$ is evaluated is random and is  adapted to each force through the bound $\overline{\lambda_i}$. By contrast, when using piecewise deterministic Markov processes such as the Bouncy Particle (BPS)~\cite{bps,M24,peters2012rejection} or Zig-Zag \cite{ZigZag} samplers (which can be seen as particular cases of the velocity jump Langevin diffusion where $F_0=0$ and $\gamma=0$, so that, between velocity jumps, the deterministic motion is simply the free transport $\dot x= v$), thinning is usually employed to sample exactly the full continuous-time process, without any time-discretization. However, when $F_0\neq 0$, this is only possible when it is a linear function (as in the so-called boomerang sampler \cite{pmlr-v119-bierkens20a}), which is not the case we are interested in. Besides, \new{note} that the use of discretization schemes  even for piecewise deterministic Markov processes have recently gained interest  \cite{bertazzi2021approximations,M45,corbella_automatic,cotter2020nuzz}.

\new{
The idea of the hybrid velocity jump Langevin approach was first introduced in~\cite{Weisman}, in the particular case where the jump mechanism is the one of the BPS, with a decomposition of the forces based on long-range pairwise interactions. Later, in~\cite{GouraudNumeric}, a generalized version was introduced in which the jump mechanism, originally defined in~\cite{MRZ}, can be seen as an interpolation between the BPS and the Hamiltonian dynamics, which is shown to be numerically more stable, while preserving dynamical properties, also of interest in MD.

While~\cite{GouraudNumeric} presents an implementation of the Langevin velocity jump approach on a molecular dynamics code and numerical experiments, the present work is devoted to its mathematical study.} We prove a second order weak error expansion \textit{à la Talay-Tubaro} \cite{talay1990expansion} of the discretization bias on the invariant measure, see Theorem~\ref{thm:TalayTubaro}. The proof is based on  classical weak backward error analysis arguments \cite{kopec2015weak,stoltzsplitting,MST,talay1990expansion} and, in particular, crucially relies on the ergodicity properties of the continuous-time process, which is of interest by itself and is in our case the topic of Theorem~\ref{thm:estimates}.

By comparison with previous works, the main new difficulties arise from the combination of, first,  the degenerate diffusion feature of the kinetic Langevin diffusion, second, the non-local  jump operator and, third, the necessity to get pointwise bounds on derivatives of the semi-group. Indeed, the exponential $L^2$ convergence  estimates obtained for kinetic piecewise deterministic Markov processes in recent works  \cite{Andrieuetal,MRZ} with the Dolbeault-Mouhot-Schmeiser modified norm  \cite{dolbeault2015hypocoercivity} are not sufficient to conduct the rest of the argument, and neither are the results in $V$-norm based on Harris theorem \cite{bierkens2019ergodicity,Calvez,BPS_ergo_Doucet,BPSDGM,M24} or in Wasserstein distances using coupling \cite{deligiannidis2021randomized}. We thus have to work with the $H^1$ modified norm of Villani \cite{villani}, or rather with its extension to higher order Sobolev spaces \cite{Journel1,Journel2,Zhang}. Thus, one of our main contributions is that we successfully implemented this method in a case with a non-local operator (i.e. a non-diffusive Markov process), leading to Theorem~\ref{thm:conv-Hk} (which is an important step in the proof of Theorem~\ref{thm:estimates}). Although the $H^1$ modified norm method has mostly been used for diffusion processes, it has  already been successfully  applied to non-diffusive dynamics in \cite{deligiannidis2021randomized,evans2021hypocoercivity,M14,M21}; however, let us emphasize that, contrary to our case, in  all these works the dynamics is a contraction of the Wasserstein 2 distance, from which the exponential decay of the $H^1$ norm is clear, and in fact the computations to prove the decay of the Wasserstein distance and of the gradient part of the $H^1$ norm are essentially the same (as discussed in \cite{M14,M27,M44}). In particular, in these cases, to get the decay of the $H^1$ norm,  the dissipation of the $L^2$ part of the norm is not exploited, which  in the general case (i.e. without  a Wasserstein 2 contraction) is a crucial feature of Villani's method (as in Lemma~\ref{lem:initialisation} below). Note that this is possible in our case because, although the process undergoes non-local jumps, it also has a diffusive part, from which the $L^2$ dissipation is bounded below by a first-order term which would be the dissipation of the usual Langevin diffusion (cf. the \emph{carré du champ} operator~\eqref{eq:Gamma}).

A second important difficulty is the design of a suitable Lyapunov function for the velocity jump Langevin diffusion, given its combination of diffusive, non-local and kinetic features (see Propositions~\ref{prop:deflyapunov} for the continuous-time process and Proposition~\ref{prop:lyapunovscheme} for the numerical scheme). The last important ingredient for establishing our main result is a finite-time numerical analysis of the discretization bias, Theorem~\ref{thm:finitetimeerror}, whose proof is close  to the proof of  Theorem~2.6 of the recent \cite{M45} (which is a similar result for piecewise deterministic samplers). 

\medskip

This work is organized as follows. \new{In Section~\ref{s-sec:maths-set}}, the process and its splitting scheme are \new{defined}, our main results are stated, \new{and some comments on the method from the applicable angle are given}. Section~\ref{sec:ergodicity} is devoted to the proof of Theorem~\ref{thm:estimates} (the long-time convergence of the continuous-time process). The weak error result, Theorem~\ref{thm:TalayTubaro}, is proven in Section~\ref{sec:weakerror}.

\new{\section{Setting and main results}\label{s-sec:maths-set}}

\subsection{Notations}

In all the following, $|\cdot|$ denotes the Euclidean norm and $\cdot$ the standard dot product. In the case of multi-indices $\alpha\in\N^{d}$, we also denote $|\alpha| = \sum_{i=1}^d\alpha_i$. The set of smooth functions with compact support from $\R^{2d}$ to $\R$ is denoted $\mathcal C_c^{\infty}(\R^{2d})$. For any operators $A$ and $B$, we denote $[A,B] = AB-BA$ their commutator. In the remainder of this work, we will denote by $C$ various constants that may change from line to line.

\new{\subsection{Definitions and assumptions}}

Fix some smooth potential $U\in\mathcal C^\infty(\mathcal X , \R)$ where either $\mathcal X=\R^d$ or $\mathcal X=\T^d$ for some $d\geqslant 1$. In the case of particle systems, $d=3N$.  For simplicity, in the remainder of this work, we fix $\beta = 1$, $M = I_d$ and consider the following decomposition of the forces. Assuming that $U=U_0+U_1$ with $U_0,U_1$ smooth, \new{let}
\begin{equation}\label{eq:decomposition}
F_0 = \nabla U_0,\quad F_i =\partial_i U_1 e_i,\,i\in\cco 1,d\ccf\,,
\end{equation}
where $(e_i)_{1\leqslant i\leqslant d}$ stands for the canonical basis of $\R^d$.
In other words, we decompose $\nabla U_1$ in a similar fashion as in the Zig-Zag process: 
\begin{equation*}\label{zigzag}
\nabla U_1(x) = \sum_{i=1}^d  \partial_i U_1(x) e_i\,.
\end{equation*}
Fix a friction parameter $\gamma>0$, a jump parameter $\rho\in[-1,1)$ and an activation function $\Psi\in\mathcal C^\infty(\R,\R_+)$. The velocity jump Langevin process is defined as the Markov process on $\mathcal X\times\R^d$ with infinitesimal generator given by 
\begin{equation}
    \label{eq:generator}
    \mathcal{L} = \mathcal{L}_H+\mathcal{L}_D+\mathcal{L}_J\,,
\end{equation}
where for any function $h\in\mathcal C^{\infty}_c\po\mathcal X\times \R^{d} \pf$:
\[
\begin{array}{rcll}
\mathcal{L}_H h(x,v) & = & v\cdot \nabla_x h(x,v) - \nabla U_0(x)\cdot\nabla_v h(x,v) & \text{ (Hamiltonian dynamics)}\\& & \\
\mathcal{L}_D h(x,v) & = & - \gamma v\cdot\nabla_v h(x,v) + \gamma \Delta_v h(x,v) & \text{ (friction/dissipation)}\\ & &\\
\mathcal{L}_J h(x,v) & = &\sum_{i=1}^d \mathcal{L}_J^i h(x,v) & \text{ (velocity jumps)}
\end{array}\]
with
\[
\mathcal{L}_J^i h(x,v) = \lambda_i(x,v) \int_{\R^d} \po h(x,v')-h(x,v)\pf q_i(x,v,\dd v'),
\]
where $\lambda_i,q_i$ satisfy
\begin{equation}
    \label{eq:lambdaiqi}
    \lambda_i(x,v)\int_{\R^d} h(x,v')q_i(x,v,\dd v') = \frac{2}{1-\rho} \E \co h\po x,V^i\pf \Psi\po\frac{\partial_i U_1(x)}{2}\po v_i-V^i_i\pf\pf\cf,
\end{equation}
and where
\[V_i^i = \rho v_i + \sqrt{1-\rho^2}\new{\xi}\,,\qquad V^i_j = v_j\quad\forall j\neq i,\]
and $\new{\xi}\sim\mathcal{N}(0,1)$ is a one-dimensional standard Gaussian variable. In the definition above, $q_i(x,v,\cdot)$ is a probability measure for all $(x,v)\in\mathcal X\times\R^{d}$, in other words $\lambda_i$, for all $i\in \cco 1,d \ccf$, is obtained by taking $h=1$ in the formula above. 

We will work under the following set of assumptions on $U$ and $\Psi$.

\begin{assu}\label{assu}
\begin{enumerate}
    \item the Gibbs measure is well defined, i.e. $\int_{\mathcal X}e^{-U(x)} \dd x <\infty$.
    \item The derivatives of $U_1$ of all order are bounded.
    \item $\na^2 U_0$ as well as its derivatives of all order are bounded, and there exist $\kappa>0$ and a compact set $\mathcal K\subset \mathcal X $ such that for all $x\notin \mathcal K$
\[-x\cdot \na U_0(x) \leqslant  -\kappa |x|^2. \]
    \item \new{$\Psi$ has all its derivatives bounded}, and for all $s\in\R$,
\begin{equation}\label{eq:Psi}
\Psi(s) - \Psi(-s) = s.
\end{equation}
\end{enumerate}
\end{assu}

Under Assumption~\ref{assu}, it is not difficult to see that the process is well-defined, see~\cite{MRZ}. Let $(X_t,V_t)$ be a Markov process with generator $\mathcal L$, and let $(P_t)_{t\geqslant 0}$ be the associated semi-group,
\[
P_tf(x,v) = \E_{(x,v)}\co f(X_t,V_t) \cf,
\]
defined for suitable $f:\mathcal X\times\R^{d}\to \R$. Define as well the set of admissible functions:
\[
 \mathcal A :=  \left\{ f\in\mathcal C^{\infty}(\mathcal X\times \R^d,\R)|\forall \alpha\in\N^{2d},\,\exists\, C>0,\,0<c<1,\, |\partial^{\alpha} f|\leqslant Ce^{cH}\right\}.
\]

Let us now introduce a splitting scheme for the process.  Starting from~\eqref{eq:generator}, we further split the Hamiltonian part as $\mathcal L_H = \mathcal L_A + \mathcal L_B$ where $\mathcal L_A = v\cdot\na_x$ stands for the free transport and $\mathcal L_B=-\na U_0\cdot\na_v$ for the acceleration. Motivated by the Trotter/Strang formula
\begin{equation}
    \label{eq:TrotterStrang}
    e^{t\mathcal L} = e^{\frac t2\mathcal L_B} e^{\frac t2\mathcal L_J}  e^{\frac t2\mathcal L_A} e^{t\mathcal L_O} e^{\frac t2\mathcal L_A} e^{\frac t2\mathcal L_J} e^{\frac t2\mathcal L_B} + \underset{t\rightarrow 0}{\mathcal O}(t^3)\,,
\end{equation}
given a step-size $\delta>0$, we call BJAOAJB the Markov chain $(\overline X_n,\overline V_n)$ on $\mathcal X\times\R^d$ with transition kernel
\begin{equation}\label{eq:transition-scheme}
Q  =  e^{\frac \delta 2\mathcal L_B} e^{\frac \delta  2\mathcal L_J}  e^{\frac \delta  2\mathcal L_A} e^{\delta  \mathcal L_O} e^{\frac \delta  2\mathcal L_A} e^{\frac \delta 2\mathcal L_J} e^{\frac \delta 2\mathcal L_B}\,. 
\end{equation}
From the definition of $\mathcal L_A$, $\mathcal L_B$, $\mathcal L_J$ and $\mathcal L_O$, we have, for $t>0$,
\begin{align*}
    e^{t \mathcal L_B} f\po x,v\pf &= f\po x,v-t\nabla U_0(x)\pf, \\ 
    e^{t \mathcal L_A} f\po x,v\pf &= f\po x+tv,v\pf, \\
    e^{t \mathcal L_O} f\po x,v\pf &= \mathbb E \co f(x,e^{-\gamma t}v+\sqrt{1-e^{-2\gamma t}}\new{\xi})\cf\qquad \text{with }\new{\xi}\sim\mathcal N(0,I_d),  \\
    e^{t \mathcal L_J} f\po x,v\pf &= \mathbb E \co f(x,W_t)\cf, 
\end{align*} 
where $(W_s)_{s\geqslant 0}$ is a continuous-time Markov chain on $\R^d$ initialized at $W_0=v$ and with jump rate $\lambda(x,\cdot)$ and jump kernel $q(x,\cdot,\cdot)$ given by 
\[\lambda(x,w) = \sum_{i=1}^{d} \lambda_i(x,w)\,,\qquad \lambda(x,w) q(x,w,\dd w') = \sum_{i=1}^d  \lambda_i(x,w)  q_i\po x,w,\dd w'\pf  \,. \]
Hence, one transition of the chain $(\overline X_n,\overline V_n)$ is given by the succession of steps BJAOAJB, where:
\begin{align*}
    (B)&: v\gets v-\frac{\delta}2\nabla U_0(x) \\
    (J)&: v \gets W_{\delta/2}\\
    (A)&: x\gets x+\frac{\delta}2 v \\ 
    (O)&: v\gets e^{-\gamma\delta}v+\sqrt{1-e^{-2\gamma\delta}}\new{\xi} \,.
\end{align*}

\new{\subsection{Main results}}

Our first main result, established in Section~\ref{sec:ergodicity}, gives point-wise estimates for the exponential long-time convergence of the semi-group, together with all its derivatives.

\begin{thm}\label{thm:estimates}
Under Assumption~\ref{assu}, for all $f\in\mathcal A$ and all multi-index $\alpha\in\N^{2d}$, there exist $C,q>0$, $b\in (0,1)$, such that for all $t>0$, $(x,v)\in\R^{2d}$, 
\begin{equation}
    \label{eq:thm_ergodicite}
    |\partial^{\alpha}(P_tf-\mu(f))(x,v)| \leqslant Ce^{-qt}e^{bH(x,v)}.
\end{equation}
\end{thm}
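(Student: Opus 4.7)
The plan is to combine three ingredients: a pointwise propagation estimate for the semigroup, the $H^k(\mu)$ exponential decay from Theorem~\ref{thm:conv-Hk}, and a localized Sobolev embedding which converts integrated bounds into pointwise ones.

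First, I would prove an a priori propagation estimate: for any $f\in\mathcal A$, $\alpha\in\N^{2d}$ and $T>0$, there exist $C>0$ and $b\in(0,1)$ such that $|\partial^\alpha P_t f(x,v)|\leqslant C e^{bH(x,v)}$ uniformly for $t\in[0,T]$. This relies on Proposition~\ref{prop:deflyapunov} and on commutation relations $[\partial^\alpha,\mathcal L]$ with $\mathcal L=\mathcal L_H+\mathcal L_D+\mathcal L_J$. The Hamiltonian and friction parts are standard for kinetic Langevin diffusions. For the non-local part $\mathcal L_J^i$, differentiating the identity~\eqref{eq:lambdaiqi} produces derivatives of $\partial_i U_1$ and $\Psi$, which are bounded by Assumption~\ref{assu}, and derivatives of the integrand evaluated at the jumped velocity $V^i$, which after Gaussian integration in $\xi$ remain controlled by $e^{bH}$. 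A closed system of Lyapunov-type inequalities for the family $(\partial^\alpha P_t f)_{|\alpha|\leqslant m}$ is obtained and iterated on $|\alpha|$.

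Second, up to subtracting $\mu(f)$, which is well-defined on $\mathcal A$ since $e^{-H}$ is integrable, I assume $\mu(f)=0$ and split $P_tf=P_{t-1}(P_1 f)$ for $t\geqslant 2$. The smoothing effect of $\mathcal L_D$, combined with the Gaussian component of the jump kernels (since $\rho<1$) and Step 1, ensures $P_1 f\in H^k(\mu)$ for every $k$. Applying Theorem~\ref{thm:conv-Hk} yields
\[\|P_{t-1}(P_1 f)\|_{H^k(\mu)}\leqslant Ce^{-q(t-1)}\|P_1 f\|_{H^k(\mu)}\leqslant C'e^{-qt}.\]
To go from this integrated bound to a pointwise derivative bound, I fix $(x,v)$ and consider the unit ball $B=B_1(x,v)\subset\mathcal X\times\R^d$. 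On $B$ the weight $e^{-H}$ varies only by a factor of at most polynomial growth in $|x|+|v|$. The Sobolev embedding $H^k(B)\hookrightarrow C^{|\alpha|}(B)$ for $k>2d+|\alpha|$ gives
\[|\partial^\alpha P_tf(x,v)|\leqslant C\|P_tf\|_{H^k(B)}\leqslant Ce^{b'H(x,v)}\|P_tf\|_{H^k(\mu)}\]
for some $b'\in(1/2,1)$ absorbing the polynomial prefactor. Combining this with the $H^k(\mu)$ decay gives~\eqref{eq:thm_ergodicite} for $t\geqslant 2$; the case $t\in[0,2]$ follows directly from Step~1.

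The main obstacle I expect is the a priori derivative propagation in Step~1. Commuting $\partial^\alpha$ through the non-local operator $\mathcal L_J$ while keeping the exponential growth exponent strictly less than $1$ is the delicate point: one must carefully balance the dissipation provided by $\mathcal L_D$ on $e^{bH}$ against the growth contributed by $\mathcal L_J$ (whose differentiation involves cross terms coming from the dependence of $\lambda_i$ and $q_i$ on both $x$ and $v$), and close the resulting system of differential inequalities over multi-indices so that the bound remains uniform in $t\in[0,T]$ and compatible with the class $\mathcal A$.
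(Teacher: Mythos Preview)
Your route differs genuinely from the paper's, and your localized Sobolev trick is a nice idea, but there is a real gap.

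\textbf{What the paper does versus what you do.} After Theorem~\ref{thm:conv-Hk} the paper explicitly remarks that Sobolev embedding from the unweighted $H^k(\mu)$ norm would only yield~\eqref{eq:thm_ergodicite} with $b=1$. To get $b<1$ it proves the \emph{weighted} decay of Proposition~\ref{prop:conv-lyapunov}, $\int |\partial^\alpha (P_tf-\mu(f))|^2 V_b\, \dd\mu \leqslant Ce^{-\eta t}$, and then applies the global Sobolev embedding to the product $P_tf\cdot e^{-bH}$: after Leibniz the cross terms become precisely the weighted integrals controlled by Proposition~\ref{prop:conv-lyapunov}. You bypass Proposition~\ref{prop:conv-lyapunov} entirely and instead localize the embedding on the unit ball $B_1(x,v)$. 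Since $\sup_{B_1(x,v)}e^H\leqslant Ce^{H(x,v)}e^{C(1+|x|+|v|)}$ under Assumption~\ref{assu}, this correctly produces $|\partial^\alpha P_tf(x,v)|\leqslant Ce^{b'H(x,v)}\|P_tf\|_{H^k(\mu)}$ for any $b'>1/2$. That step is fine and is a genuinely different way to squeeze $b<1$ out of an $H^k(\mu)$ bound.

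\textbf{The gap.} Your argument works only once $\|P_1 f\|_{H^k(\mu)}<\infty$, and your justification via ``the smoothing effect of $\mathcal L_D$'' does not deliver this. Hypoelliptic smoothing concerns \emph{regularity}, which is irrelevant here since $f\in\mathcal A$ is already $C^\infty$; what is needed is \emph{square-integrability against $e^{-H}$ at infinity}. A function with $|\partial^\alpha g|\leqslant Ce^{cH}$ lies in $H^k(\mu)$ only when $c<1/2$, whereas $\mathcal A$ permits any $c<1$, and your Step~1 only propagates an exponent strictly below $1$, not below $1/2$. Driving the growth exponent of $P_T f$ and of all its derivatives below $1/2$ in a fixed finite time is a quantitative contraction-at-infinity statement; it cannot be read off from commutator identities or from the velocity diffusion alone (there is no diffusion in $x$), and Proposition~\ref{prop:deflyapunov} by itself only gives the Lyapunov inequality for small exponents $b<b_0$, not for exponents close to $1$. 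This is exactly the difficulty that the paper's weighted-norm machinery (Propositions~\ref{prop:deflyapunov} and~\ref{prop:conv-lyapunov}) is designed to absorb: by carrying the entire hypocoercive computation with the measure $V_b\,\dd\mu$ rather than $\dd\mu$, one never has to force $P_1f$ into the unweighted $H^k(\mu)$ in the first place.
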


\new{
Note that although $e^{bH(x,v)}$ grows fast as $x$ and $v$ go to infinity, this term is an optimal prefactor depending on the initial conditions of the process. When $t=0$, the inequality reads $|\partial^\alpha f(x,v)-\mu(f)|\leqslant Ce^{bH(x,v)}$, which corresponds to the assumption $f\in\mathcal{A}$, i.e. that $f$ and its derivatives grow at most exponentially. In particular, the case $\alpha=0$ in \eqref{eq:thm_ergodicite} corresponds to the weak convergence of the process for all test functions in $\mathcal A$. In addition, since $b<1$, a crucial property of the right-hand side of~\eqref{eq:thm_ergodicite} is that it is $\mu$-integrable, which will be used to control the remainder terms in the expansion of the weak error of the numerical scheme. In addition, ergodicity of the continuous-time process is necessary to deduce the expansion of the invariant measure of the numerical scheme from the finite-time error estimates, by letting $t$ go to infinity.}

\medskip

Formally, Equation~\eqref{eq:TrotterStrang} yields that this palindromic form gives rise to a second-order scheme. Our second main result is that, indeed, the discretization bias on the invariant measure is of order $\delta^2$. Here the analysis is restricted to the compact torus, since this is the main case of interest in practice in MD and otherwise the construction of a Lyapunov function is more intricate for the numerical scheme than the continuous-time process (as in other cases such as \cite{Journel2}).

\begin{thm}
\label{thm:TalayTubaro}
In the case $\mathcal X=\T^d$, under Assumption~\ref{assu}, there exists $\delta_0>0$ such that for all $\delta\in(0,\delta_0]$, the BJAOAJB chain with transition kernel $Q$ admits a unique invariant measure $\mu_\delta$. Moreover, for all $f\in\mathcal A$, there exists $c_f\in\R$ such that
\[\mu_\delta(f) = \mu(f) + c_f \delta^2 + \underset{\delta\rightarrow 0}{\mathcal O}(\delta^3)\,.\]
\end{thm}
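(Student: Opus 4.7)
The first step is to establish existence and uniqueness of $\mu_\delta$ for $\delta$ small enough. Since the $\mathcal{L}_O$ block injects a non-degenerate Gaussian kernel in $v$, $Q$ satisfies a minorization condition on any compact subset of $\T^d\times \R^d$. Combined with the Lyapunov function for $Q$ provided by Proposition~\ref{prop:lyapunovscheme}, Harris--Meyn--Tweedie theory gives geometric ergodicity with a unique $\mu_\delta$, together with a uniform bound $\sup_{\delta\in(0,\delta_0]}\mu_\delta(e^{bH})<\infty$ for some $b<1$.

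The main workhorse is then the telescoping identity obtained from invariance $\mu_\delta Q^n=\mu_\delta$: writing $Q^n-P_{n\delta}=\sum_{k=0}^{n-1} Q^k(Q-P_\delta)P_{(n-1-k)\delta}$ and applying it to $f-\mu(f)$,
\[
\mu_\delta(f)-\mu(f)=\mu_\delta\bigl(P_{n\delta}(f-\mu(f))\bigr)+\sum_{j=0}^{n-1}\mu_\delta\bigl((Q-P_\delta)P_{j\delta}(f-\mu(f))\bigr).
\]
By Theorem~\ref{thm:estimates} applied with $\alpha=0$, the first term is bounded by $Ce^{-qn\delta}\mu_\delta(e^{bH})$ and vanishes as $n\to\infty$, yielding
\[
\mu_\delta(f)-\mu(f)=\sum_{j=0}^{\infty}\mu_\delta\bigl((Q-P_\delta)P_{j\delta}(f-\mu(f))\bigr).
\]

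The heart of the argument is a local backward error expansion. For $g$ smooth, Taylor-expanding each elementary block in $Q$ in powers of $\delta$ and composing them according to the palindromic ordering BJAOAJB, the standard symmetry of the Baker--Campbell--Hausdorff formula for palindromic schemes cancels the $O(\delta^2)$ discrepancy with $e^{\delta\mathcal L}$, giving
\[
Qg=P_\delta g+\delta^3 \mathcal B g+\delta^4 S_\delta g,
\]
where $\mathcal B$ is an explicit finite-order integro-differential operator, built from commutators of $\mathcal L_A,\mathcal L_B,\mathcal L_O,\mathcal L_J$, and $|S_\delta g(x,v)|\leqslant C\sum_{|\alpha|\leqslant m}\|\partial^\alpha g\|_\infty$ for some fixed $m$. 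The delicate point is that $\mathcal L_J$ is non-local: one must differentiate under the jump kernel, using that $\partial_i U_1$ and $\Psi$ together with their derivatives are bounded by Assumption~\ref{assu}, so that on the torus all remainder terms can be bounded pointwise by a finite number of sup-norms of derivatives of $g$. This step closely parallels Theorem~\ref{thm:finitetimeerror}.

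Inserting this expansion into the telescope and invoking Theorem~\ref{thm:estimates} for multi-indices up to order $m$ yields $|\mu_\delta(\mathcal B P_{j\delta}(f-\mu(f)))|+|\mu_\delta(S_\delta P_{j\delta}(f-\mu(f)))|\leqslant C' e^{-qj\delta}$, so
\[
\mu_\delta(f)-\mu(f)=\delta^3 \sum_{j\geqslant 0}\mu_\delta\bigl(\mathcal B P_{j\delta}(f-\mu(f))\bigr)+\delta^4\cdot O(1/\delta).
\]
Recognising the remaining sum as a $\delta$-Riemann sum of the exponentially decaying function $t\mapsto\mu(\mathcal B P_t(f-\mu(f)))$ (and absorbing the $\mu_\delta-\mu$ discrepancy, which is itself $O(\delta^2)$ by a bootstrap of the same identity applied to $\mathcal Bf$, into the overall $O(\delta^3)$ remainder) gives
\[
\delta\sum_{j\geqslant 0}\mu_\delta\bigl(\mathcal B P_{j\delta}(f-\mu(f))\bigr)=c_f+O(\delta),\qquad c_f:=\int_0^\infty \mu\bigl(\mathcal B P_t(f-\mu(f))\bigr)\,\mathrm dt,
\]
which is finite by Theorem~\ref{thm:estimates}, so $\mu_\delta(f)-\mu(f)=c_f\delta^2+O(\delta^3)$ as claimed. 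The main obstacle will be the rigorous pointwise expansion of the non-local jump step with a remainder controlled by only finitely many derivatives of $g$; everything else is the telescoping trick, the palindromic cancellation, and the pointwise decay estimates provided by Theorem~\ref{thm:estimates}.
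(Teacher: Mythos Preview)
Your proposal is correct and follows essentially the same route as the paper: local BCH expansion of the palindromic splitting giving a $\delta^3$ leading operator (the paper's $S_3$, your $\mathcal B$), telescoping against the continuous semi-group, control of remainders via the pointwise decay estimates of Theorem~\ref{thm:estimates}, Riemann-sum-to-integral conversion, and a bootstrap to upgrade $\mu_\delta$ to $\mu$ in the leading coefficient. The only organizational difference is that you integrate the telescope directly against $\mu_\delta$ using its $Q$-invariance, whereas the paper starts from a fixed initial point, obtains the finite-time expansion of Theorem~\ref{thm:finitetimeerror}, and then passes to $\mu,\mu_\delta$ via Ces\`aro averaging; your version is slightly more direct but relies on the same ingredients.

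One imprecision to fix: you write $|S_\delta g(x,v)|\leqslant C\sum_{|\alpha|\leqslant m}\|\partial^\alpha g\|_\infty$, but for $g=P_{j\delta}(f-\mu(f))$ with $f\in\mathcal A$ these sup-norms are not finite (velocities are unbounded). The correct bound, as in the paper's Lemmas~\ref{lem:stability} and~\ref{lem:bornereste}, is in $V_b$-weighted norms, i.e.\ $|S_\delta g|\leqslant C\|g\|_{b,m}V_{b'}$ for some $b<b'<1/2$; combined with Theorem~\ref{thm:estimates} this gives $|S_\delta P_{j\delta}(f-\mu(f))|\leqslant Ce^{-qj\delta}V_{b'}$, and then $\mu_\delta(V_{b'})<\infty$ from Proposition~\ref{prop:lyapunovscheme} closes the argument exactly as you intend.
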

The proof of this theorem is the topic of Section~\ref{sec:weakerror}.
Along the proof of Theorem~\ref{thm:TalayTubaro}, we have to prove that the Markov chain with transition $Q$ is ergodic. In fact we prove a more precise statement, namely that the BJAOAJB chain converges to its equilibrium in $V$-norm exponentially fast with a rate which, as should be expected, is linear in the step-size, as we state now. Given $V:\T^d\times \R^d\rightarrow [1,\infty)$, for $f:\T^d\times\R^d\rightarrow \R$ and a signed measure $\mu$ over $\T^d\times\R^d$ we write $\|f\|_V=\|f/V\|_\infty$ and $\|\mu\|_V = \sup\{\mu(f),\ \|f\|_V\leqslant 1\}$. The next statement is established in Section~\ref{sec:ergodicityscheme}.

\begin{thm}
\label{thm:BJAOAJBergodic}
Let $b\in(0,1/2)$ and $V(x,v)=e^{b|v|^2}$ for $(x,v)\in\T^d\times\R^d$. In the case $\mathcal X=\T^d$, under Assumption~\ref{assu}, there exist $\delta_0,\lambda,C>0$ such that for all $\delta\in(0,\delta_0],(x,v)\in\T^d\times\R^d$,
\[\|\delta_{(x,v)}Q^n    - \mu_\delta\|_V \leqslant C  e^{-\lambda n\delta} V(x,v)\,.\]
\end{thm}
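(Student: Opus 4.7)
The plan is to apply Harris's theorem in its quantitative Hairer--Mattingly form, which yields geometric ergodicity in $V$-norm (as well as existence and uniqueness of $\mu_\delta$) from the combination of a Lyapunov drift condition and a local minorization (Doeblin) condition on sublevel sets of $V$.

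The drift condition is the content of Proposition~\ref{prop:lyapunovscheme}: for $\delta$ small enough, $QV\leqslant (1-c_1\delta)V + c_2\delta$ with $c_1,c_2>0$ independent of $\delta$. Since $\T^d$ is compact and $V(x,v)=e^{b|v|^2}\leqslant R$ bounds $|v|$ uniformly, the sublevel sets $C_R=\{V\leqslant R\}$ are compact in $\T^d\times\R^d$, and on them the jump rates $\lambda_i$ — together with the parameters of the jump kernels — are uniformly bounded thanks to Assumption~\ref{assu}.

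For the minorization, I would fix a macroscopic time $T_0>0$, set $n_0=\lceil T_0/\delta\rceil$, and aim to prove
\[Q^{n_0}((x,v),\cdot)\;\geqslant\;\eta\,\nu(\cdot)\qquad\forall(x,v)\in C_R\,,\]
with $\eta>0$ and a probability measure $\nu$ both independent of $\delta$. The strategy is to restrict to a reference scenario: conditioning on the event that no jump is accepted during the $n_0$ steps and that the velocity stays in a slightly enlarged sublevel set (so that the jump rates themselves are controlled along the trajectory), which by the drift estimate and a uniform bound on the rates on $C_R$ has probability at least some $\eta_0>0$. On this event the chain reduces to a pure BAOAB discretization of the kinetic Langevin diffusion with potential $U_0$, and a standard hypoelliptic argument combining the Gaussian noises of the successive O steps with the A and B flows produces a joint transition density on $\T^d\times\R^d$ bounded below on a suitable open set, uniformly over $(x,v)\in C_R$ and $\delta\in(0,\delta_0]$.

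Harris--Hairer--Mattingly then provides $C,\rho>0$ such that $\|\delta_{(x,v)}Q^{kn_0}-\mu_\delta\|_V\leqslant C(1-\rho)^k V(x,v)$ for all $k\geqslant 0$; writing a general $n$ as $n=kn_0+r$ with $0\leqslant r<n_0$ and using the drift to control $Q^r$ then yields the announced bound with $\lambda=-\log(1-\rho)/T_0$ independent of $\delta$. The main obstacle is the uniform-in-$\delta$ minorization: one must simultaneously control the no-jump probability (which in turn requires keeping the jump rates bounded along the whole trajectory, since $\Psi$ is only linearly bounded under Assumption~\ref{assu}) and the BAOAB transition density (whose lower bound must not degenerate as $\delta\to 0$). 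The latter is precisely why the minorization is sought on a macroscopic time scale $T_0$ rather than after a fixed number of steps: on such a time scale the accumulated Gaussian covariance becomes of order one, yielding a $\delta$-independent density bound, which can be obtained either by a direct computation on the explicit Gaussian joint law between jumps, or by a perturbation argument comparing $Q^{n_0}$ to the continuous-time semigroup $e^{T_0(\mathcal L_H+\mathcal L_D)}$ on the bounded interval $[0,T_0]$.
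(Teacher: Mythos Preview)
Your proposal is correct and follows essentially the same route as the paper: Harris's theorem applied to $Q^{n_0}$ with $n_0\sim 1/\delta$, the drift from Proposition~\ref{prop:lyapunovscheme}, and a local Doeblin condition obtained by restricting to the event of no jump (so that the chain reduces to BAOAB), while controlling the jump rates along the trajectory via the Lyapunov estimate to keep the velocity in a compact set.

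The one notable difference is in how the BAOAB minorization is handled. You sketch a direct hypoelliptic/density lower bound; the paper instead invokes an existing result for BAOAB (the uniform-in-$\delta$ total variation coupling of~\cite{durmus2023}) and works in the coupling formulation of the Doeblin condition rather than the small-set formulation. Concretely, the paper couples two BAOAB chains so they merge by time $\lfloor 1/\delta\rfloor$ with probability $\geqslant\varepsilon$, shows via a supermartingale argument that both stay in a fixed compact $K'$ with probability $\geqslant 1-\varepsilon/2$, and then constructs the BJAOAJB chains by following the BAOAB ones until the first jump time; on the intersection of these events with $\{E>\sup_{K'}\lambda\}$ (where $E$ drives the first jump), no jump occurs and the BJAOAJB chains inherit the merging. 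Your direct approach is perfectly valid and self-contained, but the citation spares the nontrivial uniform-in-$\delta$ density estimate for the discrete hypoelliptic scheme.
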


Combining Theorems~\ref{thm:TalayTubaro} and \ref{thm:BJAOAJBergodic}, classical arguments then allow to bound the quadratic risk of an MCMC estimator based on the BJAOAJB chain, possibly with a Richardson extrapolation, as detailed in Section~\ref{sec:quadraRisk}:

\begin{corollary}
\label{cor:quadraticRisk}
Let $b\in(0,1/4)$ and $V(x,v)=e^{b|v|^2}$ for $(x,v)\in\T^d\times\R^d$.  In the case $\mathcal X=\T^d$, under Assumption~\ref{assu}, there exist $\delta_0>0$ such that for all $f\in\mathcal A$ with $\|f\|_V <\infty$, there exists $C>0$ such that for all $n\in\N$ and all $\delta\in(0,\delta_0)$, given $(X_n,V_n)_{n\in\N}$ a BJAOAJB chain,
 \[\mathbb E \po \left|\frac1n\sum_{k=1}^{n}  f(X_k)-\mu(f)\right|^2\pf \leqslant C \po  \delta^4 + \frac{1}{(n\delta)^2} \mathbb E \po e^{2b|V_0|^2}\pf \pf \,. \]
 Moreover, if $(\widetilde X_n,\widetilde V_n)_{n\in\N}$ is another independent BJAOAJB chain with step-size $\delta/2$,
  \[\mathbb E \po \left|\frac4{3n}\sum_{k=1}^{n}  f(\widetilde X_k)-\frac1{3n}\sum_{k=1}^{n}  f(X_k)-\mu(f)\right|^2\pf \leqslant C \po  \delta^6 + \frac{1}{n\delta}\mathbb E \po e^{2b|V_0|^2} + e^{2b|\widetilde V_0|^2} \pf \pf  \,. \]
 \end{corollary}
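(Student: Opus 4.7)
The plan is a standard bias-variance decomposition in which the bias is split into a finite-time (non-equilibration) part controlled by Theorem~\ref{thm:BJAOAJBergodic} and an asymptotic bias controlled by Theorem~\ref{thm:TalayTubaro}, and in which the variance is handled by summing exponentially decaying covariances. Set $S_n=\frac{1}{n}\sum_{k=1}^n f(X_k)$. I would start from
\[
\mathbb{E}[|S_n-\mu(f)|^2] \leq 2\left(\mathbb{E}[S_n]-\mu(f)\right)^2 + 2\mathrm{Var}(S_n),
\]
and bound the two terms separately, then specialize the argument to the Richardson-extrapolated estimator.

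For the bias, I would decompose $\mathbb{E}[S_n]-\mu(f) = (\mathbb{E}[S_n] - \mu_\delta(f)) + (\mu_\delta(f)-\mu(f))$. Theorem~\ref{thm:TalayTubaro} bounds the second summand by $C\delta^2$. For the first, Theorem~\ref{thm:BJAOAJBergodic} gives pointwise $|Q^k f(x,v) - \mu_\delta(f)| \leq C\|f\|_V e^{-\lambda k\delta} V(x,v)$; integrating against the law of $(X_0,V_0)$ and summing the geometric series produces $|\mathbb{E}[S_n] - \mu_\delta(f)| \leq \frac{C}{n(1-e^{-\lambda\delta})}\mathbb{E}[V(X_0,V_0)] \leq \frac{C}{n\delta}\mathbb{E}[V(X_0,V_0)]$. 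Squaring and bounding $\mathbb{E}[V]^2\leq \mathbb{E}[V^2]=\mathbb{E}[e^{2b|V_0|^2}]$ via Jensen accounts for the $\delta^4+(n\delta)^{-2}$ contribution.

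For the variance, I would expand $\mathrm{Var}(S_n) = \frac{1}{n^2}\sum_{k,l=1}^n\mathrm{Cov}(f(X_k),f(X_l))$. For $l\leq k$, by the tower property, $\mathrm{Cov}(f(X_k),f(X_l)) = \mathbb{E}[(f(X_l)-\mathbb{E}f(X_l))(Q^{k-l}f(X_l) - \mathbb{E}f(X_k))]$, which by Theorem~\ref{thm:BJAOAJBergodic} is bounded by $C\|f\|_V^2 e^{-\lambda(k-l)\delta}\mathbb{E}[V(X_l)^2]$, after estimating $|f-\mu_\delta(f)|$ by a multiple of $V$ using that $\mu_\delta(V)$ is uniformly bounded. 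The condition $b<1/4$ is precisely what allows me to apply Theorem~\ref{thm:BJAOAJBergodic} a second time with the enlarged Lyapunov function $V^2=e^{2b|v|^2}$ (valid since $2b<1/2$), yielding $\mathbb{E}[V^2(X_l)]\leq C\mathbb{E}[e^{2b|V_0|^2}]$ uniformly in $l$. The geometric double sum $\sum_{k,l} e^{-\lambda|k-l|\delta} \leq Cn/\delta$ then produces a contribution of order $(n\delta)^{-1}\mathbb{E}[e^{2b|V_0|^2}]$, absorbed into the stated bound.

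For the Richardson-extrapolated estimator, the key observation is that the coefficient $c_f$ in Theorem~\ref{thm:TalayTubaro} depends only on $f$ (not on $\delta$), so that $\frac{4}{3}\mu_{\delta/2}(f) - \frac{1}{3}\mu_\delta(f) = \mu(f) + O(\delta^3)$. The bias analysis is then identical, except that the $O(\delta^2)$ leading terms cancel, producing an $O(\delta^6)$ bias-squared contribution. The variance of $\frac{4}{3n}\sum f(\widetilde X_k) - \frac{1}{3n}\sum f(X_k)$ is, by independence of the two chains, a sum of two variances handled exactly as above, yielding the $(n\delta)^{-1}(\mathbb{E}[e^{2b|V_0|^2}]+\mathbb{E}[e^{2b|\widetilde V_0|^2}])$ term. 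The main obstacle is the verification that $V^2$ is a valid Lyapunov function for the scheme, so that Theorem~\ref{thm:BJAOAJBergodic} may be reapplied to control $\mathbb{E}[V^2(X_l)]$ uniformly in $l$; this is where the constraint $b<1/4$ enters. Everything else reduces to routine geometric summation, Jensen's inequality, and direct invocation of the two main theorems.
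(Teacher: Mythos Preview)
Your approach is essentially the paper's: both control the quadratic risk through the exponential $V$-norm ergodicity of Theorem~\ref{thm:BJAOAJBergodic}, the Lyapunov bound of Proposition~\ref{prop:lyapunovscheme} applied with exponent $2b<1/2$ (whence the restriction $b<1/4$), and, for the Richardson part, the cancellation of the leading $c_f\delta^2$ terms from Theorem~\ref{thm:TalayTubaro}. The only organisational difference is that the paper does not split bias and variance but expands $\mathbb{E}\bigl[|S_n-\mu(f)|^2\bigr]$ directly as the double sum $\tfrac{1}{n^2}\sum_{j,k}\nu_0 Q^{j}\bigl(g\,Q^{|k-j|}g\bigr)$ with $g=f-\mu(f)$, bounding the diagonal terms by the Lyapunov estimate for $V_b^2$ and the off-diagonal terms by $\|Q^m g\|_{V_b}\leqslant Ce^{-\lambda m\delta}$; your decomposition is this same computation reordered.

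One point to flag: the variance contribution you derive is of order $(n\delta)^{-1}\mathbb{E}[e^{2b|V_0|^2}]$, which is \emph{not} dominated by the $(n\delta)^{-2}$ term appearing in the first displayed inequality, so the phrase ``absorbed into the stated bound'' is not literally correct. The paper's own computation produces the very same $(n\delta)^{-1}$ contribution (from the constant part of the Lyapunov inequality after the geometric summations), so this is not a divergence between your argument and the paper's but rather a mismatch with the exponent $2$ in the statement---consistent with the $(n\delta)^{-1}$ that does appear in the Richardson bound.
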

 Here we state a simple bound but it is easily checked in the proof that in fact the dependency in the initial condition disappears exponentially fast with $n\delta$, i.e. in the bounds above, $\mathbb E ( e^{2b|V_0|^2})$  can be replaced by $1+ e^{-cn\delta} \mathbb E ( e^{2b|V_0|^2})$ for some $c>0$, and similarly with $\widetilde V_0$.

\new{\subsection{Discussion and applications}

\paragraph{Decomposition of the forces.} Following the construction of the jump mechanism presented in~\cite{MRZ}, the velocity jump Langevin process could be defined for any decomposition of $\na U_1$ into arbitrary vector fields $F_i$. In the present work, the choice of decomposing $\na U_1$ along the canonical basis of $\R^d$ was made to avoid singular terms of the form $\nabla U_1/|\nabla U_1|$ in the hypocoercivity computations. However, each of the $F_i$ in~\eqref{eq:decomposition} could be further decomposed along the vector $e_i$, and all the results presented in the previous sections would still hold, as the proofs would lead to exactly the same computations.

To give an example, consider for instance the following classical functional form
\[ U = U_0 + \sum_{1\leqslant i<j\leqslant d}g(|x_i-x_j|)\,,\]
where $U_0$ gathers short-range, computationally inexpensive terms, to be treated by the Langevin part of the process, and $U_1 = U-U_0$ is a sum of pairwise long-range terms (for instance, in molecular dynamics, long-range electrostatic and van der Waals interactions), with $g:\R\rightarrow\R$. Here, the term ``long-range'' refers to the fact that we suppose that there exists a cutoff radius $r_c>0$ such that $g(r)=0$ for all $r<r_c$, which implies that there are no singularities when $x_i=x_j$ for any $i,j$. Following the decomposition~\eqref{eq:decomposition}, for all $i\in\cco 1,d\ccf$, $F_i(x) = \sum_{j\neq i}\partial_i g(|x_i-x_j|)e_i$. We can then define $F_{ij} = \partial_i g(|x_i-x_j|)e_i$ (namely the force of the particle $j$ acting on the particle $i$), and associate a jump mechanism to each $F_{ij}$.

This type of decomposition was used in the implementations of the method for MD applications presented in~\cite{GouraudNumeric,Weisman}, except that the splitting was done at the level of the three-dimensional atoms instead of individual coordinates.}

\new{
\paragraph{Conditions on $U$.} In general, when using piecewise deterministic samplers such as BPS or Zig-Zag, it is not necessary to assume that $\na U$ is bounded: jump times can be sampled with thinning using a time-inhomogeneous bound (for instance, if $\na^2 U$ is bounded, we can bound $|\na U(x+tv)| \leqslant |\na U(x)|+ t \|\na^2 U\|_\infty |v|$). We could do the same here but this is less convenient because, between jumps, the process follows a random Langevin diffusion and thus we cannot have an almost sure bound on $|\na U_1(x_t)|$. Regardless, this is not our cases of interest, which are the practical settings of \cite{Weisman,GouraudNumeric} in MD simulations. In this situation, $U$ has some singular parts (short-range forces) and thus, even if it is  possible in principle to use thinning, it is not efficient. The point made in \cite{Weisman,GouraudNumeric} is precisely that thinning is efficient for bounded forces, so that in these works $\na U_1$ only gathers long-range interaction forces, which vanish at infinity. This is why,  in Assumption~\ref{assu}, we assume $\na U_1$ to be bounded. In our theoretical study, this condition is used in the proof of Lemma~\ref{lem:commutator-jump}.}

\new{Note however that when $\mathcal  X=\T^d$, the condition on $U$ in Assumption~\ref{assu} is simply that $U_0$ and $U_1$ are smooth (taking $\mathcal K=\T^d$).}

\new{
\paragraph{Choice of $\Psi$.}

The condition~\eqref{eq:Psi} on $\Psi$ is here to ensure the invariance of $\mu$. The usual choice, used in the Zig-Zag or BPS samplers, and in the practical implementations in~\cite{GouraudNumeric,Weisman} is $\Psi(s) = \max(0,s)$, which indeed satisfies~\eqref{eq:Psi}. However, the computations in Sobolev spaces in our proofs require that the generator has smooth coefficients, which prevents such a $\Psi$. An admissible example is given by $\Psi(s) = a\log(\exp(s/a)+1)$ for any $a>0$.}

\new{
\paragraph{Role of $\rho$.}

In the jump mechanism, the case $\rho=-1$ corresponds to the Bouncy Particle Sampler, namely $\lambda_i(x,v) = \Psi(\partial_i U_1(x) v_i)$, with deterministic jumps: $q_i(x,v,\dd v') = \delta_{(x,-v_i)}$. This version, introduced and implemented in~\cite{Weisman} is the most natural one and simplest to implement. Moreover it  yields the least amount of jumps  (since $\lambda_i$ grows with $\rho$), and therefore corresponds to the best computational speedup.

However, as explored in~\cite{GouraudNumeric}, choosing the BPS as a jump mechanism has also several issues. First, numerical instabilities may arise in MD applications. Then, although the process samples the Gibbs measure~\eqref{eq:Gibbs-measure}, dynamical properties such as diffusion constants are not well preserved. Indeed, a specificity of MD applications, and contrary to other fields such as Bayesian statistics, is that sampling~\eqref{eq:Gibbs-measure} or its position  marginal (with density proportional to $e^{-\beta U}$) is not the only objective, and dynamical properties such as transition times or transport coefficients are also quantities of interest. In MD, the Langevin diffusion is in fact a model for the time evolution of molecular systems, and it is usually not possible to replace~\eqref{langevin} by some other Markov sampler of the Gibbs measure, like the overdamped Langevin diffusion, Hamiltonian Monte Carlo or random walk Metropolis. Additionally, in that case, in~\eqref{langevin}, the friction parameter $\gamma$ is fixed instead of chosen by the user.

That being said, the generalized jump mechanism~\eqref{eq:lambdaiqi}, first introduced in~\cite{MRZ} as a general class of kinetic jump processes sampling the Gibbs measure (of which the BPS and Zig-Zag process are particular cases), was integrated to the velocity jump Langevin approach in~\cite{GouraudNumeric}. An interesting property, proved in~\cite[Theorem 3.6]{MRZ}, is that this process can be tuned to be arbitrarily close to the Langevin diffusion in terms of stochastic trajectories: when $\rho\rightarrow 1$, the distribution of the velocity jump Langevin process converges to the Langevin diffusion in the space of \textit{càdlàg} trajectories endowed with the Skorokhod topology. Therefore, the general expression~\eqref{eq:lambdaiqi} can be seen as an interpolation between the BPS and Hamiltonian dynamics associated to $U_1$. As shown numerically in~\cite{GouraudNumeric}, a choice of $\rho$ not too far from $1$ makes the velocity jump Langevin process more suitable to estimate the dynamical statistics, and numerically more stable, with, of course, a trade-off between the accuracy of these dynamical properties and the numerical cost of the simulation, as would be the case with any numerical approximation of~\eqref{langevin}. Indeed, as $\rho$ grows, there are more jumps, therefore more pairwise interactions to compute.

In addition, note that in the present work, we take $\rho\in[-1,1)$ as a constant for simplicity, but it could actually be taken as a function of $x$ or depend on $i\in\cco 1,d\ccf$.}

\new{
\paragraph{Simulation of the jumps.}

As stated in the introduction, when simulating the jump process with generator $\mathcal{L}_J$, jump times can be sampled by using the thinning procedure, provided that a bound on the jump rates $\lambda_i$ is available. In our case, since $\Psi'$ and $\na U_1$ are bounded, for each $i\in\cco 1,d\ccf$,
    \begin{align*}
        \lambda_i(x,v) &= \frac{2}{1-\rho}\E\co\Psi\po\frac{\partial_i U_1(x))}{2}(v_i-V_i^i)\pf\cf \\ &\leqslant \frac{1}{1-\rho}(2\Psi(0)+||\Psi'||_\infty||\partial_i U_1||_\infty\E[|v_i-V_i^i|]) \\ &\leqslant
        \frac{1}{1-\rho}\po 2\Psi(0)+ ||\Psi'||_\infty||\partial_i U_1||_\infty\po(1-\rho)|v_i|+\sqrt{\frac{2(1-\rho^2)}{\pi}}\pf\pf=: \overline{\lambda_i}(|v_i|).
    \end{align*}
Note that since this bound depends on $|v_i|$, the value of $\overline{\lambda_i}$ changes after each jump, except if $\rho=-1$, where $v_i$ becomes $-v_i$ after a jump of type $i$. In this case, using the properties of exponential distributions, the simulation of $\mathcal{L}_J$ during a time $\delta$ using thinning is simpler: starting from $(x,v)$,
\begin{enumerate}
    \item Draw $M\sim \text{Poisson}(\delta\sum_{i}\overline{\lambda_i}(|v_i|))$, the total number of jump proposals.
    \item For each $m\in\cco 1,M\ccf$, draw $I_m\in\cco 1,d\ccf$ such that $\PP(I_m=i)=\overline{\lambda_i}/\sum_{j=1}^d \overline{\lambda_j}$ and with probability $\lambda_{I_m}/\overline{\lambda I_m}$, change the sign of $v_{I_m}$.
\end{enumerate}
If $\rho>-1$, when a jump of type $i$ occurs, $v_i$ becomes
\[ \rho v_i +\sqrt{1-\rho^2}\tilde{\xi}\,,\]
where $\tilde{\xi}$ is a random variable with density proportional to
\[ f_{(x,v)}(y) = \Psi\po\frac{\partial_i U_1(x)}{2}\po(1-\rho)v_i-\sqrt{1-\rho^2}y)\pf\pf e^{-y^2/2}\,. \]
It is shown in~\cite[Section 5.3]{MRZ}, and implemented in~\cite{GouraudNumeric} in the context of the velocity jump Langevin method, that this random variable can be simulated exactly with rejection sampling, by using either Gamma, exponential, shifted Rayleigh or Gaussian distributions as proposals, depending on the values of $(x,v)$. 
}

\new{

\paragraph{Complexity of the algorithm.}

Let us discuss the efficiency of the procedure. In a classical discretization scheme of the Langevin diffusion, all the $d$ derivatives of $U_1$ are computed at each time step, whereas in the BJAOAJB scheme, there are as many computations of terms $\partial_i U_1$ as jump proposals. As we saw, when $\rho=-1$, there are in average $\delta\sum_{i=1}^d\overline{\lambda_i}(|v_i|)$ such proposals per time step. At equilibrium, the velocity $v$ is distributed according to a standard Gaussian, which implies that on average,
\[\E_\mu[\overline{\lambda_i}] = \Psi(0)+\sqrt{\frac{2}{\pi}}||\Psi'||_\infty||\partial_i U_1||_\infty\,.\]
Therefore, for $\rho=-1$, we expect the simulation of the velocity jump Langevin using thinning to yield  a computational advantage if 
\begin{equation}\label{eq:advantage}
    \delta \po \Psi(0)+\sqrt{\frac{2}{\pi}}||\Psi'||_\infty||\na U_1||_\infty \pf < 1\,.
\end{equation}
When $\rho$ increases, the average number number of jumps increases, and goes to infinity when $\rho\rightarrow 1$, therefore computational advantage is only to be expected if $\rho$ is not too close to $1$. We provide some numerical values in the next paragraph  in the case of MD simulations.

\paragraph{Application to molecular dynamics.}

In MD simulations, the inequality~\eqref{eq:advantage} is easily satisfied since the commonly used time step is extremely small ($\delta\approx 10^{-15}$ seconds), a choice imposed by the fastest vibrational modes of molecular systems. In~\cite{GouraudNumeric,Weisman}, the part of the interaction potential to be treated by the jump process is constituted of the long-range electrostatic and van der Waals forces, namely
\[ U_1(x) =\sum_{1\leqslant i < j \leqslant N}\chi(r_{ij})\po \veps\co\po\frac{\sigma}{r_{ij}}\pf^{12}-\po\frac{\sigma}{r_{ij}}\pf^6\cf+\frac{q_iq_j}{r_{ij}}\pf\,,\]
where $N$ is the number of the atoms in the system, $r_{ij}$ is the distance between atoms $i$ and $j$, $q_i$ is the electric charge of the atom $i$, $\sigma$ and $\veps$ are parameters of the Lennard-Jones potential, and $\chi$ is a smooth switching function such that $\chi(r)=0$ if $r<r_c$, with $r_c$ a given cutoff radius.

In a box of water molecules with the classical SPC/Fw model, by choosing $r_c = 7$ angstroms as a cutoff, the bound on the derivatives of $U_1$ is of the order of $10^{-2}$ kcal.mol$^{-1}$.Å$^{-1}$, and if we consider $\Psi(s) = \log(\exp(s)+1)$, $\Psi(0) = \log(2)$, $\Psi'<1$, which implies that~\eqref{eq:advantage} is largely satisfied, with $\delta=10^{-3}$ps in standard MD units, the left hand side of~\eqref{eq:advantage} being of order $10^{-3}$. We refer to~\cite[Section 4.C]{Weisman} for further theoretical discussion on the speedup provided by the procedure in the Bouncy particle sampler case, when simulating boxes of water molecules. 

When $\rho>-1$, the average number of jumps increases. However, as shown extensively in~\cite{GouraudNumeric} on molecular dynamics simulations, a significant speedup is provided, as long as $\rho$ is not too close to $1$, for several reasons. First, thanks to the extremely small time step, the average number of jump proposals per time step remains significantly smaller than the total number of pairwise interactions. Second, the jump procedure is easily parallelizable over all atoms, which make it suitable for parallel architectures such as GPUs. In particular, jumps on pairwise interactions scale better than certain many-body interactions such as the Ewald sum appearing the reciprocal electrostatic force (classicaly used to compute long range electrostatic interactions, see for instance~\cite{spme}), and replacing a part of this sum by jumps on  pairwise direct electrostatic forces yields a better speedup. Finally, by combining the jump procedure with classical multi-time-step methods, the intrinsically random nature of jumps allows to mitigate some well-known resonance issues of multi-time-stepping, which allows to choose a slightly larger time step. 
In~\cite{GouraudNumeric}, the choice $\rho=0$ seems to provide a good compromise between speedup and preservation of dynamical properties, leading for instance to a total acceleration of $19.3\%$ in simulation speed compared to a state-of-art multi-time-step discretization of Langevin dynamics for a system of 96000 water molecules.}

\paragraph{Numerical illustrations.}

Let us give a numerical illustration of the algorithm with various parameters in the case of two-dimensional Gaussian distributions. We take $d=2$,
\[U(x,y) = \frac{x^2+ 11 y^2}{2}\,, \]
and consider the following splitting
\[U_0(x,y) = \frac{x^2+y^2}{2}\,,\quad U_1(x,y) = 5 y^2\,,\]
which means that the Langevin dynamics $\mathcal{L}_H+\mathcal{L}_D$ samples the standard Gaussian distribution, while the jumps $\mathcal{L}_J$ bring asymmetry to the process.
We fix a time step ($\delta = 0.01$) and a time of simulation ($n=10^4$ time steps) and show in Figure~\ref{fig:1} how the jump parameter $\rho$ and the friction $\gamma$ influence the trajectories. When the friction is very small (in Figure~\ref{fig:1} (a) and (b)), the Langevin part resembles the Hamiltonian dynamics associated to $U_0$. In that case, if $\rho$ is close to $1$, the process is close to the Hamiltonian dynamics sampling $U$. If $\rho=-1$, the jump process corresponds to the BPS, there are fewer jumps, but there are more visible since they tend to change the direction of the trajectory to bring asymmetry. When the friction is high (in Figure~\ref{fig:1} (e) and (f)), as expected, both trajectories (with either $\rho=1$ or $\rho$ close to $1)$ resemble an overdamped Langevin diffusion process. The intermediate case (in Figure~\ref{fig:1} (c) and (d)) correspond to a mix between those extreme situations, with noisy but ballistic dynamics.

For a thorough empirical study of the efficiency of the algorithm on MD applications, we refer to the previous~\cite{Weisman} (restricted to the case $\rho=-1$) or the companion paper~\cite{GouraudNumeric} in the general case.
\begin{figure*}[htbp]
\begin{center}
    \subfloat[$\gamma = 10^{-4}$, $\rho=-1$]{\includegraphics[width=0.5\textwidth]{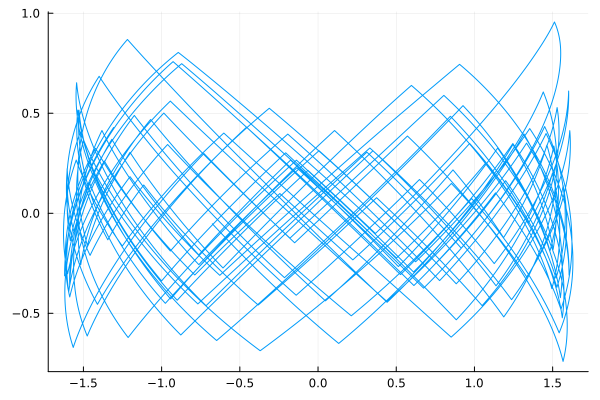}}
    \subfloat[$\gamma = 10^{-4}$, $\rho=1-10^{-4}$]{\includegraphics[width=0.5\textwidth]{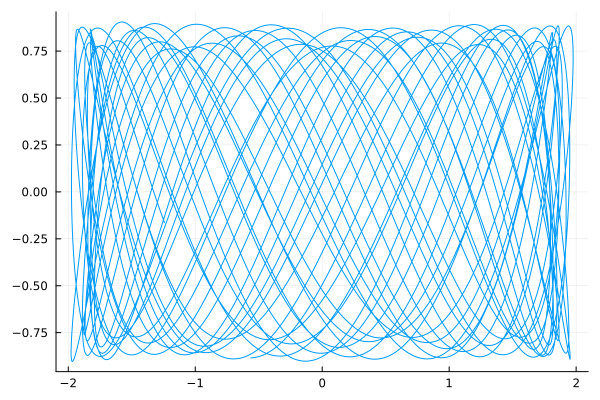}} \\
    \subfloat[$\gamma = 0.1$, $\rho = -1$]{\includegraphics[width=0.5\textwidth]{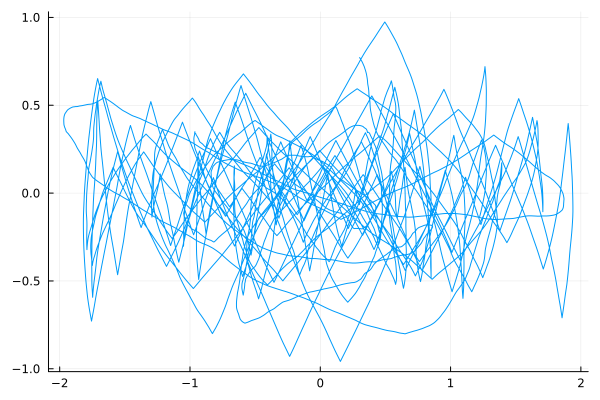}}
    \subfloat[$\gamma = 0.1$, $\rho=1-10^{-4}$]{\includegraphics[width=0.5\textwidth]{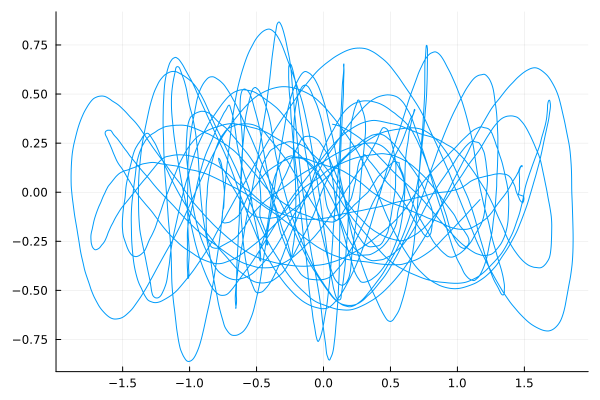}} \\
    \subfloat[$\gamma = 100$, $\rho = -1$]{\includegraphics[width=0.5\textwidth]{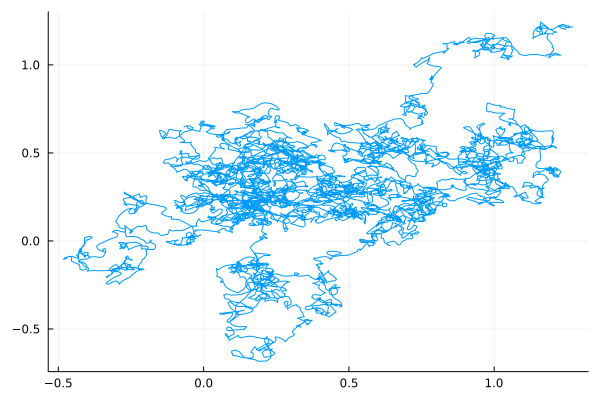}}
    \subfloat[$\gamma = 100$, $\rho=1-10^{-4}$]{\includegraphics[width=0.5\textwidth]{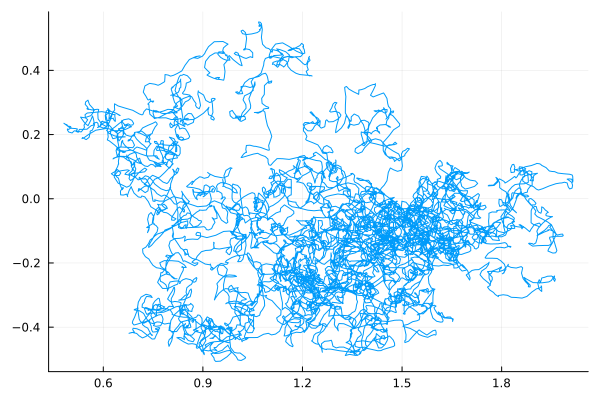}}
\end{center}
\caption{Trajectories of a splitting scheme for the velocity jump Langevin process}\label{fig:1}
\end{figure*}

\section{Geometric ergodicity}
\label{sec:ergodicity}

This section is devoted to the proof of Theorem~\ref{thm:estimates}, which relies on hypocoercive computations \textit{à la Villani} in Sobolev spaces. We only consider the case $\mathcal X=\R^d$, the proof being  easily adapted and simpler in the compact periodic case.

\subsection{Proof of Theorem~\ref{thm:estimates}}\label{s-sec:proof-thm-1}

In this section, we give the key steps of the proof, postponing the proofs of the main intermediary results to the rest of Section~\ref{sec:ergodicity}.

\emph{Step 1. Sobolev hypocoercivity.} Fix $k\in\N$ and define the Sobolev space of order $k$ by
\[
H^k(\mu) = \left\{f:\R^{2d}\to\R\, \text{ measurable}, \|f\|_{H^k}^2:=\sum_{i + j \leqslant k}\int_{\R^{2d}} |\na_x^i\na_v^j f|^2 \dd \mu <\infty \right\},
\]
where we denoted
\[
|\nabla_x^i\nabla_v^j f|^2 = \sum_{|\alpha_1|=i;|\alpha_2|=j} |\partial_x^{\alpha_1}\partial_v^{\alpha_2}f|^2\,.
\]
We will first establish the exponential convergence of $P_t$ to $\mu$ in $H^k$. The velocity jump Langevin process is hypocoercive, in the sense that the generator of this process does not satisfy
\[
\left< f,\mathcal Lf \right>_{H^k} \leqslant -\rho \|f\|_{H^k}^2,\, \forall f\in H^k,
\]
for any $\rho>0$, but such that there exists a norm equivalent to $\|\cdot\|_{H^k}$ such that the previous inequality holds. To prove exponential convergence to equilibrium, following~\cite{Journel1,Journel2,Zhang}, for $f\in H^k(\mu)$, we introduce  
\begin{equation}\label{eq:modified-norm}
\new{N}_k(f) = \int_{\R^{2d}} f^2 \dd\mu + \sum_{p=1}^k \int_{\R^{2d}}\po\sum_{i=0}^{p-1}\omega_{i,p}|\nabla_x^i\nabla_{v}^{p-i} f|^2 + \omega_{p,p}|(\nabla_x^p-\nabla_x^{p-1}\nabla_v)f|^2\pf\dd\mu\,,
\end{equation}
where $(\omega_{i,p})_{p\in\N, 1\leqslant i \leqslant p}$ are positive numbers, which is  equivalent to $\|f\|_{H^k}^2$. We refer to $\new{N}_k$ as the modified Sobolev norm (although strictly speaking  it is the square of a norm). With computations similar to~\cite{villani} (although rather presented in terms of Gamma calculus as in \cite{gammacalculus}), and proceeding by induction in $k$, our goal is to design $(\omega_{i,p})_{p\in\N, 1\leqslant i \leqslant p}$ in such a way that, at least informally,
\[
\partial_t \new{N}_k\po P_tf - \mu(f) \pf \leqslant -\rho_k \new{N}_k\po P_tf - \mu(f)\pf\,,
\]
for all $f\in H^k$ for some $\rho_k>0$. To make rigorous this argument, instead of justifying the time differentiation, we will rely on Lumer-Philips theorem~\cite[Chapter IX, p.250]{Func_anal}, see in Section~\ref{s-sec:Hk-conv}. This leads to the following:

\begin{thm}\label{thm:conv-Hk}
There exist positive $(\omega_{i,p})_{p\in\N, 1\leqslant i \leqslant p}$ and $(\rho_k)_{k\in\N^*}$ such that for all $k\in\N^*$, $f\in H^k(\mu)$ and $t\geqslant 0$,
\[\new{N}_k(P_tf - \mu(f))\leqslant e^{-\rho_k t}\new{N}_k(f-\mu(f))\,.\]
\end{thm}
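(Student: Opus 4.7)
The strategy is to run Villani's $H^1$ hypocoercivity method in higher-order Sobolev spaces by induction on $k$, while adapting it to the non-local operator $\mathcal{L}_J$. The target is, informally,
\[
\frac{d}{dt} N_k(P_t f - \mu(f)) \leqslant -\rho_k N_k(P_t f - \mu(f))\,,
\]
after which Gr\"onwall's lemma yields the result. Since $P_t$ does not instantaneously smooth $H^k$ data, direct time-differentiation is replaced, as announced, by the Lumer-Phillips theorem: viewing $\mathcal{L} + \rho_k$ as a dissipative operator on the Hilbert space associated with $N_k$ yields the contraction of $e^{\rho_k t} P_t$ in $N_k$.

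Dissipation is computed term by term via the splitting $\mathcal{L} = \mathcal{L}_H + \mathcal{L}_D + \mathcal{L}_J$ and the commutator calculus. Since $\mathcal{L}_H$ is antisymmetric and $\mathcal{L}_J$ is $\mu$-symmetric with nonpositive spectrum, only $\mathcal{L}_D$ produces an $L^2$ dissipation bounded below by $\gamma \|\nabla_v f\|^2_{L^2(\mu)}$; combined with $[\nabla_v,\mathcal{L}_D] = -\gamma \nabla_v$ and $[\nabla_x,\mathcal{L}_D] = 0$, this also supplies a $\nabla_v$-dissipation of each mixed piece $|\nabla_x^i\nabla_v^{p-i}f|^2$. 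The Hamiltonian commutators $[\nabla_v,\mathcal{L}_H] = \nabla_x$ and $[\nabla_x,\mathcal{L}_H] = -(\nabla^2 U_0)\nabla_v$ produce cross terms: the first is precisely what the twisted pieces $|(\nabla_x^p - \nabla_x^{p-1}\nabla_v)f|^2$ are designed to exploit, their time derivative contributing the otherwise missing $\nabla_x^p$-coercivity; the second is harmlessly controlled by the boundedness of $\nabla^2 U_0$ from Assumption~\ref{assu}.

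For the base case $k=1$, this reduces to the classical Villani $H^1$ computation plus jump perturbations: one chooses $\omega_{0,1}, \omega_{1,1}>0$ small enough, in that order, so that the positive dissipations $\gamma\int|\nabla_v^2 f|^2\,d\mu$, $\omega_{1,1}\int|\nabla_x f|^2\,d\mu$ and $\gamma\int|\nabla_v f|^2\,d\mu$ absorb every cross and commutator error. The induction step proceeds by the same mechanism at each Sobolev order: the top-order twisted term $|(\nabla_x^k - \nabla_x^{k-1}\nabla_v)f|^2$ produces $\nabla_x^k$-coercivity, each mixed derivative receives $\nabla_v$-dissipation from $\mathcal{L}_D$, and all residues of strictly lower order are controlled by the induction hypothesis on $N_{k-1}$. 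The weights $\omega_{i,p}$ are selected hierarchically, each much smaller than those of the preceding order, so that error terms cascade downward and are successively absorbed.

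The main obstacle, distinguishing this proof from the classical Villani program, is the non-local commutator $[\nabla_x^\alpha\nabla_v^\beta,\mathcal{L}_J]$, which is an integral operator rather than a pointwise differential expression and hence cannot be rewritten as a finite combination of derivatives of $f$. What must be established is an $L^2(\mu)$ bound on this commutator by Sobolev norms of $f$ of order at most $p$, namely the higher-order analogue of Lemma~\ref{lem:commutator-jump}. The smoothness of the rates $\lambda_i$ and kernels $q_i$---itself a consequence of the boundedness of all derivatives of $U_1$ (Assumption~\ref{assu}) and of $\Psi$---is what makes this bound possible. Once in hand, the resulting jump error terms are absorbed into the $L^2$ dissipation coming from the diffusive part, which, as the introduction emphasizes, is the crucial mechanism that Wasserstein-based arguments for piecewise deterministic processes would miss.
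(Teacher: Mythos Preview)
Your overall architecture matches the paper's: induction on $k$, commutator calculus for each piece of $\mathcal L$, the twisted blocks $|(\nabla_x^p-\nabla_x^{p-1}\nabla_v)f|^2$ to recover $\nabla_x$-coercivity, and Lumer--Phillips to make the formal Gr\"onwall rigorous. Two points, however, are not right as stated, and the second one is a genuine gap.

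First, a minor correction: $\mathcal L_J$ is not $\mu$-symmetric (its adjoint in Proposition~\ref{prop:adjoint} carries $\Psi$ evaluated at the opposite argument), and $\mathcal L_H$ is not antisymmetric in $L^2(\mu)$ either, because $\mu$ involves $U=U_0+U_1$ while $\mathcal L_H$ involves only $\nabla U_0$; the discrepancy $v\cdot\nabla U_1$ appears and is cancelled by a matching term in $\mathcal L_J^*$. This does not break the strategy, since all you actually need is the lower bound $\Gamma(h)\geqslant 2\gamma|\nabla_v h|^2$ on the full carr\'e du champ, but the decomposition into ``antisymmetric $+$ symmetric nonpositive'' pieces is not correct here.

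The substantive gap is your claim that the jump commutator $[\nabla_x^\alpha\nabla_v^\beta,\mathcal L_J]$ can be bounded in $L^2(\mu)$ by Sobolev norms of $f$ of order at most $p=|\alpha|+|\beta|$. This is false already at $p=1$: Lemma~\ref{lem:commutator-jump} shows that $\int|[\nabla,\mathcal L_J]h|^2\,d\mu$ is controlled by $\int(|\nabla_v h|^2+|\nabla_v^2 h|^2)\,d\mu$, and the second-order term is unavoidable (it comes from the factor $v_i-V_i^i$ in the rate, handled via Lemma~\ref{villanilemma}). At level $p$ the commutator genuinely produces terms of order $p+1$ in $v$ (see Lemma~\ref{lem:commutator-jump-Hk}). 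These cannot be absorbed by the $L^2$ dissipation $\gamma\|\nabla_v f\|^2$ as you suggest. The paper's mechanism is different: the induction hypothesis (Proposition~\ref{recurrence}) is strengthened to carry an extra buffer term $\int\sum_{i=0}^k|\nabla_x^i\nabla_v^{k+1-i}h|^2\,d\mu$ of one order above $N_k$, and the order-$(k+2)$ contributions from the jump commutator at step $k+1$ are absorbed not by the $L^2$ part but by the carr\'e du champ of the gradient blocks, namely $\Gamma(\nabla_x^i\nabla_v^{k+1-i}h)\geqslant 2\gamma|\nabla_x^i\nabla_v^{k+2-i}h|^2$. Without this augmented induction and the correct absorption mechanism, the hierarchy of weights cannot close.
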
 
This is proven below (from Section~\ref{sec:Gammacalculus} to Section~\ref{s-sec:Hk-conv}).

Thanks to Sobolev embeddings, this already gives a result similar to Theorem~\ref{thm:estimates} except that in the right-hand-side of \eqref{eq:thm_ergodicite}, $b=1$. This would not be enough for establishing Theorem~\ref{thm:TalayTubaro} since $e^{H}$ is not integrable with respect to $\mu$, from which it cannot be a Lyapunov function for $P_t$,  and then neither for the BJAOAJB chain. Hence, we wouldn't be able to control the expectation of the discretization bias in the proof of Theorem~\ref{thm:TalayTubaro}.

\medskip

\emph{Step 2. Lyapunov function and re-weighted Sobolev norms.} To solve this issue, we will work with a particular weighted Sobolev norms. The weight $V_b:\R^{2d}\rightarrow\R$ is defined, for any $a,b>0$, by
\begin{equation}
    \label{eq:defV_b}
    V_b(x,v) = \exp\po b\po U_0(x)+\frac{|v^2|}{2}- a\frac{x\cdot v}{\sqrt{1+|x|^2}}\pf\pf.
\end{equation}

We show in Section~\ref{s-sec:proof-estimates} that it is a Lyapunov function for $\mathcal L^*$, the adjoint operator of $\mathcal L$ in $L^2(\mu)$ (which is computed in Proposition~\ref{prop:adjoint}):

\begin{prop}\label{prop:deflyapunov}
There exist $a>0,b_0\in(0,1)$ such that for all $b\in(0,b_0)$, there exist $\eta>0, C>0$ such that 
\[\mathcal{L}^*V_b\leqslant -\eta V_b + C\,.\]
\end{prop}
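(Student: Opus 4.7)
The plan is to compute $\mathcal{L}^{*}V_b / V_b$ and show that it is bounded above by $-\eta$ outside a compact set for $a$ and $b$ chosen small enough; together with a uniform bound on compact sets this yields the desired $\mathcal{L}^{*}V_b \leqslant -\eta V_b + C$. Writing $V_b = e^{b\phi}$ with $\phi(x,v) = U_0(x) + \tfrac{1}{2}|v|^2 - a\,x\cdot v/\sqrt{1+|x|^2}$, and using Proposition~\ref{prop:adjoint} to decompose $\mathcal{L}^{*} = -\mathcal{L}_H + v\cdot\nabla U_1 + \mathcal{L}_D + \mathcal{L}_J^{*}$, I would treat the Langevin piece and the jump piece separately.

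For the Langevin part, a direct computation expresses $(-\mathcal{L}_H + \mathcal{L}_D)V_b/V_b$ in terms of derivatives of $\phi$, with the dissipation producing the quadratic correction $b^2\gamma|\nabla_v \phi|^2$. The $v\cdot \nabla U_0$ contributions cancel between the transport and the acceleration pieces of $\mathcal{L}_H$, and the cross-term in $\phi$ yields the decisive contribution $-ab\,x\cdot\nabla U_0(x)/\sqrt{1+|x|^2}$, which under Assumption~\ref{assu}(3) is bounded above by $-ab\kappa|x|^2/\sqrt{1+|x|^2}$ outside a compact set, behaving like $-Cab|x|$ at infinity. Since $\nabla_v\phi = v - ax/\sqrt{1+|x|^2}$, the dissipative term reads $b\gamma(d - v\cdot\nabla_v\phi) + b^2\gamma|\nabla_v\phi|^2 = -b\gamma(1-b)|v|^2 + O(1+|v|)$, which is negative and coercive in $v$ whenever $b<1$. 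The additional $v\cdot\nabla U_1$ term coming from the $L^2(\mu)$-duality is bounded by $C|v|$ since $\nabla U_1$ is bounded by Assumption~\ref{assu}(2), hence of lower order.

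The main obstacle is controlling the jump contribution $\mathcal{L}_J^{*}V_b/V_b$. Based on the explicit form of $\mathcal{L}_J$ in \eqref{eq:lambdaiqi} and the adjoint computation from Proposition~\ref{prop:adjoint}, each summand $\mathcal{L}_J^{i,*}$ decomposes as a positive integral term against a Gaussian-type kernel on the $i$-th velocity coordinate (tied to the identity $\Psi(s)-\Psi(-s)=s$) minus a multiplicative rate. Since $V_b$ depends on $v_i$ only through the quadratic term $bv_i^2/2$ and a bounded linear cross-term $-abx_iv_i/\sqrt{1+|x|^2}$, the integrals appearing in $\mathcal{L}_J^{*}V_b(x,v)/V_b(x,v)$ can be evaluated explicitly as Gaussian integrals, provided $b(1-\rho^2)<1$. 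A careful inspection of the resulting exponent shows it is strictly negative in $v_i^2$ whenever $b<1$, so the integrals are uniformly bounded in $(x,v)$; combined with the rate bound $\lambda_i^{*}(x,v)\leqslant C(1+|v|)$ (which follows from the boundedness of $\Psi'$ and $\nabla U_1$), this yields $|\mathcal{L}_J^{*}V_b|/V_b \leqslant C(1+|v|)$, again lower-order than the $-b\gamma(1-b)|v|^2$ provided by $\mathcal{L}_D$.

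Putting the three pieces together, the leading terms in $\mathcal{L}^{*}V_b/V_b$ at infinity are $-ab\kappa|x|^2/\sqrt{1+|x|^2}$ and $-b\gamma(1-b)|v|^2$, both diverging to $-\infty$, and every other term is $O(1+|v|)$. Choosing first $b_0\in(0,1)$ small enough so that $b(1-\rho^2)<1$ and the dissipative $|v|^2$ coercivity dominates the jump perturbation, then $a>0$ small enough so that no cross-interaction spoils the $|x|$-coercivity, yields the Lyapunov bound on all of $\mathcal X\times\R^d$. The delicate step is the explicit handling of the Gaussian integrals in $\mathcal{L}_J^{*}V_b$, which relies crucially on the quadratic structure of $\phi$ in $v$ and on the rescaling by the factor $1-\rho^2$ in the jump kernel.
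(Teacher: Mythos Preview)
Your approach is correct and essentially matches the paper's: decompose $\mathcal{L}^*$ into a Langevin-like piece (yielding the coercive contributions $-C|v|^2$ from the dissipation and $-C|x|$ from the cross-term via Assumption~\ref{assu}(3)) plus the jump piece, which is bounded by $C(1+|v|)V_b$ through explicit Gaussian integration in the $i$-th velocity coordinate. The only point to tighten is the handling of the $\Psi$ factor inside the jump integral---it does not factor off as a separate ``adjoint rate'' $\lambda_i^*$, and the paper uses Cauchy--Schwarz on $\E[(V_b(x,V^i)/V_b-1)\Psi(\cdot)]$, which is why $b$ must be taken small (not merely $b<1$) so that $\E[(V_b(x,V^i)/V_b)^2]$ is also finite.
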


Combining Theorem~\ref{thm:conv-Hk} and Proposition~\ref{prop:deflyapunov}, following computations as in the proof of Theorem~\ref{thm:conv-Hk} with a modified norm similar to \eqref{eq:modified-norm} but with the additional weight $V_b$, we end up with the following:

\begin{prop}\label{prop:conv-lyapunov}
For all $\alpha\in\N^{2d}$ and all $f\in\mathcal A$, there exist $b\in (0,1),\eta>0,C>0$ such that for all $t\geqslant 0$, 
\[\int_{\R^{2d}}|\partial^{\alpha}(P_t f-\mu(f))|^2 V_b\dd\mu \leqslant C e^{-\eta t}\,.\]
\end{prop}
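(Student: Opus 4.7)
The plan is to introduce a weighted analogue of the modified Sobolev norm~\eqref{eq:modified-norm},
\[
\widetilde N_k^b(f) = \int f^2\, V_b\,\dd\mu + \sum_{p=1}^k \int\po\sum_{i=0}^{p-1}\omega_{i,p}\,|\na_x^i\na_v^{p-i}f|^2 + \omega_{p,p}\,|(\na_x^p-\na_x^{p-1}\na_v)f|^2\pf V_b\,\dd\mu,
\]
and to prove exponential decay of $\widetilde N_k^b(P_tf-\mu(f))$ for $k=|\alpha|$ and $b$ small enough (depending on $f$). Since $\widetilde N_k^b$ dominates $\int|\partial^\alpha f|^2 V_b\,\dd\mu$ for any $|\alpha|\leqslant k$, this yields the proposition. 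Provided $b<b_0$ (from Proposition~\ref{prop:deflyapunov}) and $b$ is taken small enough relative to the growth constant $c<1$ attached to $f\in\mathcal A$, the initial value $\widetilde N_k^b(f-\mu(f))$ is finite. The strategy is to proceed by induction on $k$, reproducing inside the weighted space the hypocoercive computations established for Theorem~\ref{thm:conv-Hk}.

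\medskip

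For the base case $k=0$, set $f_t:=P_tf-\mu(f)$ and use the carré du champ identity $\mathcal L(f_t^2)=2f_t\mathcal L f_t + 2\Gamma(f_t,f_t)$, where
\[
\Gamma(f,f) \;=\; \gamma|\na_v f|^2 \;+\; \tfrac12\sum_{i=1}^d\lambda_i\int(f(\cdot,v')-f(\cdot,v))^2\,q_i(\cdot,\cdot,\dd v') \;\geqslant\; 0.
\]
Integrating against $V_b\,\dd\mu$ and applying the adjoint identity $\int\mathcal L(f_t^2)V_b\,\dd\mu=\int f_t^2\,\mathcal L^\ast V_b\,\dd\mu$ together with Proposition~\ref{prop:deflyapunov} yields
\[
\frac{\dd}{\dd t}\int f_t^2\,V_b\,\dd\mu \;\leqslant\; -\eta\int f_t^2\,V_b\,\dd\mu + C\int f_t^2\,\dd\mu .
\]
Since Theorem~\ref{thm:conv-Hk} gives exponential decay of $\int f_t^2\,\dd\mu$, Gronwall's lemma concludes.

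\medskip

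For the inductive step, I would differentiate $\widetilde N_k^b(f_t)$ in time and reproduce the integration-by-parts and commutator computations that underpin Theorem~\ref{thm:conv-Hk}. The new contributions arising from the weight $V_b$ fall into two families. First, diagonal terms of the form $\int|\na_x^i\na_v^{p-i}f_t|^2\,\mathcal L^\ast V_b\,\dd\mu$: by Proposition~\ref{prop:deflyapunov} each such term produces a dissipative $-\eta\int|\na_x^i\na_v^{p-i}f_t|^2V_b\,\dd\mu$ contribution, plus a remainder $C\int|\na_x^i\na_v^{p-i}f_t|^2\,\dd\mu$ that is controlled inductively via Theorem~\ref{thm:conv-Hk}. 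Second, lower-order cross terms involving $\na V_b$, generated when integrating $\mathcal L_H$ and $\mathcal L_D$ by parts against $V_b\,\dd\mu$: these are of strictly lower derivative order than the dissipative contributions and can be absorbed by Cauchy--Schwarz upon choosing the weights $\omega_{i,p}$ and $b$ sufficiently small. The resulting inequality has the form $\frac{\dd}{\dd t}\widetilde N_k^b(f_t)\leqslant -\rho\,\widetilde N_k^b(f_t)+(\text{exponentially decaying})$, and Gronwall closes the induction.

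\medskip

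The main obstacle is the treatment of the non-local jump operator $\mathcal L_J$ in the weighted space: unlike a differential operator, $\mathcal L_J$ does not commute cleanly with $\na_x,\na_v$ and its carré du champ mixes values of $V_b$ at $(x,v)$ and at $(x,v')$, with $v'$ drawn from $q_i(x,v,\dd v')$. Because each $\mathcal L_J^i$ alters a single coordinate of $v$ through a Gaussian convolution and $V_b$ has essentially Gaussian growth in $v$, the ratio $V_b(x,v')/V_b(x,v)$ is uniformly integrable against $q_i(x,v,\dd v')$ provided $b$ is small, and the boundedness of the derivatives of $\Psi$ and of $\na U_1$ allows to control the additional factors $\lambda_i$ and its derivatives. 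This lifts the unweighted bounds on $[\na,\mathcal L_J]$ from the proof of Theorem~\ref{thm:conv-Hk} to the weighted framework with merely modified constants, completing the induction.
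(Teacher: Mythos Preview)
Your overall framework and the base case $k=0$ coincide with the paper's proof. The substantive difference is how the inductive step handles the non-local operator $\mathcal L_J$.

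The paper does \emph{not} attempt to reproduce the hypocoercive commutator estimates for $\mathcal L_J$ in the weighted space. Instead it splits $\mathcal L=\mathcal L_0+\mathcal L_J$ with $\mathcal L_0=\mathcal L_H+\mathcal L_D$ and exploits a fact you overlook: the commutator bounds of Lemmas~\ref{lem:commutator-hamiltonian} and~\ref{lem:commutators-diffusion} are \emph{pointwise}, so with the weights $\omega_{i,p}$ already fixed in Proposition~\ref{recurrence} one has $\Gamma_{\mathcal L_0,\phi_k}\geqslant 0$ pointwise and the term $-2\int\Gamma_{\mathcal L_0,\phi_k}(f_t)V_b\,\dd\mu$ is simply dropped. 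The jump contribution $\int D_h\phi_k(f_t)\,\mathcal L_J f_t\,V_b\,\dd\mu$ is then treated by integrating by parts in $(x,v)$ to move all derivatives off $\mathcal L_J f_t$ onto $V_be^{-H}$, followed by Cauchy--Schwarz, yielding a bound of the form $C\sqrt{N_{2|\alpha|}(f_t)}\sqrt{\int|\mathcal L_J f_t|^2 V_{b'}\dd\mu}$. The first factor decays by Theorem~\ref{thm:conv-Hk} (at the higher order $2|\alpha|$), and the second is reduced to $\int f_t^2 V_{b''}\dd\mu$ via a direct estimate on $\mathcal L_J$ and the reversibility of the jump kernel, which decays by the $k=0$ case. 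This completely sidesteps any weighted commutator analysis for $\mathcal L_J$.

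Your proposal to lift the $[\nabla,\mathcal L_J]$ bounds to the weighted space is plausible but under-justified. The unweighted bounds (Lemma~\ref{lem:commutator-jump} and Lemma~\ref{lem:commutator-jump-Hk}) rely crucially on the Gaussian Poincar\'e inequality~\eqref{poincaregauss} to convert $\int(h-\Pi_{v_i}h)^2\dd\mu$ into $\int|\partial_{v_i}h|^2\dd\mu$, together with Lemma~\ref{lem:generalized-vil}. In the weighted version, the projection $\Pi_{v_i}$ is taken with respect to the standard Gaussian while the integration is against $V_b\,\dd\mu$, whose $v_i$-marginal is a shifted Gaussian; the mismatch is not addressed by the observation that $V_b(x,v')/V_b(x,v)$ is integrable against $q_i$. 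One can likely push this through (the shifted Gaussian still satisfies Poincar\'e, and the difference of means is uniformly bounded), but it requires a genuine argument rather than ``merely modified constants''. Your ``second family of cross terms involving $\nabla V_b$'' is also imprecise: in the carr\'e-du-champ identity these are already packaged into the term $\int\phi_k(f_t)\,\mathcal L^\ast V_b\,\dd\mu$, so there is nothing extra to absorb from $\mathcal L_H$ and $\mathcal L_D$.

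In short: the paper's route is cleaner because it bootstraps from the unweighted decay at order $2|\alpha|$ and the weighted $L^2$ decay, avoiding any weighted jump estimates; your route is more self-contained in spirit but the jump part needs real work that is not sketched.
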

The proof is postponed to Section~\ref{s-sec:proof-estimates}.

\medskip

\emph{Step 3. Conclusion.} Using Step~1 and~2, we prove Theorem \ref{thm:estimates}:

\begin{proof}[Proof of Theorem \ref{thm:estimates}]
Let $f\in\mathcal{A}$ and $\alpha\in\N^{2d}$. Without loss of generality we assume that $\mu (f) = 0$. Let $b'\in(0,1)$ such that the result of Proposition~\ref{prop:conv-lyapunov} holds for $V_{b'}$, i.e.
\[\int_{\R^{2d}}|\partial^{\nu}P_t f|^2 V_{b'}\dd\mu \leqslant C e^{-\eta t}\,,\]
for all $\nu\leqslant\alpha$, and let $b\in(0,1)$ and $\veps>0$ such that $b'>1-(1-\veps)b$. We then have
\begin{align*}
    \int_{\R^{2d}}|\partial^{\alpha}(P_t f e^{-bH})|^2 &\leqslant C\sum_{\nu\leqslant \alpha}\int_{\R^{2d}}|\partial^\nu P_t f|^2|\partial^{\alpha-\nu}e^{-bH}|^2 \\
    &\leqslant C\sum_{\nu\leqslant \alpha}\int_{\R^{2d}}|\partial^\nu P_t f|^2 e^{-(1-\veps)bH} \\
    &= C\sum_{\nu\leqslant \alpha}\int_{\R^{2d}}|\partial^\nu P_t f|^2 e^{(1-(1-\veps)b)H}\dd\mu \\
    &\leqslant C\sum_{\nu\leqslant \alpha}\int_{\R^{2d}}|\partial^\nu P_t f|^2 V_{b'}\dd\mu \\
    &\leqslant Ce^{-\rho t}\,,
\end{align*}
for some $\rho>0$. We conclude with Sobolev embedding.

\end{proof}

\subsection{Preliminary: Gamma calculus}\label{sec:Gammacalculus}

The proof of Theorem~\ref{thm:conv-Hk} relies on an induction argument and Gamma calculus, as presented in~\cite{gammacalculus}, \new{which in our case is simply a convenient way to compute time derivatives of quantities of the form~\eqref{eq:modified-norm} along the semi-group}. Fix some smooth $\phi:\mathcal C^{\infty}_c(\R^{2d})\to \mathcal C^{\infty}_c(\R^{2d})$ and $L\in \left\{ \mathcal L_H,\mathcal L_J,\mathcal L_D \right\}$. Define the generalized \textit{carré du champ} for all $h\in \mathcal C^{\infty}_c(\R^{2d})$ by
\begin{equation}\label{def:Gamma}
    \Gamma_{L,\phi}(h) = \frac{1}{2}\left( L(\phi(h)) - D_h\phi(h)Lh\right),
\end{equation}
where $D_h\phi$ denotes the differential operator of $\phi$. When $\phi(h)=h^2$ we retrieve the usual carré du champ, which is simply denoted by $\Gamma_L$.  Formally, using that $\int L h \mathrm{d}\mu = 0$ for any suitable $h$ by invariance of $\mu$ (which will be proven in Section~\ref{s-sec:mesure-inv}), we have:
\begin{equation}\label{eq:norm-derivative}
\partial_t \new{N}_k(P_tf - \mu(f)) = -\int_{\R^{2d}} \Gamma(P_tf) \dd\mu - \sum_{p=1}^k \int_{\R^{2d}}\po\sum_{i=0}^{p-1}\omega_{i,p}\Gamma_{i,p}(P_tf) + \omega_{p,p}\Gamma_{p,p}(P_tf)\pf\dd\mu\,,
\end{equation}
where, for $i<p$,
\[
\Gamma_{i,p}(h) = \Gamma_{ L,|\nabla_x^i\nabla_{v}^{p-i} \cdot|^2}(h)\quad\text{and}\quad \Gamma_{p,p}(h) = \Gamma_{ L,|(\nabla_x^p-\nabla_x^{p-1}\nabla_v)\cdot|^2}(h)
\,.
\]
If we could show that for all $h\in\mathcal C^{\infty}_c(\R^{2d})$
\[
\int_{\R^{2d}} \Gamma(h) \dd\mu + \sum_{p=1}^k \int_{\R^{2d}}\po\sum_{i=0}^{p-1}\omega_{i,p}\Gamma_{i,p}(h) + \omega_{p,p}\Gamma_{p,p}(h)\pf\dd\mu \geqslant \rho_k \new{N}_k(h-\mu(h)) \,,
\]
for some $\rho_k>0$,  then Theorem~\ref{thm:conv-Hk} would follow, at least informally, in the sense that we haven't justified the time derivatives in~\eqref{eq:norm-derivative}. We will actually prove a slightly stronger result (see Proposition~\ref{recurrence} below) for the sake of the induction argument. Recall from~\cite{gammacalculus} the proposition that allows for the computations of this generalized \textit{carré du champ} (the proof is straightforward).

\begin{prop}\label{prop:calcul_Gamma}
If there exists $A=(A_1,\cdots,A_p):\mathcal C^{\infty}\to (\mathcal C^{\infty})^p$ a linear operator such that $\phi(h) = |Ah|^2$, then
\[
\Gamma_{L,\phi}(h) = \Gamma_{L}(Ah) + Ah\cdot [L,A]h,
\]
where $\Gamma_{L}(Ah)=\sum_{i=1}^p\Gamma_L(A_ih)$
and $[L,A]=\left([L,A_1],\cdots,[L,A_p] \right)$.
\end{prop}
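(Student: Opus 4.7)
The plan is to unfold everything through the definitions, so that the only real content is linearity of $A$ and the elementary identity relating $L(f^2)$ to $\Gamma_L(f)$. First I would expand $\phi(h)=|Ah|^2=\sum_{i=1}^p (A_i h)^2$, and use linearity of $L$ to write
\[
L(\phi(h)) \;=\; \sum_{i=1}^p L\bigl((A_i h)^2\bigr).
\]
Since each $A_i$ is linear, its Fréchet differential coincides with itself, so the differential of $h\mapsto (A_i h)^2$ at $h$ applied to an arbitrary direction $k$ is $2\,A_i h\cdot A_i k$. Taking $k=Lh$ gives
\[
D_h\phi(h)\,Lh \;=\; 2\sum_{i=1}^p A_i h\cdot A_i(Lh).
\]

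Next I would use the definition of the classical carré du champ, $\Gamma_L(f)=\tfrac12\bigl(L(f^2)-2 f Lf\bigr)$, applied to $f=A_ih$, to rewrite
\[
L\bigl((A_i h)^2\bigr) \;=\; 2\Gamma_L(A_i h) \;+\; 2\,A_i h\cdot L(A_ih).
\]
Substituting these two identities into the definition~\eqref{def:Gamma} of $\Gamma_{L,\phi}(h)$ and gathering the terms yields
\begin{align*}
    \Gamma_{L,\phi}(h) &= \tfrac{1}{2}\sum_{i=1}^p\Bigl[2\Gamma_L(A_i h) + 2\,A_i h\cdot L(A_i h)\Bigr] - \sum_{i=1}^p A_i h\cdot A_i(Lh) \\
    &= \sum_{i=1}^p \Gamma_L(A_i h) + \sum_{i=1}^p A_i h\cdot\bigl(L A_i - A_i L\bigr)h \\
    &= \Gamma_L(Ah) + Ah\cdot[L,A]h,
\end{align*}
using the definitions $\Gamma_L(Ah)=\sum_i \Gamma_L(A_i h)$ and $[L,A]=([L,A_1],\dots,[L,A_p])$ stated in the proposition.

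There is essentially no obstacle: the whole proof is an algebraic verification, and the paper itself flags it as straightforward. The only point requiring a small bit of care is to recognize $D_h\phi$ as a genuine differential rather than a pointwise product; linearity of $A$ makes this automatic, because then $D_h(A_i h)=A_i$, so the chain rule gives $D_h\bigl((A_ih)^2\bigr)=2\,A_i h\,A_i$. Once this is observed, the remainder is just bookkeeping on the definition \eqref{def:Gamma} and on $\Gamma_L$.
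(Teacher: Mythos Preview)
Your proof is correct and is exactly the straightforward algebraic verification the paper alludes to (the paper gives no proof beyond the parenthetical remark that it is straightforward). The only content is indeed the linearity of $A$ and the identity $L(f^2)=2\Gamma_L(f)+2fLf$, both of which you handle correctly.
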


In the remainder of this work, we will denote $\Gamma$ for $\Gamma_L$.
For all $i,j,i',j'$ such that $i+j = i'+j'$, the scalar product here has to be understood as
\[\nabla_x^i\nabla_v^{j}h\cdot \nabla_x^{i'}\nabla_v^{j'}h := \sum_{|\alpha_1|=i}\sum_{|\alpha_2|=j'}\sum_{|\nu|=i'-i}(\partial_x^{\alpha_1}\partial_v^{\alpha_2+\nu}h)(\partial_x^{\alpha_1+\nu}\partial_v^{\alpha_2}h)\,.\]

In order to justify the discussion above, we will start by proving that the Gibbs measure is indeed invariant for the velocity jump Langevin process in Section~\ref{s-sec:mesure-inv}. Section~\ref{s-sec:H1-conv} will treat the  case $k=1$ which is the initialization of the induction argument. Section~\ref{s-sec:Hk-conv} will present the proof of convergence in $H^k$ space, namely Theorem~\ref{thm:conv-Hk}. Finally, Propositions~\ref{prop:deflyapunov} and~\ref{prop:conv-lyapunov} will be proved in Section~\ref{s-sec:proof-estimates}.

\subsection{Invariance of \texorpdfstring{$\mu$}{mu}}\label{s-sec:mesure-inv}

The use of the velocity jump process for the sampling of $\mu$, as well as the use of hypocoercivity method rely on the fact that $\mu$ is indeed an invariant measure of velocity jump Langevin process. Let us prove this property by computing the adjoint of $\mathcal L$ in $L^2(\mu)$, denoted as $\mathcal L^*$.

\begin{prop}\label{prop:adjoint}
Under Assumption~\ref{assu}, for all $h\in \mathcal{C}_c^{\infty}(\R^{2d})$:
\[
\mathcal{L}^* h = -v\cdot\nabla_x h +\nabla U_0\cdot \nabla_vh-\gamma v\cdot \nabla_v h + \gamma\Delta_v h
    +\frac{2}{1-\rho}\sum_{i=1}^d\E\co(h(\cdot,V^i)-h)\Psi\po\frac{\partial_i U_1}{2}(V_i^i-v_i)\pf\cf\,.
\]
\end{prop}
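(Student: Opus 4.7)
The plan is to compute $\mathcal L_H^*$, $\mathcal L_D^*$ and $\mathcal L_J^*$ separately and then observe that the ``extra'' terms produced by the first and third cancel exactly, leaving the formula claimed in the proposition. Take $h,g\in\mathcal C_c^\infty(\R^{2d})$. Since $\mathcal L_D=-\gamma v\cdot\nabla_v+\gamma\Delta_v$ acts only on the $v$ variable and the $v$-marginal of $\mu$ is the standard Gaussian, $\mathcal L_D$ is symmetric on $L^2(\mu)$, so $\mathcal L_D^* g=-\gamma v\cdot\nabla_v g+\gamma\Delta_v g$. For $\mathcal L_H$, integrating by parts against the density $\mathcal Z_\mu^{-1}e^{-U_0-U_1-|v|^2/2}$ and noting that $\mathcal L_H$ only contains $\nabla U_0$ whereas $\mu$ sees the full potential $U=U_0+U_1$, one obtains
\[\int (\mathcal L_H h)\,g\,\dd\mu=\int h\bigl(-v\cdot\nabla_x g+\nabla U_0\cdot\nabla_v g\bigr)\dd\mu+\int hg\,(v\cdot\nabla U_1)\dd\mu,\]
i.e.\ $\mathcal L_H^* g=-v\cdot\nabla_x g+\nabla U_0\cdot\nabla_v g+(v\cdot\nabla U_1)g$.

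The nontrivial piece is $(\mathcal L_J^i)^*$. Writing the expectation in the definition of $\mathcal L_J^i$ as an integral against $\gamma(\dd\xi):=(2\pi)^{-1/2}e^{-\xi^2/2}\dd\xi$, the ``gain'' contribution to $\int(\mathcal L_J^ih)g\dd\mu$ reads
\[\frac{2}{1-\rho}\int h(x,V^i(v,\xi))\,g(x,v)\,\Psi\!\Bigl(\tfrac{\partial_iU_1(x)}{2}(v_i-V_i^i(v,\xi))\Bigr)\dd\mu(x,v)\,\gamma(\dd\xi),\]
with $V_i^i(v,\xi)=\rho v_i+\sqrt{1-\rho^2}\xi$. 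To swap the roles of $h$ and $g$, the plan is to apply the orthogonal rotation $T_\rho(v_i,\xi)=\bigl(\rho v_i+\sqrt{1-\rho^2}\xi,-\sqrt{1-\rho^2}v_i+\rho\xi\bigr)$ in the $(v_i,\xi)$-plane. Its Jacobian is $1$, it preserves the joint standard Gaussian density on $\R^2$, and a direct computation shows that after renaming the dummy variables one gets $v_i-V_i^i(v,\xi)\leftrightarrow V_i^i(v,\xi)-v_i$ in the argument of $\Psi$, while the positions of $h$ and $g$ are exchanged. Combined with the ``loss'' contribution $-\int\lambda_ihg\dd\mu$, this yields
\[(\mathcal L_J^i)^*g(x,v)=\frac{2}{1-\rho}\E\bigl[(g(x,V^i)-g(x,v))\Psi\!\bigl(\tfrac{\partial_iU_1}{2}(V_i^i-v_i)\bigr)\bigr]+R_i(x,v)g(x,v),\]
where $R_i=\frac{2}{1-\rho}\E[\Psi(\tfrac{\partial_iU_1}{2}(V_i^i-v_i))]-\lambda_i$.

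Finally, I use the identity $\Psi(s)-\Psi(-s)=s$ from Assumption~\ref{assu}: it gives
\[\Psi\!\bigl(\tfrac{\partial_iU_1}{2}(v_i-V_i^i)\bigr)-\Psi\!\bigl(\tfrac{\partial_iU_1}{2}(V_i^i-v_i)\bigr)=\tfrac{\partial_iU_1}{2}(v_i-V_i^i),\]
and taking $\frac{2}{1-\rho}\E$ together with $\E[v_i-V_i^i]=(1-\rho)v_i$ yields $R_i=-v_i\,\partial_iU_1$. Summing over $i$, the extra term produced by $\mathcal L_J^*$ is $-(v\cdot\nabla U_1)g$, which cancels exactly the one coming from $\mathcal L_H^*$. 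Adding $\mathcal L_D^*$ gives the claimed expression. The main obstacle is precisely the change of variables by $T_\rho$ and the careful sign bookkeeping needed to verify that the $\Psi$-argument flips and that $R_i$ combines cleanly with $v\cdot\nabla U_1$; the boundedness of $\Psi$, its derivatives and those of $U_1$ granted by Assumption~\ref{assu}, together with the compact support of $h,g$, ensure that all integrals and integrations by parts are justified. As a by-product, taking $h=1$ in $\int(\mathcal Lh)g\dd\mu=\int h\mathcal L^*g\dd\mu$ shows that $\int\mathcal Lg\dd\mu=0$, i.e.\ $\mu$ is invariant, which will be used in Section~\ref{s-sec:mesure-inv}.
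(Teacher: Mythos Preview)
Your argument is correct and mirrors the paper's proof: compute $\mathcal L_H^*$, $\mathcal L_D^*$, $\mathcal L_J^*$ separately, use the Gaussian reversibility of the law of $V^i$ for the jump part, and observe that the extra $\pm(v\cdot\nabla U_1)g$ terms from $\mathcal L_H^*$ and $\mathcal L_J^*$ cancel. The only difference is that the paper invokes the reversibility of the kernel $k_0(x,v,\dd v')$ abstractly (equation~\eqref{revers}), whereas you implement it by an explicit orthogonal change of variables in the $(v_i,\xi)$-plane---this is the same idea.

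One small slip worth fixing: as written, your rotation $T_\rho$ sends the old $v_i$ to $\rho v_i-\sqrt{1-\rho^2}\xi$ in the new coordinates, not to $V_i^i=\rho v_i+\sqrt{1-\rho^2}\xi$, so the swap of $h$ and $g$ does not come out directly. You need either to compose with $\xi\mapsto-\xi$ (harmless since $\gamma$ is symmetric) or, more cleanly, to use the reflection $(v_i,\xi)\mapsto(\rho v_i+\sqrt{1-\rho^2}\xi,\ \sqrt{1-\rho^2}v_i-\rho\xi)$, which is an orthogonal involution and gives the exchange $(v_i,V_i^i)\leftrightarrow(V_i^i,v_i)$ in one step. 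Also, $\Psi$ itself is not bounded (only its derivatives are, by Assumption~\ref{assu}); this does not affect your argument since $h,g\in\mathcal C_c^\infty$, but the justification sentence should be adjusted.
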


\begin{proof}
Let $g,h \in \mathcal C^{\infty}_c(\R^{2d})$. By integrating by parts, we get 

\begin{align*}
    \int_{\R^{2d}} g\mathcal{L}_H h\dd\mu &= \mathcal{Z}_\mu^{-1}\int_{\R^{2d}} g(v\cdot\nabla_x h-\nabla U_0\cdot\nabla_v h)e^{-U(x)}e^{-|v|^2/2}\dd x \dd v \\
       &= -\mathcal{Z}_\mu^{-1}\int_{\R^{2d}} h v\cdot\nabla_x(ge^{-U(x)})e^{-|v|^2/2}\dd x\dd v +\mathcal{Z}_\mu^{-1}\int_{\R^{2d}} h\nabla_v\cdot(ge^{-|v|^2/2}\nabla U_0(x))e^{-U(x)}\dd x\dd v \\
       &= -\int_{\R^{2d}} hv\cdot\nabla_x g\dd\mu+\int_{\R^{2d}} gh v\cdot\nabla U(x)\dd\mu+\int_{\R^{2d}} h \nabla U_0(x)\cdot\nabla_v g\dd\mu-\int_{\R^{2d}} gh v\cdot \nabla U_0(x)\dd\mu \\
       &= \int_{\R^{2d}} h(-v\cdot \nabla_x g +\nabla U_0\cdot \nabla_v g)\dd\mu + \int_{\R^{2d}} gh v\cdot \nabla U_1\dd\mu\,.
\end{align*}
We also have
\begin{align*}
    \int_{\R^{2d}} g\mathcal{L}_D h\dd\mu &= \mathcal{Z}_\mu^{-1}\int_{\R^{2d}} g(-\gamma v\cdot \nabla_v h +\gamma\Delta_v h)e^{-U(x)}e^{-|v|^2/2}\dd x \dd v \\
    &= \gamma\mathcal{Z}_\mu^{-1}\po\int_{\R^{2d}} g\nabla_v(e^{-|v|^2/2})\cdot\nabla_v h e^{-U(x)}\dd x\dd v + \int_{\R^{2d}} h\Delta_v (ge^{-|v|^2/2})e^{-U(x)}\dd x\dd v\pf \\
    &= \gamma\mathcal{Z}_\mu^{-1}\po \int_{\R^{2d}}\nabla_v(g e^{-|v|^2/2})\cdot\nabla_v h e^{-U(x)}\dd x\dd v - \int_{\R^{2d}} \nabla_v g\cdot \nabla_v h e^{-U(x)}e^{-|v|^2/2}\dd x\dd y \pf \\ &\qquad + \gamma\mathcal{Z}_\mu^{-1}\int_{\R^{2d}} h\Delta_v (ge^{-|v|^2/2})e^{-U(x)}\dd x\dd v \\
    &= -\gamma\int_{\R^{2d}}\nabla_v g\cdot \nabla_v h \dd\mu = \int_{\R^{2d}} h\mathcal{L}_D g\dd\mu\,.
\end{align*}

For the jump part, denote $k_0(x,v,\cdot)$ the law of the random variable $V^i$. Then $k_0$ is reversible with respect to the standard Gaussian measure, in the sense that for all $x\in\R^d$,
\begin{equation}\label{revers}
    k_0(x,v,\dd v')e^{-|v|^2/2}\dd v = k_0(x,v',\dd v)e^{-|v'|^2/2}\dd v'.
\end{equation}
Indeed, since $V_j^i= v_j$ for $j\neq i$ and
\[V_i^i = \rho v_i + \sqrt{1-\rho^2}\new{\xi}\,,\]
the coordinates of $V^i$ are independent and each transition $v_j\mapsto V_j^i$ is reversible for the standard one-dimensional Gaussian. 
Using this reversibility, and the fact that $\Psi(s)-\Psi(-s) = s$, we get that
\begin{align*}
    \int_{\R^{2d}} g\mathcal{L}_J h\dd\mu &= \frac{2}{1-\rho}\sum_{i=1}^d\int_{\R^{2d}} g\E\co(h(x,V^i)-h(x,v))\Psi\po\frac{\partial_i U_1(x)}{2}(v_i-V^i_i)\pf\cf\dd\mu \\
    &= \frac{2}{1-\rho}\sum_{i=1}^d\int_{\R^{2d}} h\E\co (g(x,V^i)-g)\Psi\po-\frac{\partial_i U_1(x)}{2}(v_i-V^i_i)\pf\cf \\ &\qquad +\frac{2}{1-\rho}\sum_{i=1}^d\int_{\R^{2d}} gh\E\co\Psi\po-\frac{\partial_i U_1(x)}{2}(v_i-V^i_i)\pf-\Psi\po\frac{\partial_i U_1(x)}{2}(v_i-V^i_i)\pf\cf\dd\mu \\ 
    &= \frac{2}{1-\rho}\sum_{i=1}^d\int_{\R^{2d}} h\E\co (g(x,V^i)-g)\Psi\po-\frac{\partial_i U_1}{2}(v_i-V^i_i)\pf\cf \\ &\qquad - \frac{1}{1-\rho}\sum_{i=1}^d\int_{\R^{2d}} gh\partial_i U_1\E\co(v_i-V^i_i)\cf\dd\mu \\
    &= \frac{2}{1-\rho}\sum_{i=1}^d\int_{\R^{2d}} h\E\co (g(x,V^i)-g)\Psi\po-\frac{\partial_i U_1}{2}(v_i-V^i_i)\pf\cf -\int_{\R^{2d}} gh v\cdot\nabla U_1\dd\mu.
\end{align*}
Therefore, by summing the three terms, we get the result.
\end{proof}

\begin{corollary}
Under Assumption~\ref{assu}, for all $h\in\mathcal C_c^{\infty}(\R^{2d})$,
\[
\int_{\R^{2d}} \mathcal Lh \mu = 0.
\]
\end{corollary}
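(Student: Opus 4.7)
The plan is to read this off directly from Proposition~\ref{prop:adjoint} by taking the test function $g\equiv 1$. First I would observe that the identity $\int g\,\mathcal L h\,\dd\mu = \int h\,\mathcal L^* g\,\dd\mu$ established in the proof of Proposition~\ref{prop:adjoint} actually extends beyond $g\in\mathcal C_c^\infty(\R^{2d})$ to any smooth $g$ (in particular to $g\equiv 1$) when $h$ is compactly supported: the integrations by parts performed in that proof move all spatial and velocity derivatives off of $h$, so the boundary terms at infinity are killed by the compact support of $h$ alone, with no requirement on $g$.

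Second, I would compute $\mathcal L^*1$ by direct inspection of the formula in Proposition~\ref{prop:adjoint}: each of the transport, drift, friction and diffusion terms involves a first or second derivative of the argument, and the jump term involves the difference $g(x,V^i)-g(x,v)$; all of these vanish identically when $g\equiv 1$. Thus $\mathcal L^*1=0$, and the corollary follows from
\[\int_{\R^{2d}} \mathcal L h\,\dd\mu = \int_{\R^{2d}} 1\cdot \mathcal L h\,\dd\mu = \int_{\R^{2d}} h\,\mathcal L^*1\,\dd\mu = 0.\]

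If one preferred a self-contained computation avoiding this extension remark, the alternative is to integrate each of $\mathcal L_H h,\mathcal L_D h, \mathcal L_J h$ against $\mu$ separately, repeating the calculations of the proof of Proposition~\ref{prop:adjoint} with $g\equiv 1$. The Hamiltonian part produces the residual $\int h\, v\cdot\nabla U_1\,\dd\mu$, the friction/dissipation part vanishes on its own by the usual $L^2(\mu)$ symmetry of $\mathcal L_D$, and the jump part produces $-\int h\,v\cdot\nabla U_1\,\dd\mu$ thanks to the reversibility \eqref{revers} of the velocity kernels combined with the identity $\Psi(s)-\Psi(-s)=s$ from Assumption~\ref{assu}. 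The two non-zero contributions cancel exactly, yielding the claim. No real obstacle is expected here: this is essentially a book-keeping corollary of work already carried out for Proposition~\ref{prop:adjoint}, the only mildly delicate point being the justification that $g\equiv 1$ is an admissible choice in the integration-by-parts identity, which is immediate.
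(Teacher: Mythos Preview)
Your proposal is correct and follows exactly the paper's approach: the paper's proof is the one-line remark that $\mathcal L^*1=0$, together with the observation that although $g\equiv 1$ is not compactly supported, the computations of Proposition~\ref{prop:adjoint} still go through for this choice of $g$. Your write-up simply spells out both of these points in slightly more detail.
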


\begin{proof}
This is a direct consequence of the fact that $\mathcal L^*1 = 0$
 (although the constant function $1$ is not compactly supported, the computations of Proposition~\ref{prop:adjoint} clearly works when $g=1$).
\end{proof}

\subsection{Exponential decay in \texorpdfstring{$H^1$}{H1}}\label{s-sec:H1-conv}

In this section, we prove convergence of the process in $H^1(\mu)$\new{, which is the initialization step of the induction argument to prove Theorem~\ref{thm:conv-Hk}}. The modified Sobolev norm here reads
\[
\new{N}_1(h) = \int_{\R^{2d}} h^2 \dd\mu + \int_{\R^{2d}} \omega_{0,1}|\nabla_v h|^2 + \omega_{1,1}|(\na_x - \na_v) h|^2 \dd\mu,
\]
and Equation~\eqref{eq:norm-derivative} can be written:
\[
\partial_t \new{N}_1(P_tf - \mu(f)) = -\int_{\R^{2d}} \Gamma(P_tf) \dd\mu -  \int_{\R^{2d}}\po\omega_{0,1}\Gamma_{0,1}(P_tf) + \omega_{1,1}\Gamma_{1,1}(P_tf)\pf\dd\mu\,.
\]
\new{The goal of this section is to prove the following which, in view of~\eqref{eq:norm-derivative}, implies an exponential decay in $H^1$   as it bounds  the norm $N_1$ in terms of its dissipation along the semi-group.}

\begin{lem}\label{lem:initialisation}
Under Assumption~\ref{assu}, there exist $\omega_{0,1},\omega_{1,1},\rho_1$ such that for all $h\in\mathcal C_c^{\infty}(\R^{2d})$ satisfying $\mu(h)=0$:
\[
\int_{\R^{2d}} \Gamma(h) \dd\mu + \int_{\R^{2d}}\po\omega_{0,1}\Gamma_{0,1}(h) + \omega_{1,1}\Gamma_{1,1}(h)\pf\dd\mu \geqslant \rho_1 \po \new{N}_1(h) + \int_{\R^{2d}}|\nabla_x\nabla_v h|^2+|\nabla_v^2 h|^2\dd\mu \pf\,.
\]
\end{lem}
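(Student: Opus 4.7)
The plan is to adapt the $H^1$ Villani hypocoercivity argument, as used in~\cite{villani} for the Langevin diffusion, to our hybrid setting with an additional non-local jump operator. The key structural observation, already highlighted in the introduction, is that although $\mathcal{L}_J$ is non-local, the diffusive part $\mathcal{L}_D$ already produces coercivity in $\nabla_v$, so that the standard Villani commutator computation can be carried through provided the non-local commutator contributions are suitably controlled. My first step would be to compute each carré du champ using Proposition~\ref{prop:calcul_Gamma}: $\Gamma_{\mathcal L_H}\equiv 0$ since $\mathcal L_H$ is first-order, $\Gamma_{\mathcal L_D}(h)=\gamma|\nabla_v h|^2$, and $\Gamma_{\mathcal L_J}(h)=\tfrac12\sum_i\lambda_i\int(h(x,v')-h(x,v))^2\, q_i(x,v,\dd v')\geqslant 0$. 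Hence $\int\Gamma(h)\dd\mu\geqslant\gamma\int|\nabla_v h|^2\dd\mu$, and the leading positive contributions from $\Gamma_{0,1}$ and $\Gamma_{1,1}$ (via $\Gamma_L(\nabla_v h)$ and $\Gamma_L((\nabla_x-\nabla_v)h)$) are $\gamma\int|\nabla_v^2 h|^2\dd\mu$ and $\gamma\int|\nabla_v(\nabla_x-\nabla_v)h|^2\dd\mu$; the elementary bound $|\nabla_v(\nabla_x-\nabla_v)h|^2\geqslant\tfrac12|\nabla_x\nabla_v h|^2-|\nabla_v^2 h|^2$ then ensures, after appropriate weighting, that $\int|\nabla_x\nabla_v h|^2\dd\mu$ appears on the right-hand side of the conclusion.

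Next I would expand the commutator terms $\nabla_v h\cdot[\mathcal L,\nabla_v]h$ and $(\nabla_x-\nabla_v)h\cdot[\mathcal L,\nabla_x-\nabla_v]h$. The Langevin commutators are $[\mathcal L_H,\nabla_v]=-\nabla_x$, $[\mathcal L_D,\nabla_v]=\gamma\nabla_v$, $[\mathcal L_H,\nabla_x]=\nabla^2 U_0\cdot\nabla_v$, $[\mathcal L_D,\nabla_x]=0$. After integration, and using $(\nabla_x-\nabla_v)h\cdot\nabla_x h=|\nabla_x h|^2-\nabla_v h\cdot\nabla_x h$, these produce the crucial good term $+\omega_{1,1}\int|\nabla_x h|^2\dd\mu$, together with sign-indefinite cross terms of the form $-(\omega_{0,1}+\omega_{1,1})\int\nabla_v h\cdot\nabla_x h\dd\mu$ and bounded contributions from $\nabla^2 U_0\cdot\nabla_v h$ (controlled by Assumption~\ref{assu}(3)). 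For $\omega_{0,1}\ll\omega_{1,1}\ll 1$, Young's inequality absorbs these cross terms into the good $\omega_{1,1}|\nabla_x h|^2$ and the coercive $\gamma|\nabla_v h|^2$ contributions.

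The principal obstacle, and the main departure from the pure Langevin case, is the treatment of the non-local commutators $[\mathcal L_J,\nabla_v]h$ and $[\mathcal L_J,\nabla_x]h$. Differentiating the integral defining $\mathcal L_J^i$ with respect to $v_i$ or $x_j$ yields, schematically, two kinds of contributions: a \emph{local} term of the form $\mathbb E[\partial_{v_i} h(x,V^i)\,\Psi(\cdots)]$, pointwise dominated by $C|\nabla h|$ using the boundedness of $\Psi$ and the reversibility~\eqref{revers} of the Gaussian jump kernel, and a \emph{non-local} term of the form $\mathbb E[(h(x,V^i)-h(x,v))\,\partial\Psi(\cdots)]$, whose $L^2(\mu)$ norm is controlled via Cauchy--Schwarz in $q_i(x,v,\cdot)$ by $C\bigl(\int\Gamma_{\mathcal L_J}(h)\dd\mu\bigr)^{1/2}$, then absorbed by the non-negative $\int\Gamma_{\mathcal L_J}(h)\dd\mu$ contribution already present in $\int\Gamma(h)\dd\mu$. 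The boundedness of $\partial_i U_1$, $\nabla^2 U_1$ and of the derivatives of $\Psi$ from Assumption~\ref{assu}(2),(4) is essential for these estimates to hold with constants independent of $(x,v)$. The analogous argument applies to $[\mathcal L_J,\nabla_x]h$, noting that $x$-derivatives of $\lambda_i$ and $q_i$ introduce extra factors of $\nabla^2 U_1$, which are bounded by the same assumption.

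Combining all of the above produces a lower bound of the form
\[
c_1\int_{\R^{2d}}(|\nabla_x h|^2+|\nabla_v h|^2)\dd\mu+c_2\int_{\R^{2d}}(|\nabla_x\nabla_v h|^2+|\nabla_v^2 h|^2)\dd\mu,
\]
for some $c_1,c_2>0$ after fixing $\omega_{0,1}\ll\omega_{1,1}\ll 1$. Invoking the Poincaré inequality for $\mu$ — valid because $U_0$ is coercive by Assumption~\ref{assu}(3), $U_1$ has bounded Hessian by Assumption~\ref{assu}(2), and the $v$-marginal is a standard Gaussian — together with the zero-mean condition $\mu(h)=0$ converts a fraction of the first integral into $\int h^2\dd\mu$, yielding the claimed lower bound on $N_1(h)$ plus the higher-order Sobolev terms with an explicit $\rho_1>0$. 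The non-local commutator estimate in the third paragraph is the step I expect to require the most care.
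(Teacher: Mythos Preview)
Your overall strategy—Villani's $H^1$ hypocoercivity with the jump commutators treated as perturbations—is the paper's strategy, and the Langevin part of your sketch is essentially correct (modulo the minor slip that the paper takes $\omega_{0,1}>\omega_{1,1}$, not $\omega_{0,1}\ll\omega_{1,1}$). But your treatment of the jump commutators, which you rightly flag as the delicate step, has a real gap.

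First, $\Psi$ is \emph{not} bounded: Assumption~\ref{assu}(4) bounds only its derivatives, while~\eqref{eq:Psi} forces $\Psi(s)\geqslant s_+$, so $\Psi$ grows linearly. Hence the ``local'' term $\mathbb E[\partial_{v_i}h(x,V^i)\,\Psi(\cdots)]$ in $[\mathcal L_J,\nabla_v]h$ is \emph{not} pointwise dominated by $C|\nabla h|$. After using reversibility~\eqref{revers} it produces a factor $(1+v_i^2)$, which must be traded for a second-order derivative via the Gaussian moment lemma $\int v_i^2 g^2\,e^{-v_i^2/2}\dd v_i\leqslant C\int(g^2+(\partial_{v_i}g)^2)\,e^{-v_i^2/2}\dd v_i$ (Lemma~\ref{villanilemma}). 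This is precisely why $|\nabla_v^2 h|^2$ must appear on the right-hand side of the statement: it is needed to absorb the jump commutator, not merely a by-product of $\Gamma(\nabla_v h)$.

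Second, your plan to absorb the non-local term $\mathbb E[(h(x,V^i)-h)\,\partial\Psi(\cdots)]$ into $\int\Gamma_{\mathcal L_J}(h)\dd\mu$ via ``Cauchy--Schwarz in $q_i$'' does not go through as stated: the expectation is against the Gaussian kernel, while $q_i$ carries the weight $\Psi$, and $\Psi$ is not bounded below away from zero. Moreover, $[\mathcal L_J,\nabla_x]h$ carries an unbounded factor $(v_i-V_i^i)$ in front of $(h'-h)$, which your scheme does not address. The paper's route (Lemma~\ref{lem:commutator-jump}) is different and cleaner: it bounds $\int\mathbb E[(1+(v_i-V_i^i)^2)(h(x,V^i)-h)^2]\dd\mu$ by inserting the Gaussian projection $\Pi_{v_i}h$, applying the one-dimensional Gaussian Poincar\'e inequality $\int(h-\Pi_{v_i}h)^2\dd\mu\leqslant\int(\partial_{v_i}h)^2\dd\mu$, and handling the polynomial weight with Lemma~\ref{villanilemma}. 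The outcome is $\int|[\nabla,\mathcal L_J]h|^2\dd\mu\leqslant C\int(|\nabla_v h|^2+|\nabla_v^2 h|^2)\dd\mu$, after which Young's inequality and the choice of weights close the argument exactly as in your final paragraph.
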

The terms of order $2$ in the right hand side are present to compensate future terms in the induction, but are not required for the the convergence in $H^1(\mu)$. This convergence relies on the so-called Poincare inequality.

\begin{prop}\label{prop:poincare-inquality}
Under Assumption~\ref{assu}, the Gibbs measure~\eqref{eq:Gibbs-measure} satisfies a Poincaré inequality: there exists $C_P>0$ such that 
\begin{equation}\label{eq:poincare}
\int_{\R^{2d}} \po h - \mu(h)\pf^2 \dd \mu \leqslant C_P \int_{\R^{2d}}|\nabla h|^2 \dd \mu\,. 
\end{equation}
for all $h\in\mathcal C^{\infty}_c(\R^{2d})$.
\end{prop}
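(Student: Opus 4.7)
The proof strategy is to tensorize, using that
\[\dd\mu(x,v) = \mathcal Z_\mu^{-1} e^{-U(x)} e^{-|v|^2/2}\dd x \dd v = \dd\mu_x \otimes \dd\mu_v\,,\]
where $\mu_x\propto e^{-U}$ on $\mathcal X$ and $\mu_v$ is the standard Gaussian on $\R^d$. Since the Poincaré inequality tensorizes (the constant of $\mu_1\otimes \mu_2$ with respect to the full gradient is $\max(C_1,C_2)$), it suffices to establish Poincaré for $\mu_x$ and $\mu_v$ separately. The Gaussian $\mu_v$ satisfies a Poincaré inequality with constant $1$ by the Bakry-Émery criterion, since $\nabla^2(|v|^2/2)=I_d$.

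For $\mu_x$ on $\T^d$, smoothness of $U$ on the compact manifold together with the Poincaré inequality of the uniform probability measure on $\T^d$ (and Holley-Stroock, since $U$ is bounded on $\T^d$) gives the result directly.

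For $\mu_x$ on $\R^d$, the plan is to apply the Lyapunov-function criterion of Bakry-Barthe-Cattiaux-Guillin. Take $W(x)=e^{\alpha|x|^2/2}$ with $\alpha>0$ to be fixed, and let $\mathcal L_0 f = \Delta f - \nabla U\cdot\nabla f$ be the overdamped Langevin generator associated with $\mu_x$. A direct computation yields
\[\mathcal L_0 W = \bigl(\alpha d + \alpha^2 |x|^2 - \alpha\, x\cdot \nabla U(x)\bigr)W\,.\]
Assumption~\ref{assu}(3) gives $x\cdot \nabla U_0(x)\geqslant \kappa|x|^2$ for $x\notin\mathcal K$, and Assumption~\ref{assu}(2) implies $|x\cdot\nabla U_1(x)|\leqslant \|\nabla U_1\|_\infty |x|$. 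Hence for $x\notin\mathcal K$,
\[\mathcal L_0 W \leqslant \bigl(\alpha d + (\alpha^2 - \alpha\kappa)|x|^2 + \alpha\|\nabla U_1\|_\infty |x|\bigr)W\,,\]
whose prefactor tends to $-\infty$ as $|x|\to\infty$ for any $\alpha\in(0,\kappa)$. Fixing such an $\alpha$ therefore provides $\lambda,C>0$ and a compact set $\tilde K\supset \mathcal K$ with $\mathcal L_0 W \leqslant -\lambda W + C\mathbf{1}_{\tilde K}$. Combined with a local Poincaré inequality on $\tilde K$ (trivial by compactness and smoothness of $U$), this yields Poincaré for $\mu_x$ via the Bakry-Barthe-Cattiaux-Guillin criterion.

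The main obstacle is the Lyapunov drift computation: one must balance the $\alpha^2|x|^2$ term arising from $\Delta W$ and the linear perturbation induced by $U_1$ against the quadratic drift $\alpha\kappa|x|^2$ from $U_0$. Both are controlled by taking $\alpha$ small enough, so no deep estimate beyond the drift assumption on $U_0$ and the boundedness of $\nabla U_1$ is needed.
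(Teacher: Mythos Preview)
Your argument is correct. The paper itself does not give a proof at all: it simply writes ``Proof of such inequality can be found in~[BakryGentilLedoux]'' and moves on. What you have done is essentially unpack that citation, and your route (tensorization of $\mu=\mu_x\otimes\mu_v$, Bakry--\'Emery for the Gaussian factor, Holley--Stroock on the torus, and the Bakry--Barthe--Cattiaux--Guillin Lyapunov criterion on $\R^d$ with $W=e^{\alpha|x|^2/2}$) is exactly the standard way to extract a Poincar\'e inequality from the drift condition in Assumption~\ref{assu}(3) combined with the bounded perturbation $\nabla U_1$. The Lyapunov computation is clean and the choice $\alpha\in(0,\kappa)$ is the right one. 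There is nothing to compare against in the paper beyond the bare reference.
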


\begin{proof}
Proof of such inequality can be found in~\cite{BakryGentilLedoux}.
\end{proof}

In order to apply Proposition~\ref{prop:calcul_Gamma}, we need to express the commutators. For the Hamiltonian part we get:
\[
[\mathcal L_H,\na_x] = \na^2U_0\na_v,\qquad [\mathcal L_H,\na_v] = - \na_x. 
\]
For the diffusion part we get:
\[
[\mathcal L_D,\na_x] = 0,\qquad [\mathcal L_D,\na_v] = \gamma \na_v. 
\]
Regarding the jump part, we have the following.
\begin{lem}\label{lem:commutator-jump}
Under Assumption~\ref{assu}, there exists $C>0$ such that for all $h\in \mathcal C_c^{\infty}(\R^{2d})$:
\begin{equation*}
\int_{\R^{2d}}|[\nabla_x,\mathcal{L}_J]h|^2\dd\mu + \int_{\R^{2d}}|[\nabla_v,\mathcal{L}_J]h|^2\dd\mu \leqslant C \po \int_{\R^{2d}} |\nabla_{v} h|^2\dd\mu +  \int_{\R^{2d}}|\nabla_v^2 h|^2\dd\mu \pf\,.
\end{equation*}
\end{lem}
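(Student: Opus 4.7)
The plan is to compute the two commutators explicitly, use the fundamental theorem of calculus to rewrite $h(x,V^i)-h(x,v)$ as an integral of $\partial_{v_i}h$, and then rely on Gaussian tail bounds together with a one-dimensional weighted Gaussian inequality.

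\emph{Commutators.} A direct computation shows that $[\partial_{v_j},\mathcal{L}_J^i]h=0$ whenever $j\neq i$, since in that case $v_j$ plays no role in $V^i$ nor in the jump intensity $\Psi(\tfrac{\partial_iU_1}{2}(v_i-V_i^i))$. The two remaining commutators read
\[
[\partial_{x_j},\mathcal{L}_J^i]h = \frac{2}{1-\rho}\,\E\!\left[(h(x,V^i)-h(x,v))\,\Psi'(\cdot)\,\tfrac{\partial_{ij}^2U_1(x)}{2}(v_i-V_i^i)\right],
\]
and $[\partial_{v_i},\mathcal{L}_J^i]h = -2\,\E[(\partial_{v_i}h)(x,V^i)\Psi(\cdot)]+\E[(h(x,V^i)-h(x,v))\Psi'(\cdot)\partial_iU_1(x)]$, where in the second expression the $(\rho-1)$ factor from the chain rule on $V_i^i$ cancels the $1/(1-\rho)$ prefactor. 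Under Assumption~\ref{assu}, $\Psi'$, $\partial_iU_1$, and $\partial_{ij}^2U_1$ are bounded and $|\Psi(s)|\leqslant C(1+|s|)$, so every coefficient is at worst linear in $|v_i-V_i^i|$.

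\emph{Pointwise bound via FTC and Fubini.} Since $V^i$ differs from $v$ only in the $i$-th coordinate, writing $h(x,V^i)-h(x,v)=\int_{v_i}^{V_i^i}\partial_{v_i}h(x,v_{-i},s)\,\dd s$ and applying Cauchy--Schwarz yields
\[
|h(x,V^i)-h(x,v)|^2\leqslant |v_i-V_i^i|\int_{v_i\wedge V_i^i}^{v_i\vee V_i^i}|\partial_{v_i}h(x,v_{-i},s)|^2\,\dd s.
\]
Combining this with Cauchy--Schwarz for $\E_\xi$ and the moment bound $\E_\xi[(v_i-V_i^i)^{2}]\leqslant C(1+v_i^2)$, the pointwise square of each commutator contribution is dominated either by $C(1+v_i^2)\,\E_\xi[|v_i-V_i^i|\int_{v_i\wedge V_i^i}^{v_i\vee V_i^i}|\partial_{v_i}h|^2\dd s]$ or by $C(1+v_i^2)\,\E_\xi[|\partial_{v_i}h(x,V^i)|^2]$. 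Integrating against $\mu$ and using Fubini to swap the $\dd s$ and $\dd\mu$ integrations reduces the proof to the two Gaussian estimates
\[
\E_{v_i,\xi}\!\left[(1+v_i^2)|v_i-V_i^i|\,\1_{s\in[v_i\wedge V_i^i,\,v_i\vee V_i^i]}\right]\leqslant C(1+s^2)e^{-s^2/2},
\]
and $\E_{v_i}[(1+v_i^2)\mid V_i^i=s]\leqslant C(1+s^2)$. The first will follow from the observation that the indicator forces $|v_i|\geqslant|s|$ or $|V_i^i|\geqslant|s|$: conditioning on $v_i$ (or on $V_i^i$ by symmetry of their $\mathcal N(0,1)$ marginals) and using the Gaussian tail $\int_{|u|\geqslant|s|}(1+|u|^3)e^{-u^2/2}\dd u\leqslant C(1+s^2)e^{-s^2/2}$ yields the claim. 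The second is a direct consequence of Gaussian conditioning, since $v_i\mid V_i^i=s$ is $\mathcal N(\rho s,\,1-\rho^2)$.

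\emph{Conclusion and main obstacle.} These estimates will give
\[\int|[\nabla_x,\mathcal{L}_J]h|^2\,\dd\mu+\int|[\nabla_v,\mathcal{L}_J]h|^2\,\dd\mu\leqslant C\int(1+v_i^2)|\partial_{v_i}h|^2\,\dd\mu,\]
and the lemma follows from the one-dimensional weighted Gaussian inequality $\int v_i^2 g^2\,\dd\mu\leqslant 2\int g^2\,\dd\mu+4\int(\partial_{v_i}g)^2\,\dd\mu$, obtained from the identity $v_ie^{-v_i^2/2}=-\partial_{v_i}e^{-v_i^2/2}$ and one integration by parts in $v_i$, applied with $g=\partial_{v_i}h$. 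The main technical subtlety is the degenerate regime $\rho=-1$ (Bouncy Particle limit): there, $v_i+t(V_i^i-v_i)=(1-2t)v_i$ is singular at $t=1/2$, so a naive change of variables on the point at which $\partial_{v_i}h$ is evaluated would fail. This is precisely why we bound $|h(x,V^i)-h(x,v)|^2$ by a Cauchy--Schwarz integral \emph{before} averaging, reducing the analysis to Gaussian tail bounds on the joint law of $(v_i,V_i^i)$ which are uniform in $\rho\in[-1,1)$.
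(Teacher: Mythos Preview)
Your proof is correct and takes a genuinely different route from the paper. The paper computes the same commutators, but instead of the fundamental theorem of calculus it controls $(h(x,V^i)-h)^2$ by inserting the projection $\Pi_{v_i}h=\frac{1}{\sqrt{2\pi}}\int h(x,v_{-i},w)e^{-w^2/2}\dd w$, uses the reversibility of the law of $V^i$ with respect to the Gaussian to reduce everything to $\int(1+v_i^2)(h-\Pi_{v_i}h)^2\,\dd\mu$, and then invokes the one-dimensional Gaussian Poincar\'e inequality $\int(h-\Pi_{v_i}h)^2\,\dd\mu\leqslant\int(\partial_{v_i}h)^2\,\dd\mu$. Both arguments finish with the same weighted bound $\int v_i^2 g^2\,\dd\mu\leqslant 2\int g^2\,\dd\mu+4\int(\partial_{v_i}g)^2\,\dd\mu$ applied to $g=\partial_{v_i}h$. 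Your FTC/tail-bound route is more elementary in that it bypasses Poincar\'e and reversibility entirely, trading them for explicit joint-Gaussian tail estimates on $(v_i,V_i^i)$; the paper's route is more structural and would adapt more readily to jump kernels that are merely reversible with a spectral gap rather than explicitly Gaussian. Your closing remark about $\rho=-1$ being the main obstacle is a bit overstated: once you bound $|h(x,V^i)-h|^2$ by $|v_i-V_i^i|\int_{v_i\wedge V_i^i}^{v_i\vee V_i^i}|\partial_{v_i}h|^2\,\dd s$ and Fubini, no change of variables on the evaluation point is ever needed, so the argument is uniform in $\rho\in[-1,1)$ by design.
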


To prove Lemma~\ref{lem:commutator-jump}, we need two intermediate results:

\begin{lem}\label{villanilemma} For any function $h\in \mathcal C_c^{\infty}(\R^{2d})$ and any $x\in\R^d$,
\[ \int_{\R^d} |v|^2 h^2(x,v)e^{-|v|^2/2}\dd v \leqslant 2d\int_{\R^d} h^2(x,v)e^{-|v|^2/2}\dd v + 4\int_{\R^d}|\nabla_v h(x,v)|^2 e^{-|v|^2/2}\dd v\,.\]
\end{lem}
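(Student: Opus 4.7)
The plan is to rewrite the weight $|v|^2 e^{-|v|^2/2}$ using the identity $|v|^2 e^{-|v|^2/2} = -v\cdot\nabla_v e^{-|v|^2/2}$ and then integrate by parts in $v$ only (with $x$ held fixed). Since $h$ has compact support, no boundary terms appear.

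Concretely, first I would write
\begin{equation*}
\int_{\R^d} |v|^2 h^2 e^{-|v|^2/2}\dd v = -\int_{\R^d} h^2\, v\cdot \nabla_v e^{-|v|^2/2}\dd v = \int_{\R^d} \nabla_v\cdot(h^2 v)\,e^{-|v|^2/2}\dd v,
\end{equation*}
so expanding the divergence gives
\begin{equation*}
\int_{\R^d} |v|^2 h^2 e^{-|v|^2/2}\dd v = d\int_{\R^d} h^2 e^{-|v|^2/2}\dd v + 2\int_{\R^d} h\, v\cdot\nabla_v h\,e^{-|v|^2/2}\dd v.
\end{equation*}

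Second, I would absorb the cross term via Young's inequality $2|h||v||\nabla_v h| \leqslant \tfrac12 h^2 |v|^2 + 2|\nabla_v h|^2$, applied pointwise. This yields
\begin{equation*}
\int_{\R^d} |v|^2 h^2 e^{-|v|^2/2}\dd v \leqslant d\int_{\R^d} h^2 e^{-|v|^2/2}\dd v + \tfrac12\int_{\R^d} |v|^2 h^2 e^{-|v|^2/2}\dd v + 2\int_{\R^d} |\nabla_v h|^2 e^{-|v|^2/2}\dd v.
\end{equation*}
Finally, moving the $\tfrac12$-term to the left-hand side and multiplying through by $2$ produces exactly the claimed inequality with constants $2d$ and $4$.

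There is no real obstacle: the only subtle point is choosing the weights in Young's inequality so that the resulting constant in front of $|v|^2 h^2$ on the right is strictly less than $1$, which is what allows absorption into the left-hand side. The coefficients $2d$ and $4$ come out of the optimal split $\tfrac12 \cdot \tfrac12$, and compactness of support of $h$ justifies the integration by parts without boundary contributions.
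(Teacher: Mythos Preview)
Your proof is correct and essentially identical to the paper's own argument: the paper also uses the identity $v e^{-|v|^2/2}=-\nabla_v e^{-|v|^2/2}$, integrates by parts, expands $\nabla_v\cdot(h^2 v)$, applies Young's inequality with the same weights to obtain the $\tfrac12$-coefficient, and absorbs.
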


\begin{proof}
This is a particular case of the Lemma A.24 of \cite{villani}. Notice that $v e^{-|v|^2/2} = -\nabla e^{-|v|^2/2}$. Hence, an integration by parts and Young's inequality yield, for any $x$,
\begin{equation*}
    \begin{split}
        \int_{\R^{d}} |v|^2 h^2(x,v) e^{-|v|^2/2} \dd v &= - \int_{\R^{d}} h^2(x,v) v\cdot \nabla e^{-|v|^2/2}\dd v \\
        &= \int_{\R^{d}} \nabla\cdot(h^2(x,v)v)e^{-|v|^2/2}\dd v \\
        &= d \int_{\R^d} h^2(x,v) e^{-|v|^2/2}\dd v + \int_{\R^{d}} 2 h(x,v)\nabla h(x,v) \cdot v e^{-|v|^2/2} \dd v \\
        &\leqslant d \int_{\R^d} h^2(x,v) e^{-|v|^2/2}\dd v + \int_{\R^{d}} 2|\nabla h(x,v)|^2 e^{-|v|^2/2} \dd v + \frac{1}{2}\int_{\R^d} |v|^2 h^2(x,v)e^{-|v|^2/2}\dd v\,,
    \end{split}
\end{equation*}
and thus the result.
\end{proof}

\begin{prop}[Poincaré for Gaussians] For any function $h\in \mathcal C_c^{\infty}(\R^{d})$,
\begin{equation}\label{poincaregauss}
    \int_{\R^d} \po h(x) - \frac{1}{(2\pi)^{d/2}}\int_{\R^d}h(y) e^{-|y|^2/2}\dd y\pf^2 e^{-|x|^2/2}\dd x \leqslant \int_{\R^{d}}|\nabla h(x)|^2 e^{-|x|^2/2}\dd x\,.
\end{equation}
\end{prop}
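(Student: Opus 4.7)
The plan is to use the Ornstein--Uhlenbeck semigroup, whose invariant measure is the standard Gaussian probability measure $\nu$ on $\R^d$ given by $\dd \nu = (2\pi)^{-d/2} e^{-|x|^2/2}\dd x$. After dividing the stated inequality by $(2\pi)^{d/2}$, it is equivalent to prove that for all $h\in\mathcal C_c^\infty(\R^d)$,
\[
\int \po h - \int h\, \dd\nu\pf^2 \dd\nu \leqslant \int |\na h|^2\, \dd\nu\,.
\]
Define, for $t\geqslant 0$ and $x\in\R^d$,
\[
P_t h(x) = \E\co h\po e^{-t}x + \sqrt{1-e^{-2t}}\,Z\pf\cf\,,\qquad Z\sim \mathcal N(0,I_d)\,.
\]
This is a Markov semigroup admitting $\nu$ as symmetric invariant measure, with generator $Lg = \Delta g - x\cdot \na g$ on smooth functions. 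Two ingredients drive the proof: first, the commutation relation $\na P_t h = e^{-t}\, P_t \na h$, obtained by differentiating the Mehler representation in $x$; second, the integration-by-parts identity $\int g\, Lg\, \dd\nu = -\int |\na g|^2\, \dd\nu$ for smooth $g$ with sufficient decay, which applies in particular to $g = P_t h$.

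Combining these with the fundamental theorem of calculus, and using dominated convergence (valid because $h$ is bounded) to identify the boundary term as $t\to\infty$, I get
\begin{align*}
\int \po h - \int h\, \dd\nu \pf^2 \dd\nu = -\int_0^{\infty}\frac{\dd}{\dd t}\int (P_t h)^2\, \dd\nu\, \dd t = 2\int_0^{\infty}\int |\na P_t h|^2\, \dd\nu\, \dd t\,.
\end{align*}
Injecting the commutation relation and invoking the contractivity of $P_t$ in $L^2(\nu)$ (a consequence of Jensen's inequality applied component-wise to $\na h$), I obtain
\[
\int |\na P_t h|^2\, \dd\nu = e^{-2t}\int |P_t \na h|^2\, \dd\nu \leqslant e^{-2t}\int |\na h|^2\, \dd\nu\,,
\]
and integrating in $t$, using $\int_0^\infty 2 e^{-2t}\, \dd t = 1$, yields exactly the claimed inequality with the sharp constant $1$.

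Since $h\in\mathcal C_c^\infty(\R^d)$ is bounded with all derivatives bounded, no serious obstacle arises: every analytic step (differentiation under the integral, integration by parts, dominated convergence) is fully justified. The result is a standard instance of the Bakry--Émery criterion applied to the Ornstein--Uhlenbeck generator, for which the curvature-dimension condition $\Gamma_2 \geqslant \Gamma$ holds with constant $1$.
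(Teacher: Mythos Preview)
Your proof is correct. The paper does not actually prove this proposition: it simply refers the reader to Bakry--Gentil--Ledoux, so there is nothing to compare in terms of strategy. Your argument via the Ornstein--Uhlenbeck semigroup and the commutation $\nabla P_t = e^{-t}P_t\nabla$ is precisely one of the classical proofs given in that reference (the Bakry--\'Emery route), and all the analytic justifications you mention are indeed routine for $h\in\mathcal C_c^\infty$.
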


\begin{proof}
Proof of such inequality can be found in~\cite{BakryGentilLedoux}.
\end{proof}

\begin{proof}[Proof of Lemma~\ref{lem:commutator-jump}]
We have
\[
        [\nabla_x,\mathcal{L}_J]h = \sum_{i=1}^d [\nabla_x,\mathcal{L}_J^i]h = \frac{1}{1-\rho}\sum_{i=1}^d\E\co( v_i-V_i^i)(h(x,V^i)-h)\Psi'\po\frac{\partial U_1}{2}(v_i-V^i_i)\pf\cf\nabla\partial_i U_1\,,
\]
and
        \[[\nabla_v,\mathcal{L}_J]h = \sum_{i=1}^d\E\co\partial_i U_1(h(x,V^i)-h)\Psi'\po\frac{\partial U_1}{2}(v_i-V^i_i)\pf-2\partial_{v_i}h(x,V^i)\Psi\po\frac{\partial U_1}{2}(v_i-V^i_i)\pf\cf e_i\,.\]
Using the fact that $\Psi'$, $\nabla U_1$ and $\nabla^2 U_1$ are bounded, there exists a constant $C>0$ such that
\[|[\nabla_x,\mathcal{L}_J]h|^2\leqslant C\sum_{i=1}^d\E[(v_i-V_i^i)^2(h(x,V^i)-h)^2]\,,\]
and
\[|[\nabla_v,\mathcal{L}_J]h|^2\leqslant C\sum_{i=1}^d\E[(h(x,V^i)-h)^2+(\partial_{v_i}h(x,V^i))^2(1+(v_i-V^i_i)^2)]\,. \]
Recall from~\eqref{revers} that the law of the random variable $V^i$ is reversible with respect to the standard Gaussian measure. Therefore,
\begin{align*}
    \int_{\R^{2d}} \E\co (1+(v_i-V_i^i)^2) \po\partial_{v_i} h(x,V^i)\pf^2\cf\dd\mu &= \int_{\R^{2d}}(\partial_{v_i} h)^2\po 1+\E\co(v_i-V_i^i)^2\cf\pf\dd\mu \\
    &\leqslant \int_{\R^{2d}} (\partial_{v_i} h)^2\po 2+ v_i^2\pf\dd\mu \\
    &\leqslant 4\int_{\R^{2d}} \po(\partial_{v_i} h)^2 + (\partial_{v_i}^2 h)^2\pf \dd\mu\,,
\end{align*}
where we used Lemma~\ref{villanilemma} with $d=1$ in the last inequality.
By denoting $\Pi_{v_i}$ the projection 
\[
\Pi_{v_i} h(x,v) = \frac{1}{\sqrt{2\pi}}\int_{\R} h(x,v_1,\dots ,v_{i-1},w_i,v_{i+1},\dots,v_d)e^{-{w_i}^2/2}\dd w_i\,,
\]
we also have
\begin{align*}
    \int_{\R^{2d}} \E\co (1+(v_i-V^i_i)^2)(h(x,V^i)-h)^2 \cf \dd\mu &\leqslant 2\int_{\R^{2d}}\E\co (1+(v_i-V^i_i)^2)((h(x,V^i)-\Pi_{v_i} h)^2+(h - \Pi_{v_i} h)^2)\cf \dd\mu \\
    &\leqslant 4\int_{\R^{2d}}\po 1+\E\co(v_i-V^i_i)^2\cf\pf(h - \Pi_{v_i} h)^2 \dd\mu \\
    &\leqslant 4\int_{\R^{2d}}(2+v_i^2)(h - \Pi_{v_i} h)^2 \dd\mu \\
    &\leqslant 16\int_{\R^{2d}}(h - \Pi_{v_i} h)^2\dd\mu + 16\int_{\R^{2d}}(\partial_{v_i} h)^2\dd\mu\,,
\end{align*}
where we used again Lemma~\ref{villanilemma} with $d=1$ in the last inequality.
The Poincaré inequality~\eqref{poincaregauss} can be written
\[
\int_{\R^{2d}}(h - \Pi_{v_i} h)^2\dd\mu \leqslant \int_{\R^{2d}}|\partial_{v_i}h|^2\dd\mu\,,
\]
and thus
\[ \int_{\R^{2d}} \E\co (1+(v_i-V^i_i)^2)(h(x,V^i)-h)^2 \cf \dd\mu \leqslant 32\int_{\R^{2d}}(\partial_{v_i} h)^2\dd\mu\,.\]
Therefore, there exists $C>0$ such that
\[\int_{\R^{2d}}|[\nabla,\mathcal{L}_J]h|^2\dd\mu \leqslant C\sum_{i=1}^d\int_{\R^{2d}} ((\partial_{v_i} h)^2+|\nabla_v \partial_{v_i} h|^2)\dd\mu =  C\po\int_{\R^{2d}}|\nabla_v h|^2\dd\mu + \int_{\R^{2d}}|\nabla_v^2 h|^2\dd\mu\pf\, ,\]
which concludes the proof.
\end{proof}

We now have everything to prove Lemma~\ref{lem:initialisation}.

\begin{proof}[Proof of Lemma~\ref{lem:initialisation}]
Fix $\omega_{0,1}>\omega_{1,1}>0$ that we are going to chose later. Let's first treat the derivative of the $L^2$ norm. For all $h\in \mathcal C^{\infty}_c(\R^{2d})$, $(x,v)\in \R^{2d}$, we have
\begin{equation}\label{eq:Gamma}
    \Gamma(h)(x,v) = 2\gamma|\nabla_v h(x,v)|^2 + \sum_{i=1}^d\lambda_i(x,v)\int_{\R^d} (h(x,v')-h(x,v))^2 q_i(x,v,\dd v') \,.
\end{equation}
This already implies that
\[ \Gamma(h) \geqslant  2\gamma|\nabla_v h|^2\,. \] 
To get a lower bound on $\Gamma_{0,1}(h)$ and $\Gamma_{1,1}(h)$, we use Proposition~\ref{prop:calcul_Gamma}. Equation~\eqref{eq:Gamma} yields: 
\[
\Gamma(\na_v h ) \geqslant 2\gamma |\na_v^2 h|^2,\qquad  \Gamma((\na_x- \na_v)h) \geqslant 2\gamma|(\na_x\na_v - \na_v^2)h|^2,
\]
so that using $\omega_{0,1}>\omega_{1,1}$ and Young inequality: 
\[
\omega_{0,1}\Gamma(\na_v h ) + \omega_{1,1}\Gamma((\na_x- \na_v)h) \geqslant 2\gamma(\omega_{0,1}-\omega_{1,1})|\na_v^2 h|^2 + \omega_{1,1}\po\gamma |\na_v^2 h|^2 + 2\gamma /3|\na_x\na_v h|^2\pf.
\]
Let us now look at the commutators. Fix $\varepsilon>0$ and write using Young inequality and Lemma~\ref{lem:commutator-jump}:
\begin{align*}
    \int_{\R^{2d}} \na_vh\cdot [\mathcal L,\na_v]h \dd\mu&= \int_{\R^{2d}} \na_vh\cdot \po - \na_x h + \gamma \na_v + [\mathcal L_J,\na_v]h \pf \dd\mu \\ &\geqslant -\int_{\R^{2d}} \po \frac{1}{\varepsilon} + 1 \pf |\na_vh|^2 + \frac{\varepsilon}{2} |(\na_x-\na_v)h|^2 + \frac{\varepsilon}{2}C \po  |\nabla_{v} h|^2\dd\mu +  |\nabla_v^2 h|^2 \pf \dd\mu\,,
\end{align*}
for some $C>0$. Using again Young inequality and Lemma~\ref{lem:commutator-jump}, we get that there exists $C>0$ such that:
\begin{align*}
    \int_{\R^{2d}} (\na_x-\na_v)h\cdot [\mathcal L,\na_x-\na_v]h \dd\mu&= \int_{\R^{2d}} (\na_x-\na_v)h \cdot \po \na^2U_0\na_vh + \na_xh - \gamma \na_v h + [\mathcal L_J,\na_x-\na_v]h \pf \dd\mu \\ &\geqslant \int_{\R^{2d}} \frac{1}{2} |(\na_x - \na_v)h|^2 - C\po |\na_vh|^2 +   |\nabla_v^2 h|^2 \pf \dd\mu.
\end{align*}
Now all is left to do is to choose $\varepsilon$, $\omega_{0,1}$ and $\omega_{1,1}$. We choose them such that
\[
\frac{\omega_{1,1}}{\omega_{0,1}} \leqslant \frac{2\gamma-C\veps}{2\gamma+C},\quad 
\veps<\min\po\frac{\omega_{1,1}}{2\omega_{0,1}},\frac{2\gamma}{C}\pf,\quad C\omega_{1,1}+\omega_{0,1}\po\frac{1}{\veps} + 1 +\frac{C\veps}{2}\pf \leqslant \gamma
\]
so that 
\begin{multline*}
\int_{\R^{2d}} \Gamma(h) \dd\mu + \int_{\R^{2d}}\po\omega_{0,1}\Gamma_{0,1}(h) + \omega_{1,1}\Gamma_{1,1}(h)\pf\dd\mu  \\ \geqslant \int_{\R^{2d}} \gamma |\na_vh|^2 + \frac{\omega_{1,1}}{4}|(\na_x - \na_y)h|^2 + \omega_{1,1}\gamma \po  |\na_v^2 h|^2 + \frac{2}{3}|\na_x\na_v h|^2  \pf  \dd\mu \\ \geqslant \tilde \rho_1 \int_{\R^{2d}} \omega_{0,1}|\nabla_v h|^2 + \omega_{1,1}|(\na_x - \na_v) h|^2 + |\nabla_x\nabla_v h|^2+|\nabla_v^2 h|^2 \dd \mu  \,,
\end{multline*}
for some $\tilde \rho_1>0$. Since we consider $h$ such that $\mu(h) = 0$, the Poincaré inequality~\eqref{eq:poincare} yields that there exists $\rho_1>0$ such that 
\[
\tilde \rho_1 \int_{\R^{2d}} \omega_{0,1}|\nabla_v h|^2 + \omega_{1,1}|(\na_x - \na_v) h|^2 + |\nabla_x\nabla_v h|^2+|\nabla_v^2 h|^2 \dd \mu \geqslant \rho_1 \po \new{N}_1(h) + \int_{\R^{2d}}|\nabla_x\nabla_v h|^2+|\nabla_v^2 h|^2\dd\mu \pf  \,,
\]
which concludes the proof.
\end{proof}

\subsection{Exponential decay in \texorpdfstring{$H^k$}{Hk} }\label{s-sec:Hk-conv}

Recall that we defined the modified Sobolev norm as
\[\new{N}_k(h) := \int_{\R^{2d}} h^2 \dd\mu + \sum_{p=1}^k \int_{\R^{2d}}\po\sum_{i=0}^{p-1}\omega_{i,p}|\nabla_x^i\nabla_{v}^{p-i} h|^2 + \omega_{p,p}|(\nabla_x^p-\nabla_x^{p-1}\nabla_v)h|^2\pf\dd\mu\,.\]

In order to control its derivative~\eqref{eq:norm-derivative}, we prove by induction on $k$ the following lemma, \new{generalizing Lemma~\ref{lem:initialisation}}.

\begin{prop}\label{recurrence}
Under Assumption~\ref{assu}, there exist $(\omega_{i,p})_{1\leqslant i\leqslant p}$ such that for all $k\in\N$, there exists $\rho_k>0$ such that for all $h\in\mathcal{C}_c^{\infty}(\R^{2d},\R)$ with $\mu(h)=0$:
    \begin{equation}\label{rec}
        \int_{\R^{2d}} \Gamma(h) \dd\mu + \sum_{p=1}^k \int_{\R^{2d}}\po\sum_{i=0}^{p-1}\omega_{i,p}\Gamma_{i,p}(h) + \omega_{p,p}\Gamma_{p,p}(h)\pf\dd\mu \geqslant \rho_k\po \new{N}_k(h) + \int_{\R^{2d}}\sum_{i=0}^k|\nabla_x^i\nabla_v^{k+1-i}h|^2\dd\mu \pf. 
    \end{equation}
\end{prop}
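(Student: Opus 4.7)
The plan is to proceed by induction on $k$, the base case $k=1$ being Lemma~\ref{lem:initialisation}. Assuming~\eqref{rec} holds at level $k$, I would derive it at level $k+1$ by applying Proposition~\ref{prop:calcul_Gamma} to each of the new carr\'e du champ terms. For $i\leqslant k$, write $D_i := \nabla_x^i\nabla_v^{k+1-i}$, and set $D_{k+1} := \nabla_x^{k+1}-\nabla_x^k\nabla_v$, so that $\Gamma_{i,k+1}(h) = \Gamma(D_ih) + D_ih\cdot[\mathcal L,D_i]h$ for all $i\in\cco 0,k+1\ccf$. From the expression~\eqref{eq:Gamma} of the carr\'e du champ, each $\Gamma(D_ih)\geqslant 2\gamma|\nabla_v D_ih|^2$, which provides precisely the positive level-$(k+2)$ contributions appearing on the right-hand side of~\eqref{rec}.

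The commutator $[\mathcal L,D_i] = [\mathcal L_H,D_i]+[\mathcal L_D,D_i]+[\mathcal L_J,D_i]$ is then analyzed term by term. The diffusion commutator is a bounded scalar multiple of a derivative operator of the same form and its contribution is either nonnegative or readily absorbable by lower-weight terms. The Hamiltonian commutator is obtained by iterating $[\mathcal L_H,\nabla_x]=\nabla^2 U_0\nabla_v$ and $[\mathcal L_H,\nabla_v]=-\nabla_x$ via the Leibniz rule, producing two kinds of terms: bounded-coefficient terms involving level-$(k+1)$ derivatives of $h$ that always contain at least one $\nabla_v$, and, crucially for $D_{k+1}$, a leading $\nabla_x^{k+1}h$ arising from $[\mathcal L_H,-\nabla_x^k\nabla_v]h = \nabla_x^{k+1}h+\cdots$. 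Pairing with $D_{k+1}h$ and applying Young's inequality yields a lower bound of order $\omega_{k+1,k+1}\int|\nabla_x^{k+1}h|^2\dd\mu$, minus errors absorbable by the weights $\omega_{i,k+1}$ for $i\leqslant k$ and by the induction hypothesis.

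The technical heart of the argument, and the main obstacle, is a generalization of Lemma~\ref{lem:commutator-jump} to higher-order derivatives. Expanding $[D_i,\mathcal L_J]h$ via the Leibniz rule, every resulting term is an expectation involving bounded derivatives of $U_1$ and $\Psi$ multiplied by derivatives of $h$ of total order at most $k+1$ evaluated at $(x,V^j)$ and $(x,v)$. Using the reversibility~\eqref{revers} of $V^j$, Lemma~\ref{villanilemma} and the Gaussian Poincar\'e inequality~\eqref{poincaregauss} applied coordinate-wise in $v_j$ exactly as in the proof of Lemma~\ref{lem:commutator-jump}, one should obtain
\[\int_{\R^{2d}}|[D_i,\mathcal L_J]h|^2\dd\mu \leqslant C\sum_{\substack{|\alpha|+|\beta|\leqslant k+1 \\ |\beta|\geqslant 1}}\int_{\R^{2d}}|\nabla_x^\alpha\nabla_v^\beta h|^2\dd\mu\,.\]
The essential feature is that every term on the right contains at least one $\nabla_v$: the top-order terms ($|\alpha|+|\beta|=k+1$) are absorbed into the positive $|\nabla_v D_jh|^2$ produced by the carr\'e du champ via a small Young inequality, while all lower-order terms fall into $N_k(h)$ or into the level-$(k+1)$ Sobolev semi-norms covered by the induction hypothesis.

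The proof concludes by a hierarchical choice of the weights at level $k+1$. I would first fix $\omega_{k+1,k+1}$ moderate so that the $|\nabla_x^{k+1}h|^2$ coercivity dominates its own error terms, and then choose $\omega_{i,k+1}$ for $i=k,k-1,\ldots,0$ successively large enough to absorb the Hamiltonian error terms involving $\nabla_x^{i+1}\nabla_v^{k-i}h$ generated by $[\mathcal L_H,D_j]$ with $j>i$ together with the jump residues from the previous step. Combined with the induction hypothesis, which provides coercivity on $N_k(h)$ and on all level-$(k+1)$ Sobolev semi-norms except the purely-$x$ one, this yields~\eqref{rec} at level $k+1$ for some $\rho_{k+1}>0$. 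The bootstrap on the last level-$(k+2)$ derivative $|\nabla_x^{k+1}\nabla_v h|^2$ is extracted from $\Gamma(D_{k+1}h)\geqslant 2\gamma|\nabla_x^{k+1}\nabla_v h - \nabla_x^k\nabla_v^2h|^2$ and Young's inequality, using that $|\nabla_x^k\nabla_v^2h|^2$ is already produced by the $i=k$ contribution.
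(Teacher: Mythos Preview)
Your overall scaffolding (induction from Lemma~\ref{lem:initialisation}, decomposition via Proposition~\ref{prop:calcul_Gamma}, separate treatment of the three commutators, use of $\Gamma(D_ih)\geqslant 2\gamma|\nabla_v D_ih|^2$ for the level-$(k+2)$ terms, and the extraction of $|\nabla_x^{k+1}\nabla_v h|^2$ from $\Gamma(D_{k+1}h)$) matches the paper. There are however two genuine problems.

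First, your bound on the jump commutator is too optimistic. You claim the right-hand side involves only derivatives of total order $\leqslant k+1$, but this is already false for $k=0$: Lemma~\ref{lem:commutator-jump} produces a $|\nabla_v^2 h|^2$ term, because controlling the factors $|v_i-V_i^i|^{2n}$ via Lemma~\ref{villanilemma} (and its higher-order analogue, Lemma~\ref{lem:generalized-vil} in the paper) costs an extra $\partial_{v_i}$. In general $\int|[D_i,\mathcal L_J]h|^2\dd\mu$ contains a term $C\int|\nabla_x^i\nabla_v^{k+2-i}h|^2\dd\mu$ of order $k+2$ (see Lemma~\ref{lem:commutator-jump-Hk}). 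This is not fatal, since that term is exactly what $\Gamma(D_ih)$ provides, but it must be tracked and absorbed with a small Young parameter.

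Second, and more seriously, your weight hierarchy goes in the wrong direction. You propose to take $\omega_{i,k+1}$ \emph{large} so that the new level-$(k+1)$ terms themselves absorb the Hamiltonian and jump errors. But the only positive level-$(k+1)$ contribution coming from $\omega_{i,k+1}\Gamma_{i,k+1}(h)$ is the diffusion commutator $\gamma(k+1-i)\omega_{i,k+1}|D_ih|^2$, whose coefficient vanishes as $i\to k+1$; it cannot control the errors $-C(1+1/\eta)|\nabla_x^k\nabla_v h|^2$ and $-\tfrac{1}{\varepsilon}|D_ih|^2$ near the top of the range, nor does it touch $|\nabla_x^{i+1}\nabla_v^{k-i}h|^2$ (wrong index). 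The mechanism that actually works is the one encoded in the statement of~\eqref{rec}: the induction hypothesis already supplies a fixed amount $\rho_k\sum_{i=0}^{k}\int|\nabla_x^i\nabla_v^{k+1-i}h|^2\dd\mu$ of level-$(k+1)$ coercivity, and one must take all the new weights $\omega_{i,k+1}$ \emph{small} (of order $\rho_k$ over suitable constants, with $\omega_{k+1,k+1}\ll\omega_{k,k+1}$) so that the errors they generate are dominated by this fixed $\rho_k$-budget. The missing $|(\nabla_x^{k+1}-\nabla_x^k\nabla_v)h|^2$ piece is then produced by the Hamiltonian commutator of $D_{k+1}$, exactly as you describe. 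This is how the paper closes the induction; reversing the direction of the weight choice breaks the argument.
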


To prove this lemma, we will need formulas for partial derivatives of products, provided by the Leibniz formula 

\begin{prop}\label{prop:leibniz}
For all $g,h\in\mathcal C_c^{\infty}$ and $\alpha_1,\alpha_2\in\N^{d}$,
\[\partial_x^{\alpha_1}\partial_v^{\alpha_2}(gh) = \sum_{\nu_1\leq\alpha_1,\nu_2\leq\alpha_2}\binom{\alpha_1}{\nu_1}\binom{\alpha_2}{\nu_2}(\partial_x^{\nu_1}\partial_v^{\nu_2} g)(\partial_x^{\alpha_1-\nu_1}\partial_v^{\alpha_2-\nu_2} h)\]
where for $\alpha=(\alpha_1,\dots,\alpha_d)$ and $\nu = (\nu_1,\dots,\nu_d)$,
\[\binom{\alpha}{\nu} = \prod_{i=1}^d \binom{\alpha_i}{\nu_i}\,,\]
and
\[\nu\leq\alpha \iff \forall i\in\cco 1,d\ccf, \nu_i\leq\alpha_i.\]
\end{prop}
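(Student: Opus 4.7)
The plan is to reduce the multi-variable statement to the classical one-variable Leibniz formula, and then iterate it across each coordinate direction. This is a standard inductive argument and there is no real analytical content, only bookkeeping.

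First I would establish the one-dimensional Leibniz formula: for any $f,g\in\mathcal C^\infty(\R,\R)$ and any $n\in\N$,
\[
(fg)^{(n)} = \sum_{k=0}^{n} \binom{n}{k} f^{(k)} g^{(n-k)}\,.
\]
This is proved by induction on $n$, where the base case $n=0$ is trivial and the inductive step follows from applying the ordinary product rule $(fg)' = f'g + fg'$ to the inductive hypothesis and using Pascal's identity $\binom{n}{k-1}+\binom{n}{k} = \binom{n+1}{k}$. All of this is completely classical.

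Next, I would lift this to the multi-variable setting coordinate-by-coordinate. Writing $\partial_x^{\alpha_1}\partial_v^{\alpha_2} = \partial_{x_1}^{(\alpha_1)_1}\cdots\partial_{x_d}^{(\alpha_1)_d}\partial_{v_1}^{(\alpha_2)_1}\cdots\partial_{v_d}^{(\alpha_2)_d}$ (the partial derivatives commute because $g,h\in\mathcal C^\infty$), I would apply the one-variable Leibniz formula successively in each direction. A clean way to formalise this is by induction on $|\alpha_1|+|\alpha_2|$: if the formula holds for a given multi-index pair, pick any coordinate direction $\ell$ (say in $x$) and write $\partial_{x_\ell}\partial_x^{\alpha_1}\partial_v^{\alpha_2}(gh)$; expanding the outermost $\partial_{x_\ell}$ with the product rule on each summand of the inductive hypothesis and re-indexing with Pascal's identity gives the formula for $(\alpha_1+e_\ell,\alpha_2)$. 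The same works symmetrically for a direction in $v$.

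The reason the multi-index binomial coefficient factorises as $\binom{\alpha}{\nu} = \prod_i \binom{\alpha_i}{\nu_i}$ is that, in the iterative argument, each coordinate contributes an independent one-dimensional binomial coefficient, and these multiply. There is no real obstacle; the only thing to be slightly careful about is the notational convention that $\nu\leq\alpha$ means component-wise inequality, so that the sum ranges exactly over the product of the one-dimensional index sets $\{0,\dots,\alpha_i\}$. Since all derivatives involved are of smooth functions, no convergence or regularity issue arises and the proof is purely combinatorial.
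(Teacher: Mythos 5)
Your proof is correct. The paper states Proposition~\ref{prop:leibniz} without proof, treating it as the classical Leibniz formula, and your argument — the one-dimensional case by induction via Pascal's identity, then lifting coordinate-by-coordinate using commutativity of partial derivatives of smooth functions — is the standard and complete way to establish it.
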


For any $\alpha\in\N^{d}$ we will also denote $|\alpha| = \sum_{i=1}^d\alpha_i$.
Let us look at the commutators. As in the $H^1$ case, we will look separately the parts associated to the Hamiltonian dynamic $\mathcal{L}_H$, the diffusion $\mathcal{L}_D$ and the jump process $\mathcal{L}_J$.
\begin{lem}[Hamiltonian commutators]\label{lem:commutator-hamiltonian}
Let $i\in\cco 0,k\ccf$. There exists $C>0$ such that the following holds. If $i<k$, we have
\begin{equation*}
        \nabla_x^i\nabla_v^{k+1-i}h\cdot[\mathcal{L}_H,\nabla_x^i\nabla_v^{k+1-i}]h \geqslant  -|\nabla_x^i\nabla_v^{k+1-i}h|^2 -C\po|\nabla_x^{i+1}\nabla_v^{k-i} h|^2 +\sum_{j=0}^{i-1}|\nabla_x^j\nabla_v^{k+2-i}h|^2\pf\,.
\end{equation*}
In the case $i=k$, for all $\eta>0$, we have
\begin{equation*}
      \nabla_x^{k}\nabla_vh\cdot[\mathcal{L}_H,\nabla_x^{k}\nabla_v]h \geqslant -\frac{\eta}{8}|(\nabla_x^{k+1}-\nabla_x^{k}\nabla_v)h|^2 - C\po 1+\frac{1}{\eta}\pf|\nabla_x^{k}\nabla_v h|^2 - C \sum_{j=0}^{k-1}|\nabla_x^j\nabla_v^2 h|^2\,.
\end{equation*}
Finally, we have
\begin{equation*}
        (\nabla_x^{k+1} - \nabla_x^{k}\nabla_v)h\cdot [\mathcal{L}_{H},\nabla_x^{k+1} - \nabla_x^{k}\nabla_v]h
        \geqslant \frac{1}{2}|(\nabla_x^{k+1}-\nabla_x^{k}\nabla_v)h|^2 -C\po\sum_{j=0}^{k}|\nabla_x^j\nabla_v h|^2  + \sum_{j=0}^{k-1}|\nabla_x^j\nabla_v^2 h|^2 \pf\,.
\end{equation*}
\end{lem}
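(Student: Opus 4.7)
The plan is to reduce everything to Young's inequality applied to an explicit formula for $[\mathcal L_H,\nabla_x^i\nabla_v^j]$. Starting from the basic commutators $[\mathcal L_H,\nabla_x]=\nabla^2 U_0\,\nabla_v$ and $[\mathcal L_H,\nabla_v]=-\nabla_x$, which follow from a direct test-function computation using $\mathcal L_H=v\cdot\nabla_x-\nabla U_0\cdot\nabla_v$, the Leibniz rule $[\mathcal L_H,AB]=[\mathcal L_H,A]B+A[\mathcal L_H,B]$ together with the commutation of $\nabla_x$ and $\nabla_v$ yields by induction, for all $i,j\geqslant 0$,
\[
[\mathcal L_H,\nabla_x^i\nabla_v^{j}]h=-j\,\nabla_x^{i+1}\nabla_v^{j-1}h+\sum_{a=0}^{i-1}B_a^{(i)}(x)\,\nabla_x^a\nabla_v^{j+1}h,
\]
where the $B_a^{(i)}$ are smooth and uniformly bounded functions of $x$ built by further Leibniz expansions from derivatives of $\nabla^2 U_0$, hence bounded by Assumption~\ref{assu}. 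Crucially, when $i+j=k+1$ every term on the right has total order $k+1$ and belongs to one of the families appearing either in $N_k$ or in the order-$(k+1)$ dissipation sum on the right-hand side of~\eqref{rec}.

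For item one, apply this master formula at $(i,j)=(i,k+1-i)$ with $i<k$, pair with $\nabla_x^i\nabla_v^{k+1-i}h$, and use Young's inequality $2|a\cdot b|\leqslant|a|^2+|b|^2$: the principal contribution is bounded below by $-\tfrac12|\nabla_x^i\nabla_v^{k+1-i}h|^2-C|\nabla_x^{i+1}\nabla_v^{k-i}h|^2$ and each correction by $-\tfrac{1}{2i}|\nabla_x^i\nabla_v^{k+1-i}h|^2-C|\nabla_x^a\nabla_v^{k+2-i}h|^2$, and summing yields the stated bound. For item two, at $(i,j)=(k,1)$ the principal part is $-\nabla_x^{k+1}h$, which is not controlled by $N_k$. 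The key rewriting
\[\nabla_x^{k+1}h=(\nabla_x^{k+1}-\nabla_x^k\nabla_v)h+\nabla_x^k\nabla_v h\]
splits the principal contribution into a piece involving the modified operator, on which Young's inequality with free parameter $\eta$ yields $-\tfrac{\eta}{8}|(\nabla_x^{k+1}-\nabla_x^k\nabla_v)h|^2-\tfrac{2}{\eta}|\nabla_x^k\nabla_v h|^2$, and an exact $-|\nabla_x^k\nabla_v h|^2$ from the second summand. A standard Young step on the correction terms (whose coefficients $B_a^{(k)}$ are uniformly bounded) then produces the claim, with $C$ collecting $2/\eta$, the $B_a^{(k)}$ bounds, and the constant $1$.

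For item three, write $[\mathcal L_H,\nabla_x^{k+1}-\nabla_x^k\nabla_v]=[\mathcal L_H,\nabla_x^{k+1}]-[\mathcal L_H,\nabla_x^k\nabla_v]$. The master formula at $(k+1,0)$ produces only correction terms $\sum_{a\leqslant k}\tilde B_a(x)\nabla_x^a\nabla_v h$ (the principal part vanishes since $j=0$), while subtracting $[\mathcal L_H,\nabla_x^k\nabla_v]$ brings in a positive $+\nabla_x^{k+1}h$ together with corrections $\nabla_x^a\nabla_v^2 h$ for $a\leqslant k-1$. Using once more the decomposition $\nabla_x^{k+1}h=(\nabla_x^{k+1}-\nabla_x^k\nabla_v)h+\nabla_x^k\nabla_v h$ and pairing with $(\nabla_x^{k+1}-\nabla_x^k\nabla_v)h$ produces the positive leading term $|(\nabla_x^{k+1}-\nabla_x^k\nabla_v)h|^2$; Young's inequality absorbs half of it against all remaining cross terms, leaving $\tfrac12|(\nabla_x^{k+1}-\nabla_x^k\nabla_v)h|^2$ at the cost of $-C\sum_{j\leqslant k}|\nabla_x^j\nabla_v h|^2-C\sum_{j\leqslant k-1}|\nabla_x^j\nabla_v^2 h|^2$. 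The main obstacle throughout is the combinatorial bookkeeping in the master formula — checking that every lower-order remainder lands in a family already controlled by $N_k$ or by the order-$(k+1)$ dissipation sum — while the key technical trick is the modified-norm substitution that rescues the otherwise uncontrolled $\nabla_x^{k+1}h$ in items two and three.
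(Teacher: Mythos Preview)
Your proof is correct and follows essentially the same route as the paper. The paper derives the commutator formula by a direct Leibniz expansion of $\partial_x^{\alpha_1}\partial_v^{\alpha_2}\mathcal L_H h$, while you obtain the equivalent ``master formula'' by iterating the basic commutators $[\mathcal L_H,\nabla_x]$ and $[\mathcal L_H,\nabla_v]$; after that, both arguments use the same Young estimates and the same key splitting $\nabla_x^{k+1}h=(\nabla_x^{k+1}-\nabla_x^k\nabla_v)h+\nabla_x^k\nabla_v h$ to handle items two and three.
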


\begin{proof}
For all $\alpha_1,\alpha_2 \in \N^d$ with $|\alpha_1|=i$ and $|\alpha_2|=k+1-i$, where $\alpha_2 = (\alpha_{2,1},\dots,\alpha_{2,d})$, write 
\[\alpha_2^j = (\alpha_{2,1},\dots,\alpha_{2,j-1},\alpha_{2,j}-1,\alpha_{2,j+1},\dots,\alpha_{2,d}),\]
and we have
\begin{equation*}
    \begin{split}
        \partial_x^{\alpha_1}\partial_v^{\alpha_2}\mathcal{L}_H h &= \sum_{j=1}^d \partial_x^{\alpha_1}\partial_v^{\alpha_2}\po v_j\partial_{x_j}h-\partial_j U_0\partial_{v_j}h\pf\\
        &= \sum_{j=1}^d \partial_v^{\alpha_2}(v_j\partial_x^{\alpha_1}\partial_{x_j} h) - \partial_x^{\alpha_1}(\partial_j U_0\partial_v^{\alpha_2}\partial_{v_j}h) \\
        &= \sum_{j=1}^d\po\sum_{\nu_2\leqslant\alpha_2}\binom{\alpha_2}{\nu_2}\partial_v^{\nu_2}(v_j)\partial_v^{\alpha_2-\nu_2}\partial_x^{\alpha_1}\partial_{x_j} h-\sum_{\nu_1\leqslant\alpha_1}\binom{\alpha_1}{\nu_1}\partial_x^{\nu_1}\partial_j U_0\partial_x^{\alpha_1-\nu_1}\partial_v^{\alpha_2}\partial_{v_j}h\pf \\
        &= \sum_{j=1}^d \po\alpha_{2,j}\partial_v^{\alpha_2^j}\partial_x^{\alpha_1}\partial_{x_j} h + v_j\partial_v^{\alpha_2}\partial_x^{\alpha_1}\partial_{x_j} h - \sum_{\nu_1\leqslant\alpha_1}\binom{\alpha_1}{\nu_1}\partial_x^{\nu_1}\partial_j U_0\partial_x^{\alpha_1-\nu_1}\partial_v^{\alpha_2}\partial_{v_j}h\pf\,.
    \end{split}
\end{equation*}
Therefore,
\[
[\mathcal{L}_H,\partial_x^{\alpha_1}\partial_v^{\alpha_2}] h = \sum_{j=1}^d\po \sum_{\nu_1\leqslant\alpha_1;\nu_1\neq 0}\binom{\alpha_1}{\nu_1}\partial_x^{\nu_1}\partial_j U_0\partial_x^{\alpha_1-\nu_1}\partial_v^{\alpha_2}\partial_{v_j}h - \alpha_{2,j}\partial_v^{\alpha_2^j}\partial_x^{\alpha_1}\partial_{x_j} h \pf\,.
\]
Using the bound on the derivatives of $U_0$, this yields that there exists a $C>0$ such that
\begin{equation*}
    \nabla_x^i\nabla_v^{k+1-i}h\cdot[\mathcal{L}_H\nabla_x^i\nabla_v^{k+1-i}]h \geqslant -  |\nabla_x^i\nabla_v^{k+1-i}h|^2 - C\po |\nabla_x^{i+1}\nabla_v^{k-i} h|^2 + \sum_{j=0}^{i-1}|\nabla_x^j\nabla_v^{k+2-i}h|^2 \pf.
\end{equation*}
If $i=k$, then for any $\eta>0$, using the Young inequality
\begin{equation*}
    \begin{split}
        \sum_{\underset{j\in\cco 1,d\ccf}{|\alpha_1|=k;|\alpha_2|=1}} (\partial_x^{\alpha_1}\partial_v^{\alpha_2}h)(\alpha_{2,j}\partial_v^{\alpha_2^j}\partial_x^{\alpha_1}\partial_{x_j} h) &= \sum_{|\alpha_1|=k;j\in\cco 1,d\ccf } (\partial_x^{\alpha_1}\partial_{v_j}h)(\partial_x^{\alpha_1}\partial_{x_j} h) \\
        &= \sum_{|\alpha_1|=k;j\in\cco 1,d\ccf } (\partial_x^{\alpha_1}\partial_{v_j}h)(\partial_x^{\alpha_1}\partial_{x_j} - \partial_x^{\alpha_1}\partial_{v_j}+\partial_x^{\alpha_1}\partial_{v_j}) h \\
        &\leqslant \po 1+\frac{2}{\eta}\pf|\nabla_x^{k}\nabla_v h|^2 + \frac{\eta}{8}|(\nabla_x^{k+1}-\nabla_x^{k}\nabla_v)h|^2\,.
    \end{split}
\end{equation*}
which again implies that 
\begin{equation*}
      \nabla_x^{k}\nabla_v h\cdot[\mathcal{L}_H, \nabla_x^{k}\nabla_v]h \geqslant  -C\po 1+\frac{1}{\eta}\pf|\nabla_x^{k}\nabla_v h|^2 - \frac{\eta}{8}|(\nabla_x^{k+1}-\nabla_x^{k}\nabla_v)h|^2 - C \sum_{j=0}^{k-1}|\nabla_x^j\nabla_v^2 h|^2\,.
\end{equation*}
Similarly, for $\beta$ a multi-index such that $|\beta|=k$ and $i\in\cco 1,d\ccf$,
\begin{multline*}
        [\mathcal{L}_H,\partial_x^\beta\partial_{x_i}-\partial_x^\beta\partial_{v_i}] h = \partial_x^{\beta}\partial_{x_i}h +\sum_{j=1}^d\sum_{\nu\leqslant\beta}\binom{\beta}{\nu}\partial^{\nu}\partial_{i,j} U_0\partial_x^{\beta-\nu}\partial_{v_j}h \\  +\sum_{j=1}^d\sum_{\nu\leqslant\beta;\nu\neq 0}\binom{\beta}{\nu}\partial^{\nu}\partial_j U_0\partial_x^{\beta-\nu}\partial_{x_i}\partial_{v_j}h  -\sum_{j=1}^d\sum_{\nu\leqslant\beta;\nu\neq 0}\binom{\beta}{\nu}(\partial^{\nu}\partial_j U_0)( \partial_x^{\beta-\nu}\partial_{v_i}\partial_{v_j}h)\,,
\end{multline*}
and thus, by applying Young inequality,
\begin{equation*}
    \begin{split}
        (\nabla_x^{k+1} - \nabla_x^{k}\nabla_v)h\cdot [\mathcal{L}_{H},\nabla_x^{k+1} - \nabla_x^{k}\nabla_v]h \geqslant  \frac{1}{2}|(\nabla_x^{k+1}-\nabla_x^{k}\nabla_v)h|^2 - C \po\sum_{j=0}^{k}|\nabla_x^j\nabla_v h|^2 + \sum_{j=0}^{k-1}|\nabla_x^j\nabla_v^2 h|^2 \pf\,,
    \end{split}
\end{equation*}
which concludes the proof.
\end{proof}

Regarding the diffusion part, we have
\begin{lem}\label{lem:commutators-diffusion}
For $i\in\cco 0,k\ccf$, 
\[ \nabla_x^i\nabla_v^{k+1-i}h\cdot[\mathcal{L}_D,\nabla_x^i\nabla_v^{k+1-i}]h = \gamma(k+1-i)|\nabla_x^i\nabla_v^{k+1-i}h|^2\,,\]
and there exists $C>0$ such that 
\begin{equation*}
      (\nabla_x^{k+1}-\nabla_x^{k}\nabla_v)h\cdot[\mathcal{L}_D, \nabla_x^{k+1}-\nabla_x^{k}\nabla_v]h
      \geqslant -\frac{1}{8}|(\nabla_x^{k+1}-\nabla_x^{k}\nabla_v)h|^2 -  C|\nabla_x^{k}\nabla_v h|^2 \, .
\end{equation*}
\end{lem}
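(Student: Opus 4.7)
The strategy is purely computational: reduce both identities to the elementary commutators of $\mathcal{L}_D$ with a single partial derivative and then iterate. No ergodicity or integration by parts is needed here — the whole statement is pointwise.

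First I would record the two basic commutators. Since $\mathcal{L}_D = -\gamma v\cdot\nabla_v + \gamma\Delta_v$ involves only derivatives in $v$ and multiplication by components of $v$, one has $[\mathcal{L}_D,\partial_{x_j}] = 0$ for every $j$. On the $v$ side, $[v\cdot\nabla_v,\partial_{v_j}] = -\partial_{v_j}$ and $[\Delta_v,\partial_{v_j}] = 0$, so $[\mathcal{L}_D,\partial_{v_j}] = \gamma\partial_{v_j}$. Then, using the Leibniz-type identity $[\mathcal{L}_D,AB] = [\mathcal{L}_D,A]B + A[\mathcal{L}_D,B]$, an immediate induction on $|\alpha_2|$ yields, for any multi-indices $\alpha_1,\alpha_2\in\N^d$,
\[
[\mathcal{L}_D,\partial_x^{\alpha_1}\partial_v^{\alpha_2}] = |\alpha_2|\,\gamma\,\partial_x^{\alpha_1}\partial_v^{\alpha_2}.
\]

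For the first claim, I would apply this with $|\alpha_1|=i$ and $|\alpha_2|=k+1-i$ componentwise, which gives $[\mathcal{L}_D,\nabla_x^i\nabla_v^{k+1-i}]h = (k+1-i)\gamma\,\nabla_x^i\nabla_v^{k+1-i}h$, and taking the Euclidean inner product with $\nabla_x^i\nabla_v^{k+1-i}h$ yields exactly $\gamma(k+1-i)|\nabla_x^i\nabla_v^{k+1-i}h|^2$.

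For the second claim, the computation above gives $[\mathcal{L}_D,\nabla_x^{k+1}]h = 0$ and $[\mathcal{L}_D,\nabla_x^k\nabla_v]h = \gamma\,\nabla_x^k\nabla_v h$, hence by linearity
\[
[\mathcal{L}_D,\nabla_x^{k+1}-\nabla_x^k\nabla_v]h = -\gamma\,\nabla_x^k\nabla_v h.
\]
Taking the inner product with $(\nabla_x^{k+1}-\nabla_x^k\nabla_v)h$ and applying Young's inequality $|\gamma ab|\leqslant \tfrac18 a^2 + 2\gamma^2 b^2$ (with $a=|(\nabla_x^{k+1}-\nabla_x^k\nabla_v)h|$ and $b=|\nabla_x^k\nabla_v h|$, combined with Cauchy--Schwarz) yields the announced bound with $C=2\gamma^2$.

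There is no real obstacle: the diffusion operator is of Ornstein--Uhlenbeck type in $v$ and its commutator structure is trivial, so the only care needed is the bookkeeping of multi-indices to pass from the elementary commutators to the full tensorial statement. The constant $1/8$ is not sharp and is chosen to match the Young split that will be balanced later against the corresponding $1/8$ coming from the Hamiltonian commutator in Lemma~\ref{lem:commutator-hamiltonian} when proving Proposition~\ref{recurrence}.
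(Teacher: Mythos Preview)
Your proof is correct and follows essentially the same approach as the paper: compute the commutators $[\mathcal{L}_D,\partial_x^{\alpha_1}\partial_v^{\alpha_2}]$ directly (you do it via the elementary commutators $[\mathcal{L}_D,\partial_{v_j}]=\gamma\partial_{v_j}$ and the Leibniz rule for commutators, the paper by expanding the product rule term by term), then apply Cauchy--Schwarz and Young's inequality for the second estimate. Your explicit constant $C=2\gamma^2$ is fine, and your sign $[\mathcal{L}_D,\nabla_x^{k+1}-\nabla_x^k\nabla_v]h=-\gamma\nabla_x^k\nabla_v h$ is in fact the correct one (the paper's displayed formula has a harmless sign typo that does not affect the final lower bound).
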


\begin{proof}
We have, for all $\alpha_1,\alpha_2\in\N^d$,
\begin{equation*}
    \begin{split}
        \partial_x^{\alpha_1}\partial_v^{\alpha_2}\mathcal{L}_Dh(x,v) &= -\gamma\sum_{j=1}^d \partial_v^{\alpha_2} (v_j\partial_x^{\alpha_1}\partial_{v_j}h) + \gamma\sum_{j=1}^d\partial_x^{\alpha_1}\partial_v^{\alpha_2}(\partial_{v_j^2}^2h) \\
        &= -\gamma\sum_{j=1}^d v_j\partial_v^{\alpha_2}\partial_x^{\alpha_1}\partial_{v_j}h-\gamma\sum_{j=1}^d \alpha_{2,j}\partial_v^{\alpha_2^j}\partial_x^{\alpha_1}\partial_{v_j}h+ \gamma\sum_{j=1}^d\partial_x^{\alpha_1}\partial_v^{\alpha_2}(\partial_{v_j^2}^2h).
    \end{split}
\end{equation*}
Therefore, if $|\alpha_1|=i$ and $|\alpha_2| = k+1-i$, we have
    \[[\mathcal{L}_D,\partial_x^{\alpha_1}\partial_v^{\alpha_2}]h(x,v) = \gamma\sum_{j=1}^d \alpha_{2,j}\partial_v^{\alpha_2}\partial_x^{\alpha_1}h = \gamma (k+1-i)\partial_x^{\alpha_1}\partial_v^{\alpha_2}h\,,\]
and
\[ \nabla_x^i\nabla_v^{k+1-i}h\cdot[\mathcal{L}_D,\nabla_x^i\nabla_v^{k+1-i}]h = \gamma(k+1-i)|\nabla_x^i\nabla_v^{k+1-i}h|^2.\]
Similarly, for all $\beta\in\N^d$ and $i\in\cco 1,d\ccf$,
\begin{equation*}
    \begin{split}
        (\partial_x^\beta\partial_{x_i}-\partial_x^\beta\partial_{v_i})\mathcal{L}_D h(x,v) &= -\gamma\sum_{j=1}^d v_i\partial_x^{\beta}\partial_{x_i}\partial_{v_j}h + \gamma\sum_{j=1}^d\partial_{v_i}(v_j\partial_x^{\beta}\partial_{v_j}h)+\gamma\sum_{j=1}^d\partial_x^{\beta}\partial_{x_i}\partial_{v_j^2}^2h-\gamma\sum_{j=1}^d\partial_x^{\beta}\partial_{v_i}\partial_{v_j^2}^2h\\
        &= -\gamma\sum_{j=1}^d v_j\partial_x^{\beta}\partial_{x_i}\partial_{v_j}h + \gamma\sum_{j=1}^dv_j\partial_{v_i}\partial_x^{\beta}\partial_{v_j}h+\gamma\partial_x^{\beta}\partial_{v_i}h \\ &\qquad +\gamma\sum_{j=1}^d\partial_x^{\beta}\partial_{x_i}\partial_{v_j^2}^2h-\gamma\sum_{j=1}^d\partial_x^{\beta}\partial_{v_i}\partial_{v_j^2}^2h\,.
    \end{split}
\end{equation*}
Therefore, 
\[ [\mathcal{L}_D,\partial_x^\beta\partial_{x_i}-\partial_x^\beta\partial_{v_i}] h(x,v) = \gamma\partial_x^{\beta}\partial_{v_i}h\,,\]
and finally, by applying Young inequality, we get
\begin{multline*}
    \new{(\nabla_x^{k+1}-\nabla_x^{k}\nabla_v)}h\cdot[\mathcal{L}_D,\nabla_x^{k+1}-\nabla_x^{k}\nabla_v]h = \gamma\sum_{|\beta|=k;j\in\cco 1,d\ccf} (\partial_x^\beta\partial_{x_j}-\partial_x^\beta\partial_{v_j})h\partial_x^{\beta}\partial_{v_j}h  \\ \geqslant \new{-\frac{1}{8}}|(\nabla_x^{k+1}-\nabla_x^{k}\nabla_v)h|^2 -  C|\nabla_x^{k}\nabla_v h|^2\,,
\end{multline*}
for some $C>0$.
\end{proof}

Finally, let us look at the jump part.
\begin{lem}\label{lem:commutator-jump-Hk} 
For all $k\in \N$, there exists $C>0$ such that for $i\in\cco 0,k\ccf$ and $\veps>0$,
\begin{multline*}
    \int_{\R^{2d}}\nabla_x^i\nabla_v^{k+1-i}h \cdot [\mathcal{L}_J,\nabla_x^i\nabla_v^{k+1-i}]h\dd\mu
    \geqslant -\frac{1}{\veps}\int_{\R^{2d}}|\nabla_x^i\nabla_v^{k+1-i}h|^2\dd\mu \\ -  C\veps\int_{\R^{2d}}\po\sum_{\underset{l\leqslant k+1-j}{j\leqslant i}}|\nabla_x^j\nabla_v^l h|^2+|\nabla_x^i\nabla_v^{k+2-i} h|^2\pf\dd\mu\,.
\end{multline*}
and
\begin{multline*}
\int_{\R^{2d}}(\nabla_x^{k+1}-\nabla_k^{k}\nabla_v h)\cdot [\mathcal{L}_J,\nabla_x^{k+1}-\nabla_x^{k}\nabla_v]h\dd\mu \geqslant  -\frac{1}{8}\int_{\R^{2d}}|\new{\po\nabla_x^{k+1}-\nabla_x^{k}\nabla_v\pf} h|^2\dd\mu \\ - C\po\int_{\R^{2d}}\sum_{\underset{j\leqslant k+1-i}{i\leqslant k}}|\nabla_x^i\nabla_v^{j}h|^2\dd\mu + \int_{\R^{2d}}|\nabla_x^{k}\nabla_v^{2} h|^2\dd\mu\pf\,.
\end{multline*}
\end{lem}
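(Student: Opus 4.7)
We adapt the $H^1$ argument of Lemma~\ref{lem:commutator-jump}. The plan is to expand $\partial_x^{\alpha_1}\partial_v^{\alpha_2}\mathcal{L}_J^j h$ for multi-indices with $|\alpha_1|=i$, $|\alpha_2|=k+1-i$ via the Leibniz rule of Proposition~\ref{prop:leibniz}, subtract $\mathcal{L}_J^j(\partial_x^{\alpha_1}\partial_v^{\alpha_2}h)$ to isolate the commutator, and estimate each resulting term using the reversibility identity~\eqref{revers} for the Gaussian kernel, the Gaussian Poincaré inequality~\eqref{poincaregauss}, and Lemma~\ref{villanilemma} to absorb the polynomial growth induced by the factor $v_j-V_j^j$.

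A crucial structural observation is that, since $V_j^j=\rho v_j+\sqrt{1-\rho^2}\xi$ and $V_\ell^j=v_\ell$ for $\ell\neq j$, the chain rule yields $\partial_v^{\alpha_2}h(x,V^j)=\rho^{\alpha_{2,j}}(\partial_v^{\alpha_2}h)(x,V^j)$, so the ``fully-differentiated'' Leibniz contribution differs from $\mathcal{L}_J^j(\partial_x^{\alpha_1}\partial_v^{\alpha_2}h)$ by a residual of the form $\frac{2}{1-\rho}(\rho^{\alpha_{2,j}}-1)\E\co(\partial_x^{\alpha_1}\partial_v^{\alpha_2}h)(x,V^j)\Psi(\cdot)\cf$, together with further terms coming from differentiating the $\Psi$-factor. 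Pairing this residual with $\partial_x^{\alpha_1}\partial_v^{\alpha_2}h$ and integrating against $\mu$, we would then apply Cauchy--Schwarz, use~\eqref{revers} to turn the $V^j$-evaluation into an $L^2(\mu)$-norm of $\partial^\alpha h$, and invoke Young's inequality $ab\geqslant-\frac{1}{\veps}a^2-\veps b^2$ to produce the two top-order terms of the claim: $-\frac{1}{\veps}\int|\nabla_x^i\nabla_v^{k+1-i}h|^2\dd\mu$ and, after Lemma~\ref{villanilemma} handles the polynomial factor $\partial_v\Psi(\cdot)\propto v_j-V_j^j$, $-C\veps\int|\nabla_x^i\nabla_v^{k+2-i}h|^2\dd\mu$. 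All other Leibniz terms involve strictly lower-order derivatives $\partial_x^{\beta_1}\partial_v^{\beta_2}h$ with $|\beta_1|+|\beta_2|<k+1$, times bounded coefficients (all derivatives of $\partial_j U_1$ and of $\Psi$ are bounded under Assumption~\ref{assu}) and possibly a polynomial in $v_j-V_j^j$; the same machinery bounds them below by $-C\veps\int\sum_{j'\leqslant i,\,\ell\leqslant k+1-j'}|\nabla_x^{j'}\nabla_v^\ell h|^2\dd\mu$. Summing over $j\in\cco 1,d\ccf$ yields the first inequality.

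For the second inequality, the plan is to perform the same expansion with $\partial_x^\beta\partial_{x_\ell}-\partial_x^\beta\partial_{v_\ell}$ in place of $\partial_x^{\alpha_1}\partial_v^{\alpha_2}$. For indices $\ell\neq j$ the chain-rule factor $\rho^{\delta_{\ell j}}$ equals $1$, so the $\partial_{x_\ell}$ and $\partial_{v_\ell}$ top-order contributions combine exactly into $\mathcal{L}_J^j$ applied to $(\partial_x^\beta\partial_{x_\ell}-\partial_x^\beta\partial_{v_\ell})h$, leaving no residual. For $\ell=j$ a single residual $(\rho-1)(\partial_x^\beta\partial_{v_j}h)(x,V^j)\Psi(\cdot)$ survives, but it only involves $\partial_x^\beta\partial_{v_j}h$ (not the ``bad'' combination $\partial_x^\beta\partial_{x_j}-\partial_x^\beta\partial_{v_j}$), so Cauchy--Schwarz and Young's inequality produce a small $-\frac{1}{8}\int|(\nabla_x^{k+1}-\nabla_x^k\nabla_v)h|^2\dd\mu$ contribution together with a controllable $|\nabla_x^k\nabla_v h|^2$-term that falls into the lower-order sum of the claim.

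The main technical obstacle will be the combinatorial bookkeeping: each Leibniz term must be matched to an appropriate mixed-derivative norm on the right-hand side, and all $v$-polynomial weights arising from $v_j-V_j^j$ or from derivatives of the $\Psi$-argument must be systematically absorbed via Lemma~\ref{villanilemma} into $L^2$-norms with at most one additional $v$-derivative, so as to fit inside the $|\nabla_x^i\nabla_v^{k+2-i}h|^2$ and $|\nabla_x^k\nabla_v^2 h|^2$ terms appearing in the two inequalities.
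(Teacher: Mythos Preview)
Your overall strategy---Leibniz expansion of $[\partial^\alpha,\mathcal L_J^j]$, reversibility of the kernel of $V^j$ with respect to the Gaussian, then Young's inequality---is exactly the paper's approach. However, there is a real gap in your plan for handling the polynomial weights. You claim that all factors in $v_j-V_j^j$ can be absorbed via Lemma~\ref{villanilemma} at the cost of \emph{at most one} extra $v$-derivative. This is only true for the top-order residual $(\rho^{\alpha_{2,j}}-1)(\partial^\alpha h)(x,V^j)\Psi(\cdot)$, where the weight is $\Psi(s)\leqslant C+|s|$. For the Leibniz terms with $\nu<\alpha$, the coefficient $\partial^{\alpha-\nu}\Psi\bigl(\tfrac{\partial_j U_1}{2}(v_j-V_j^j)\bigr)$ is \emph{not} bounded: each $x$-derivative of the $\Psi$-argument contributes a factor $(v_j-V_j^j)$, so that $|\partial^{\alpha-\nu}\Psi(\cdot)|\leqslant C(1+|v_j-V_j^j|^{|\alpha-\nu|})$. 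After squaring, applying reversibility, and using $\E[|v_j-V_j^j|^{2n}]\leqslant C(1+v_j^{2n})$, you are left with $\int v_j^{2|\alpha-\nu|}|\partial^\nu h|^2\dd\mu$ with $|\alpha-\nu|$ possibly as large as $k+1$. A single application of Lemma~\ref{villanilemma} removes only one power of $v_j^2$.

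The paper handles this by first proving a generalized version (Lemma~\ref{lem:generalized-vil}): $\int v^{2n}h^2 e^{-v^2/2}\dd v\leqslant C\sum_{m=0}^n\int (h^{(m)})^2 e^{-v^2/2}\dd v$, obtained by iterating Lemma~\ref{villanilemma}. Applying it yields up to $|\alpha-\nu|$ additional $\partial_{v_j}$-derivatives on $\partial^\nu h$, and a counting check shows the resulting terms $\partial_{v_j}^m\partial^\nu h$ have $x$-order at most $i$ and total order at most $k+1$, so they all land in the lower-order sum $\sum_{j'\leqslant i,\,l\leqslant k+1-j'}|\nabla_x^{j'}\nabla_v^l h|^2$, \emph{not} in the single $|\nabla_x^i\nabla_v^{k+2-i}h|^2$ term as your sketch suggests. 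The same issue (and the same fix) applies in the second inequality when the $\partial_x^\beta$ part of $\partial_x^\beta(\partial_{x_\ell}-\partial_{v_\ell})$ hits $\Psi$. Once you insert Lemma~\ref{lem:generalized-vil} and adjust where the extra derivatives go, your argument goes through.
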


To prove this Lemma, we need a generalization of Proposition~\ref{villanilemma}:

\begin{lem}\label{lem:generalized-vil}
Let $n\in\N^*$. There exists $C>0$ such that for any $h\in\mathcal{C}_c^{\infty}(\R)$,
\[\int_{\R}v^{2n}h^2(v)e^{-v^2/2}\dd v \leqslant C \sum_{k=0}^n \int_{\R}(h^{(k)}(v))^2e^{-v^2/2}\dd v\,.\]
\end{lem}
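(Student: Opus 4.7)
The plan is to proceed by induction on $n$, with the base case $n=1$ being exactly the one-dimensional version ($d=1$) of Lemma~\ref{villanilemma} already proven above. The main mechanism is a single integration by parts, exploiting $v e^{-v^2/2} = -\frac{d}{dv} e^{-v^2/2}$, to trade one factor of $v^2$ in the moment for either lower-order moments or derivatives of $h$.

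For the inductive step, assume the result holds at order $n-1$ for every $h\in\mathcal C_c^\infty(\R)$. Writing $v^{2n} e^{-v^2/2} = -v^{2n-1}\frac{d}{dv} e^{-v^2/2}$ and integrating by parts gives
\[
\int_\R v^{2n} h^2 e^{-v^2/2}\dd v = (2n-1)\int_\R v^{2n-2} h^2 e^{-v^2/2}\dd v + 2\int_\R v^{2n-1} h h' e^{-v^2/2}\dd v .
\]
Applying Young's inequality to the cross term via $2 v^{2n-1} h h' = 2(v^n h)(v^{n-1}h')\leqslant \tfrac12 v^{2n}h^2 + 2 v^{2n-2}(h')^2$, one absorbs half of the left-hand side into the right-hand side and obtains
\[
\int_\R v^{2n} h^2 e^{-v^2/2}\dd v \leqslant 2(2n-1)\int_\R v^{2n-2} h^2 e^{-v^2/2}\dd v + 4 \int_\R v^{2n-2} (h')^2 e^{-v^2/2}\dd v .
\]

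Now the induction hypothesis applied separately to $h$ and to $h'$ (both in $\mathcal C_c^\infty(\R)$) bounds the two integrals on the right by a constant multiple of $\sum_{k=0}^{n-1}\int (h^{(k)})^2 e^{-v^2/2}\dd v$ and $\sum_{k=0}^{n-1}\int (h^{(k+1)})^2 e^{-v^2/2}\dd v$ respectively. Summing, this yields an upper bound of the form $C\sum_{k=0}^n \int (h^{(k)})^2 e^{-v^2/2}\dd v$, which concludes the induction.

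Since every step is an elementary integration by parts combined with Young's inequality, there is no real obstacle; the only point requiring minimal care is the bookkeeping of constants and the fact that the induction hypothesis must be invoked at both $h$ and $h'$, which is legitimate because $h'\in\mathcal C_c^\infty(\R)$ whenever $h$ is. The boundary terms in the integration by parts vanish thanks to the compact support of $h$.
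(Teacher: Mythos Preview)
Your proof is correct. Both you and the paper argue by induction with the base case $n=1$ given by Lemma~\ref{villanilemma}, but the inductive steps differ. The paper writes $v^{2n+2}h^2=v^{2n}(vh)^2$, applies the induction hypothesis directly to the function $vh$, and then uses the Leibniz rule $(vh)^{(k)}=kh^{(k-1)}+vh^{(k)}$ together with a second application of the $n=1$ case to dispose of the leftover $v^2(h^{(k)})^2$ terms. Your route instead performs one integration by parts to lower $v^{2n}$ to $v^{2n-2}$, uses Young's inequality to absorb half of the top-order term, and invokes the induction hypothesis twice, once for $h$ and once for $h'$. Your argument is slightly more streamlined in that it never re-invokes the base case inside the inductive step; the paper's version avoids calling the hypothesis on two different functions but pays for it with an extra appeal to Lemma~\ref{villanilemma}. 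Both are equally elementary and yield the same conclusion.
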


\begin{proof}
We prove the inequality by induction on $n$. The case $n=1$ corresponds to Lemma~\ref{villanilemma}.
Suppose that the inequality holds for some $n\in\N^*$. Then
\begin{align*}
        \int_{\R}v^{2n+2}h^2(v)e^{-v^2/2}\dd v &= \int_{\R}v^{2n}(vh(v))^2e^{-v^2/2}\dd v \\ &\leqslant C \sum_{k=0}^n \int_{\R}((vh(v))^{(k)})^2e^{-v^2/2}\dd v \\&
         =C \sum_{k=0}^n \int_{\R}(kh^{(k-1)}+vh^{(k)})^2e^{-v^2/2}\dd v 
        \\ &\leqslant C \sum_{k=0}^n 2\int_{\R}(k^2(h^{(k-1)})^2+v^2(h^{(k)})^2)e^{-v^2/2}\dd v \\ &\leqslant 2Cn^2 \sum_{k=0}^{n-1} \int_{\R}(h^{(k)}(v))^2e^{-v^2/2}\dd v+ 4C\sum_{k=0}^n \int_{\R}(h^{(k)}(v))^2e^{-v^2/2}\dd v \\ &\qquad + 8C\sum_{k=1}^{n+1} \int_{\R}(h^{(k)})^2e^{-v^2/2}\dd v \\ & \leqslant \widetilde C \sum_{k=0}^{n+1}\int_{\R}(h^{(k)})^2e^{-v^2/2}\dd v\,,
\end{align*}
and hence it holds for $n+1$.
\end{proof}

\begin{proof}[Proof of Lemma~\ref{lem:commutator-jump-Hk}]
For all $i,j\in\cco 1,d\ccf$,
\[ \partial_{v_j}(x,V^i) = ( 0, e_j+(\rho-1)e_i\1_{j=i}), \quad \partial_{x_j}(x,V^i) = (e_j,0),\quad \partial^{\nu}_x\partial^{\alpha}_x (x,V^i) = 0\quad \forall\, |\nu| + |\alpha|\geqslant 2\,.\]
As a consequence, for any $\nu_1,\nu_2\in\N^d$, we have
\[\partial_x^{\nu_1}\partial_v^{\nu_2}(h(x,V^i)) = \rho^{\nu_{2,i}}(\partial_x^{\nu_1}\partial_v^{\nu_2}h)(x,V^i)\,.\]
For all $i\in \cco 1,d \ccf$,
\[\partial_x^{\nu} \frac{\partial_i U_1}{2}(v_i-V_i^i) = \partial^\nu\partial_i U_1\frac{v_i-V_i^i}{2},\quad \partial_x^\nu\partial_{v_i}\frac{\partial_i U_1}{2}(v_i-V_i^i) = \frac{1-\rho}{2}\partial^\nu\partial_i U_1\,.\]
This yields that the only non-vanishing derivatives of $\Psi(\frac{\partial_i U_1}{2}(v_i-V_i^i))$ are of the type $\partial_x^\alpha\partial_{v_i}^k\Psi(\frac{\partial_i U_1}{2}(v_i-V_i^i))$. For $k\in\N$ and $\alpha\in \N^d$, we have:
\[
\partial_{v_i}^k\Psi\po\frac{\partial_i U_1}{2}(v_i-V_i^i)\pf = \Psi^{(k)}\po\frac{\partial_i U_1}{2}(v_i-V_i^i)\pf\po\frac{1-\rho}{2}\pf^k(\partial_i U_1)^k\,,
\] 
and 
\[
\partial_x^\alpha\partial_{v_i}^k\Psi\po\frac{\partial_i U_1}{2}(v_i-V_i^i)\pf = \po\frac{1-\rho}{2}\pf^k\sum_{\nu\leqslant\alpha}\binom{\alpha}{\nu}\partial_x^{\nu}\Psi^{(k)}\po\frac{\partial_i U_1}{2}(v_i-V_i^i)\pf\partial_x^{\alpha-\nu}(\partial_i U_1)^k. 
\]
Therefore, using that $\Psi(s) \leqslant c+|s|$ for some $c>0$, and that $\Psi'$ and $\partial_i U_1$ and their derivatives of all order are bounded, we get that for all $k\in \N$ and $\alpha\in \N^d$ there exists $C>0$ such that:

\[   \left|\partial_x^{\alpha}\partial_{v_i^k}\Psi\po\frac{\partial_i U_1}{2}(v_i-V_i^i)\pf\right| \leqslant  C\po 1+|v_i-V_i^i|^{|\alpha|+k}\pf \,. \]

Thus, for $\alpha\in\N^{2d}$, we have
\begin{align*}
        [\partial^{\alpha},\mathcal{L}_J^i]h &= \frac{2}{1-\rho}\po\E\co\partial^{\alpha}\co(h(x,V^i)-h)\Psi\po\frac{\partial_i U_1}{2}(v_i-V_i^i)\pf\cf\cf-\E\co(\partial^\alpha h(x,V^i) - \partial^\alpha h)\Psi\po\frac{\partial_i U_1}{2}(v_i-V_i^i)\pf\cf\pf \\ 
        &= \frac{2}{1-\rho} \sum_{\nu<\alpha}\binom{\alpha}{\nu}\E[(\partial^\nu(h(x,V^i))-\partial^\nu h)\partial^{\alpha-\nu}\po\Psi\po\frac{\partial_i U_1}{2}(v_i-V_i^i)\pf\pf] \\
        &\qquad +\frac{2}{1-\rho}\E\co(\partial^{\alpha}(h(x,V^i))-\partial^{\alpha}h(x,V^i))\Psi\po\frac{\partial_i U_1}{2}(v_i-V_i^i)\pf\cf \\
        &= \frac{2}{1-\rho} \sum_{\nu<\alpha}\binom{\alpha}{\nu}\E\co\po \rho^{\nu_{i+d}}\partial^\nu h(x,V^i)-\partial^\nu h\pf\partial^{\alpha-\nu}\po\Psi\po\frac{\partial_i U_1}{2}(v_i-V_i^i)\pf\pf\cf \\ &\qquad +\frac{2}{1-\rho}\E\co\po\rho^{\alpha_{i+d}}-1\pf\partial^{\alpha}h(x,V^i)\Psi\po\frac{\partial_i U_1}{2}(v_i-V_i^i)\pf\cf\,.
\end{align*}
As a consequence, there exists a $C>0$ such that
\[
    |[\partial^{\alpha},\mathcal{L}_J^i]h|^2 \leqslant C \po \E[(\partial^\alpha h(x,V^i))^2(1+|v_i-V^i_i|^2)]+\sum_{\nu<\alpha}\E\co(\partial^\nu h(x,V^i)-\partial^\nu h)^2(1+|v_i-V^i_i|^{2|\alpha-\nu|})\cf\pf\,.
\]
As in the $H^1$ case, the integrals with respect to $\mu$ of the previous expectations can be bounded using the reversibility of the law of $V^i$ with respect to the Gaussian measure, as well as Lemma \ref{lem:generalized-vil}.
For any $n\in\N^*$,
\[ \E[|v_i-V_i^i|^{2n}] = \E[|(1-\rho)v_i+\sqrt{1-\rho^2}\new{\xi}|^{2n}] \leqslant C(1+|v_i|^{2n})\,.\]
Therefore, using~\eqref{revers} and Lemma~\ref{lem:generalized-vil}, for all $\nu \in \N^{2d}$ and $n\in\N$:
\begin{multline*}
    \int_{\R^{2d}}\E[|v_i-V^i_i|^{2n}\partial^\nu h(x,V^i)|^2]\dd\mu = \int_{\R^{2d}} (\partial^\nu h)^2\E[|v_i-V^i_i|^{2n}] \\ \leqslant C\int_{\R^{2d}}\po 1 + v_i^{2n}\pf |\partial^\nu h|^2\dd\mu \leqslant C
    \sum_{k=0}^{n}\int_{\R^{2d}}|\partial_{v_i}^k\partial^\nu h|^2\dd\mu\,. 
\end{multline*}
Putting everything together, we get that for all $\alpha \in\N^{2d}$, there is $C>0$ such that
\begin{equation*}
\int_{\R^{2d}}|[\partial^{\alpha},\mathcal{L}_J^i]h|^2\dd\mu \leqslant  C\po \int_{\R^{2d}} (\partial_{v_i}\partial^\alpha h)^2\dd\mu + \sum_{\nu\leqslant\alpha}\sum_{k=0}^{|\alpha-\nu|}\int_{\R^{2d}}(\partial_{v_i}^k\partial^\nu h)^2 \dd \mu \pf\,.
\end{equation*}
For any $i\in\cco 0,k\ccf$ and $\veps>0$, Young inequality yields
    \begin{multline*}
    \int_{\R^{2d}}\nabla_x^i\nabla_v^{k+1-i}h \cdot [\mathcal{L}_J,\nabla_x^i\nabla_v^{k+1-i}]h\dd\mu
    \geqslant -\frac{1}{\veps}\int_{\R^{2d}}|\nabla_x^i\nabla_v^{k+1-i}h|^2\dd\mu \\ -  C\veps\int_{\R^{2d}}\po\sum_{\underset{l\leqslant k+1-j}{j\leqslant i}}|\nabla_x^j\nabla_v^l h|^2+|\nabla_x^i\nabla_v^{k+2-i} h|^2\pf\dd\mu\,.
\end{multline*}
To treat the term with derivative $\nabla_x^{k+1}-\nabla_k^{k}\nabla_v$, write for $\alpha\in\N^d$
\begin{equation*}
    \begin{split}
        [\partial_x^{\alpha},\mathcal{L}_J^i]h
        &= \frac{2}{1-\rho} \sum_{\nu<\alpha}\binom{\alpha}{\nu}\E\co\po\partial_x^\nu h(x,V^i)-\partial_x^\nu h\pf\partial_x ^{\alpha-\nu}\po\Psi\po\frac{\partial_i U_1}{2}(v_i-V_i^i)\pf\pf\cf\,,
    \end{split}
\end{equation*}
so that no derivatives of order $k+2$ appear, nor derivatives of the form $\na_x^{k+1}$, and we have
\[
\int_{\R^{2d}}|[\partial_x^{\alpha},\mathcal{L}_J^i]h|^2\dd\mu \leqslant C\int_{\R^{2d}}\sum_{\nu<\alpha}\sum_{k=0}^{|\alpha-\nu|}(\partial_{v_i}^k\partial_x^\nu h)^2\dd\mu.\]
Hence, the only derivatives of order $k+2$ that appear in $[\mathcal{L}_J,\nabla_x^{k+1}-\nabla_x^{k}\nabla_v]h$ come from the term $[\mathcal{L}_J,\nabla_x^{k}\nabla_v]h$ that we treated above. Using Young inequality, we get
\begin{multline*}
\int_{\R^{2d}}(\nabla_x^{k+1}-\nabla_x^{k}\nabla_v h)\cdot [\mathcal{L}_J,\nabla_x^{k+1}-\nabla_x^{k}\nabla_v]h\dd\mu \\\geqslant  -\frac{1}{8}\int_{\R^{2d}}|\nabla_x^{k+1}-\nabla_x^{k}\nabla_v h|^2\dd\mu - C\po\int_{\R^{2d}}\sum_{\underset{j\leqslant k+1-i}{i\leqslant k}}|\nabla_x^i\nabla_v^{j}h|^2\dd\mu  - \int_{\R^{2d}}|\nabla_x^{k}\nabla_v^{2} h|^2\dd\mu\pf\,,
\end{multline*}
which concludes the proof.
\end{proof}

Combining all the commutator terms, we get that for $i\in \cco 0,k-1\ccf$ there exists $C>0$ such that for all $\veps>0$ small enough,
\begin{multline*}
        \int_{\R^{2d}} \nabla_x^i\nabla_v^{k+1-i}h\cdot[\mathcal{L},\nabla_x^i\nabla_v^{k+1-i}]h\dd\mu \\ \geqslant -\frac{C}{\veps}\int_{\R^{2d}}|\nabla_x^i\nabla_v^{k+1-i}h|^2\dd\mu -C\int_{\R^{2d}}|\nabla_x^{i+1}\nabla_v^{k-i} h|^2\dd\mu -C\sum_{j=0}^{i-1}\int_{\R^{2d}}|\nabla_x^j\nabla_v^{k+2-i}h|^2\dd\mu \\
         -C\varepsilon\int_{\R^{2d}}\po\sum_{\underset{l\leqslant k+1-j}{j\leqslant i}}|\nabla_x^j\nabla_v^l h|^2+|\nabla_x^i\nabla_v^{k+2-i} h|^2\pf\dd\mu\,,
\end{multline*}
as well as
\begin{multline*}
      \int_{\R^{2d}}\nabla_x^{k}\nabla_vh\cdot[\mathcal{L},\nabla_x^{k}\nabla_v]h\dd\mu \geqslant \\ -C\po \frac{1}{\veps} + \frac{1}{\eta} \pf \int_{\R^{2d}}|\nabla_x^{k}\nabla_v h|^2\dd\mu  - \frac{\eta}{8}\int_{\R^{2d}}|(\nabla_x^{k+1}-\nabla_x^{k}\nabla_v)h|^2\dd\mu - C \sum_{j=0}^{k-1}\int_{\R^{2d}}|\nabla_x^j\nabla_v^2 h|^2\dd\mu \\ - C\veps\int_{\R^{2d}}\po\sum_{\underset{l\leqslant k+1-j}{j\leqslant k}}|\nabla_x^j\nabla_v^l h|^2+|\nabla_x^k\nabla_v^{2} h|^2\pf\dd\mu\,,
\end{multline*}
and finally
\begin{multline*}
        \int_{\R^{2d}}(\nabla_x^{k+1}-\nabla_k^{k}\nabla_v h)\cdot [\mathcal{L},\nabla_x^{k+1}-\nabla_x^{k}\nabla_v]h\dd\mu \\ \geqslant \frac{1}{4}\int_{\R^{2d}}|\nabla_x^{k+1}-\nabla_x^{k}\nabla_v h|^2\dd\mu - C\int_{\R^{2d}}\sum_{\underset{j\leqslant k+1-i}{i\leqslant k}}|\nabla_x^i\nabla_v^{j}h|^2\dd\mu  - C \new{\int_{\R^{2d}}}\po\sum_{j=0}^{k}|\nabla_x^j\nabla_v h|^2  + \sum_{j=0}^{k}|\nabla_x^j\nabla_v^2 h|^2 \pf\new{\dd\mu}\,.
\end{multline*}
Thanks to those expressions, we are now able to prove Proposition~\ref{recurrence}.

\begin{proof}[Proof of Proposition~\ref{recurrence}.]
The fact that inequality~\eqref{rec} holds for $k=1$ is proven in Lemma~\ref{lem:initialisation}. Let $k\in\N^*$, and suppose that inequality~\eqref{rec} holds for such $k$ and some $\rho_k>0$ and $\omega_{i,p}>0$, $0\leqslant i\leqslant p \leqslant k$. Fix some $\omega_{i,k+1}>0$, $0\leqslant i\leqslant k+1$, $\varepsilon>0$ small enough so that the previous formulas for the commutators hold, and choose $\eta=\omega_{k+1,k+1} / \omega_{k,k+1}$. By assumption we have
\begin{multline*}
    \int_{\R^{2d}} \Gamma(h) \dd\mu + \sum_{p=1}^{k+1} \int_{\R^{2d}}\po\sum_{i=0}^{p-1}\omega_{i,p}\Gamma_{i,p}(h) + \omega_{p,p}\Gamma_{p,p}(h)\pf\dd\mu \\ \geqslant \rho_k\po \new{N}_k(h) + \int_{\R^{2d}}\sum_{i=0}^k|\nabla_x^i\nabla_v^{k+1-i}h|^2\dd\mu \pf + \int_{\R^{2d}}\po\sum_{i=0}^k\omega_{i,k+1}\Gamma_{i,k+1}(h) + \omega_{k+1,k+1}\Gamma_{k+1,k+1}(h)\pf\dd\mu \,.
\end{multline*}
By using Proposition \ref{prop:calcul_Gamma}, the bounds on the commutators and the fact that 
\begin{align*}
\Gamma(\na_x^i\na_v^{j}h)& \geqslant 2\gamma|\na_x^i\na_v^{j+1} h|^2,\\ \Gamma((\na_x^{k+1}-\na_x^{k}\na_v)h) & \geqslant 2\gamma|(\na_x^{k+1}\na_v-\na_x^{k}\na_v^2)h|^2 \geqslant \gamma |\na_x^{k+1}\na_v h|^2- 2\gamma |\na_x^{k}\na_v^2 h|^2\,,
\end{align*}
the term or order $k+2$ are bounded below by 
\[
2\gamma \sum_{i=0}^{k} \omega_{i,k+1}|\na_x^i\na_v^{k+2-i}h|^2 + \gamma\omega_{k+1,k+1} |\na_x^{k+1}\na_v h|^2 - (C_1+2\gamma) \omega_{k+1,k+1} |\na_x^k\na_v^2h|^2 - C_1\varepsilon \sum_{i=0}^{k}\omega_{i,k+1} |\na_x^i\na_v^{k+2-i}h|^2\,,
\]
for some constant $C_1>0$.
We may bound from below the terms of order $k+1$ for $\varepsilon$ small enough by
\begin{multline*}
    \rho_k \sum_{i=0}^k|\nabla_x^i\nabla_v^{k+1-i}h|^2 - \sup_{0\leqslant i\leqslant k+1}\omega_{i,k+1}\frac{2C_2}{\varepsilon}\sum_{i=0}^k |\na_x^i\na_v^{k+1-i}h|^2 \\ - C_2\frac{\omega_{k,k+1}^2}{\omega_{k+1,k+1}} |\na_x^k\na_vh|^2 +  \frac{\omega_{k+1,k+1}}{8}  |(\nabla_x^{k+1}-\nabla_x^{k}\nabla_v)h|^2\,,
\end{multline*}
for some constant $C_2>0$. 
Finally, the term of order at most $k$ may be bounded from below by
\[
\po \rho_k\inf_{i+j\leqslant k}\omega_{i,i+j} - 3C_3\sup_{0\leqslant i\leqslant k+1}\omega_{i,k+1} \pf \sum_{i+j\leqslant k}|\na^i_x\na_v^jh|^2\,,
\]
for some constant $C_3>0$. Set $C_4 = \max(C_1,C_2,C_3)$ (which is independent from $\varepsilon$ and the weights $\omega_{i,j}$).
Now, choose $\veps$ and $\omega_{i,k+1}$ for $i\in\cco 1,k+1\ccf$ such that 
\[
\frac{\omega_{k+1,k+1}}{\omega_{k,k+1}}\leqslant \frac{\gamma-C_4\veps}{2\gamma+C_4},\quad \varepsilon < \gamma/C_4,\quad \omega_{i,k+1} < \varepsilon\rho_k/4C_4,\,1\leqslant i \leqslant k-1,\quad \omega_{k,k+1}\po \frac{2}{\veps}+\frac{\omega_{k,k+1}}{\omega_{k+1,k+1}}\pf < \rho_k/(2C_4)\,,
\]
and 
\[ 6C_4\sup_{0\leqslant i\leqslant k+1}\omega_{i,k+1} < \rho_k\inf_{i+j\leqslant k}\omega_{i,i+j}  \,. \]
We get
\begin{multline*}
\int_{\R^{2d}} \Gamma(h) \dd\mu + \sum_{p=1}^{k+1} \int_{\R^{2d}}\po\sum_{i=0}^{p-1}\omega_{i,p}\Gamma_{i,p}(h) + \omega_{p,p}\Gamma_{p,p}(h)\pf\dd\mu \\ \geqslant \tilde\rho_{k+1}\po \sum_{p=1}^{k+1} \int_{\R^{2d}}\po\sum_{i=0}^{p-1}\omega_{i,p}|\nabla_x^i\nabla_{v}^{p-i} f|^2 + \omega_{p,p}|(\nabla_x^p-\nabla_x^{p-1}\nabla_v)f|^2\pf  + \sum_{i=0}^k|\nabla_x^i\nabla_v^{k+2-i}h|^2\dd\mu \pf, 
\end{multline*}
for any 
\[ 
\tilde \rho_{k+1} < \min\po \frac{\rho_k}{2}, \frac{\omega_{k+1,k+1}}{8},\gamma \inf_{i+j\leqslant k+1}\omega_{i,i+j}\pf\,.
\]
As in the $H^1(\mu)$ case, an application of the Poincaré inequality~\eqref{eq:poincare} yields inequality~\eqref{rec}, which concludes the induction.
\end{proof}

\begin{proof}[Proof of Theorem~\ref{thm:conv-Hk}]
Our goal is to apply Lumer-Phillips theorem to the operator $\mathcal L+\rho_k/2I$, where $I$ denotes the identity operator of $H^k$. This theorem can be stated as follows: an operator $A$ on a Hilbert space generates a contraction semi-group if and only if it is maximally dissipative, see~\cite[Chapter IX, p.250]{Func_anal}. Fix $k\in\N$. The scalar product
\begin{equation*}
\left\langle f,g \right\rangle_{k}^2 = \int_{\R^{2d}}  fg \dd\mu + \int_{\R^{2d}} \sum_{p=1}^k \po \sum_{i=0}^{p-1}\omega_{i,p} \na_x^i\na_v^{p-i} f \cdot  \na_x^i\na_v^{p-i} g + \omega_{p,p} \po\na_x^p-\na_x^{p-1}\na_v\pf f \cdot \po\na_x^p-\na_x^{p-1}\na_v\pf g \pf \dd \mu
\end{equation*}
generates a norm equivalent to the usual norm of $H^k$.
Proposition~\ref{recurrence} and a density argument yield that the operator $\mathcal L+\rho_k/2I$, is dissipative:
\[
\forall h\in D(\mathcal L),\, \left\langle(\mathcal L+\rho_k/2I)h,h\right\rangle_{k}\leqslant 0,
\]
where $D(\mathcal L)$ denote the domain of $\mathcal L$ in $H^k$ defined by:
\[
f\in D(\mathcal L),\, g = \mathcal Lf \Leftrightarrow f\in H^k,\,  \lim_{t\rightarrow 0} \left\|\frac{P_tf-f}{t} - g\right\|_{H^k} = 0.
\]
We are left to show that $\mathcal L+\rho I$, for some $\rho<\rho_k/2$, is surjective. Fix such a $\rho<\rho_k/2$. Thanks to Proposition~\ref{recurrence}, we have that \[\Lambda:(f,g)\mapsto \left\langle-(\mathcal L+\rho I)f,g\right\rangle_{k}\] is coercive and continuous from $\po H^k\pf^2$ to $\R$ for $k\geqslant 1$. Hence, we may apply Lax-Milgram theorem to get that for all $g\in H^k$, there exists $f\in H^k$ such that for all $h\in H^k$, we have: 
\[
\left\langle-(\mathcal L+\rho I)f,h\right\rangle_{H^k} = \left\langle-g,h\right\rangle_{H^k},
\]
which implies that $f$ is a solution to the equation
\[
(\mathcal L+\rho I)f = g,
\]
and $\mathcal L+\rho_k/2I$ is maximally dissipative. Lumer-Phillips Theorem then yields that the semi-group generated by $\mathcal L+\rho_k/2$ is a contraction on $H^k$: for all $f\in H^k$
\[
\new{N}_k\po e^{\rho_kt/2}(P_tf-\mu(f))\pf \leqslant \new{N}_k\po  f - \mu(f)\pf,
\]
which concludes the proof.
\end{proof}

\subsection{Lyapunov function and re-weighted Sobolev norm}\label{s-sec:proof-estimates}

\new{The goal of this section is to establish Propositions~\ref{prop:deflyapunov} and~\ref{prop:conv-lyapunov}, thanks to which the previous exponential decay in $H^k$ is adapted to a weighted-Sobolev norm. }

Recall the definition of $V_b$ in \eqref{eq:defV_b}. 

\begin{proof}[Proof of Proposition~\ref{prop:deflyapunov}]
Let $a>0$ and $b\in(0,1)$. First, notice that
\begin{align*}
    \nabla_x V_b &= b\po\nabla U_0 - a\po\frac{v}{\sqrt{1+|x|^2}}-\frac{x\cdot v}{(1+|x|^2)^{3/2}}x\pf \pf V_b\,,\\
    \nabla_v V_b &= b\po v-a \frac{x}{\sqrt{1+|x|^2}}\pf V_b\,,\\
    \Delta_v V_b &= b\po d+b\po|v|^2+a^2\frac{|x|^2}{1+|x|^2}-2 a \frac{x\cdot v}{\sqrt{1+|x|^2}}\pf\pf V_b\,.
\end{align*}
Let us denote
\begin{align*}
    \mathcal{L}_1^* f &= -v\cdot\nabla_x f +(\nabla U_0-\gamma v)\cdot \nabla_v f + \gamma\Delta_v f\,, \\
    \mathcal{L}_2^* f &= \frac{2}{1-\rho}\sum_{i=1}^d\E\co(f(x,V^i)-f(x,v))\Psi\po\frac{\partial_i U_1}{2}(V_i^i-v_i)\pf\cf\,.
\end{align*}
Notice that $\mathcal{L}_1^*$ is the adjoint of the generator of the Langevin diffusion associated to the measure with density proportional to $\exp(-(U_0(x)+|v|^2/2)$, not $\mu$. When we integrate with respect to $\mu$, as we saw in Section~\ref{s-sec:mesure-inv}, additional terms appear in $(\mathcal{L}_D+\mathcal{L}_H)^*$ and $\mathcal{L}_J^*$ and then cancel out when they are summed. Therefore,  
although $\mathcal{L}_1^* \neq (\mathcal{L}_D+\mathcal{L}_H)^*$ and $\mathcal{L}_2^*\neq\mathcal{L}_J^*$, we indeed have $\mathcal{L}^* = \mathcal{L}^*_1+\mathcal{L}^*_2$.

Using that $-\nabla U_0(x)\cdot x\leqslant -\kappa |x|^2+C$, and the Young inequality, for all $\veps>0$,
\begin{multline*}
    \frac{\mathcal{L}_1^* V_b}{bV_b} \\= \po\frac{a}{\sqrt{1+|x|^2}}-\gamma(1-b)\pf|v|^2-a\frac{(x\cdot v)^2}{(1+|x|^2)^{3/2}}-a\frac{x\cdot\nabla U_0}{\sqrt{1+|x|^2}} +a\gamma(1-2b)\frac{x\cdot v}{\sqrt{1+|x|^2}}+b\gamma a^2\frac{|x|^2}{1+|x|^2} + d\gamma 
    \\\leqslant \po\frac{a}{\sqrt{1+|x|^2}}+\frac{a\gamma(1-2b)}{2\veps\sqrt{1+|x|^2}}-\gamma(1-b)\pf|v|^2 +\po \veps \gamma(1-2b)-\kappa\pf\frac{a|x|^2}{\sqrt{1+|x|^2}} + C+d\gamma+b\gamma a^2\,.
\end{multline*}

Then, by taking small enough $\veps$ and $a$, the terms in front of $|v|^2$ and $|x|^2$ are negative. This shows that there exist constants $C_1,C_2,C_3$ such that

\begin{equation}\label{eq:lyapulangevin}
\mathcal{L}^*_1 V_b \leqslant (C_1-C_2|v|^2-C_3|x|)V_b\,.
\end{equation}
Now,
\begin{align*}
    \mathcal{L}_2^* V_b &= CV_b\sum_{i=1}^d\E\co\po\exp\po b\po\frac{|V_i^i|^2-|v_i|^2}{2}-a \frac{x_i\cdot(V_i^i-v_i)}{\sqrt{1+|x|^2}}\pf\pf-1\pf\Psi\po\frac{\partial_i U_1}{2}(V_i^i-v_i)\pf\cf\,.
\end{align*} 
The term inside the previous exponential can be expressed as:
\begin{multline*}
    \frac{b}{2}\po (\rho^2-1)v_i^2+(1-\rho^2)\new{\xi}^2+2\rho\sqrt{1-\rho^2}v_i \new{\xi}-\frac{2 a x_i}{\sqrt{1+|x|^2}}((\rho-1)v_i+\sqrt{1-\rho^2}\new{\xi}))\pf \\
    \leqslant \frac{b}{2}\po \po\rho^2-1+\veps a(1-\rho)\pf v_i^2+(1-\rho^2)\new{\xi}^2-\po\frac{a\sqrt{1-\rho^2}}{\sqrt{1+|x|^2}}x_i-2\rho\sqrt{1-\rho^2}v_i\pf \new{\xi} + \frac{a(1-\rho)}{\veps(1+|x|^2)} x_i^2\pf\,.
\end{multline*}
For any $z\in\mathbb{C}$, if $\new{\xi}\sim\mathcal{N}(0,1)$,
\[\E[e^{z\new{\xi}}] = e^{z^2/2}\,,\]
and for any $x\in\R$, $\E[e^{x\new{\xi}^2}]<\infty$ if and only if $x<1/2$. Therefore, the previous exponential is integrable if $b<\frac{1}{1-\rho^2}$, which is always the case since $b<1$ and $\rho^2<1$. We have 
\begin{multline*}
    \E\co\exp\po b\po\frac{|V_i^i|^2-|v_i|^2}{2}-\alpha \frac{x_i\cdot(V_i^i-v_i)}{\sqrt{1+|x|^2}}\pf\pf\cf \\\leqslant \frac{\exp\po\frac{b}{2}\po \po\rho^2-1+\veps a(1-\rho)\pf v_i^2+\frac{a(1-\rho)}{\veps(1+|x|^2)} x_i^2\pf\pf}{\sqrt{1-b(1-\rho^2)}}  
    \times \exp\po \frac{b^2}{4} \po \frac{a\sqrt{1-\rho^2}}{\sqrt{1+|x|^2}}x_i- 2\rho\sqrt{1-\rho^2}v_i\pf^2\pf \\
    \leqslant C\exp\po \frac{b}{2} \po\rho^2-1+\veps a(1-\rho)+4b\rho^2(1-\rho^2)\pf v_i^2\pf 
    \times \exp\po\frac{b^2}{2}\po \frac{a(1-\rho)}{\veps}+a^2(1-\rho^2)\pf \frac{x_i^2}{1+|x|^2}\pf\,.
\end{multline*}
By taking small enough $a$ and $b$, the term in front of $v_i^2$ is negative, and the previous expectation is bounded. Recall that for all $s\in\R$, $\Psi(s)\leq C+|s|$, therefore
\begin{align*}
    \mathcal{L}_2^* V_b &= CV_b\sum_{i=1}^d\E\co(V_b(x,V^i)/V_b-1)\Psi\po\frac{\partial_i U_1}{2}(V_i^i-v_i)\pf\cf \\
    &\leqslant CV_b\sum_{i=1}^d\sqrt{\E[(V_b(x,V^i)/V_b-1)^2]\E\co \Psi^2\po\frac{\partial_i U_1}{2}(V_i^i-v_i)\pf\cf} \\
    &\leqslant CV_b\sum_{i=1}^d\sqrt{1+\E[(V_i^i-v_i)^2]} \leqslant C(1+|v|)V_b\,.
\end{align*}
Thus, there exist positive constants $C_1,C_2$ and $C_3$ such that 
\[\mathcal{L}^* V_b \leqslant (C_1-C_2|v|^2-C_3|x|)V_b\,,\]
which shows that $V_b$ is indeed a Lyapunov function for $\mathcal{L}^*$.
\end{proof}

\begin{proof}[Proof of Proposition~\ref{prop:conv-lyapunov}.]
We define, for all $k\in\N^*$ and $h\in\mathcal{C}_c^\infty$, the function $\phi_k$ by
\[\phi_k(h) = h^2 + \sum_{p=1}^k\sum_{i=0}^{p-1}\omega_{i,p}|\nabla_x^i\nabla_v^{p-i}h|^2+\omega_{p,p}|(\nabla_x^p-\nabla_x^{p-1}\nabla_v)h|^2\,,\]
where the $\omega_{i,j}$ are the same as in the norms $\new{N}_k$.
In order to prove the result of the lemma, it is sufficient to show the exponential decay of $\int\phi_k(f_t)V_b\dd\mu$, with $f_t = P_t f - \mu(f)$, for a certain $b\in(0,1)$.
Following the notations introduced in equation \eqref{def:Gamma}, and by denoting $\mathcal{L}_0 = \mathcal{L}_H+\mathcal{L}_D$,
\begin{align*}
    \partial_t \int_{\R^{2d}}\phi_k(f_t)V_b\dd\mu &= \int_{\R^{2d}}D_{h}\phi_k(f_t)\mathcal{L}f_t V_b\dd\mu \\
    &= \int_{\R^{2d}}D_{h}\phi_k(f_t)\mathcal{L}_0f_t V_b\dd\mu + \int_{\R^{2d}}D_{h}\phi_k(f_t)\mathcal{L}_J f_t V_b\dd\mu \\
    &= -2\int_{\R^{2d}}\Gamma_{\mathcal{L}_0,\phi_k}(f_t)V_b\dd\mu + \int_{\R^{2d}}\mathcal{L}_0(\phi_k(f_t)) V_b\dd\mu + \int_{\R^{2d}}D_h\phi_k(f_t)\mathcal{L}_Jf_t V_b\dd\mu \\
    &= -2\int_{\R^{2d}}\Gamma_{\mathcal{L}_0,\phi_k}(f_t)V_b\dd\mu + \int_{\R^{2d}}\phi_k(f_t)\mathcal{L}_0^* V_b\dd\mu + \int_{\R^{2d}}D_h\phi_k(f_t)\mathcal{L}_Jf_t V_b\dd\mu\,.
\end{align*}
The adjoint of $\mathcal{L}_0$ in $L^2(\mu)$ is given by
\[\mathcal{L}_0^* h = -v\cdot\nabla_x h +(\nabla U_0-\gamma v)\cdot \nabla_v h + \gamma\Delta_v h + h v\cdot\nabla U_1\, .\]
Therefore, using the inequality \eqref{eq:lyapulangevin} shown in the proof of Proposition~\ref{prop:deflyapunov}, there exists a $b\in(0,1)$ such that
\begin{align*}
    \mathcal{L}_0^* V_b &\leqslant b(-c_1|v|^2-c_2|x|+C)V_b + C'|v|V_b \\ &\leqslant (c_0|v|-c_1|v|^2-c_2|x|+c_3)V_b\, ,
\end{align*}
which shows that $V_b$ is a Lyapunov function for $\mathcal{L}_0^*$. Moreover, since Lemmas \ref{lem:commutator-hamiltonian} and \ref{lem:commutators-diffusion} give bounds on the commutators associated to $\mathcal{L}_H$ and $\mathcal{L}_D$ (without having to integrate them with respect to $\mu$ as it is the case for the commutators with the jump part), then in the proof of Proposition \ref{recurrence}, removing the parts associated with the jumps and the integrals with respect to $\mu$ shows that $\Gamma_{\mathcal{L}_0,\phi}\geqslant 0$. As a consequence, there exist $b\in(0,1),\rho>0,\eta>0$ and $C>0$ such that

\begin{align*}
    \partial_t \int_{\R^{2d}}\phi_k(f_t)V_b\dd\mu &\leqslant -\rho\int_{\R^{2d}}\phi_k(f_t)V_b\dd\mu + C\int_{\R^{2d}}\phi_k(f_t)\dd\mu + \int_{\R^{2d}}D_h\phi_k(f_t)\mathcal{L}_Jf_t V_b\dd\mu \\
    &\leqslant -\rho\int_{\R^{2d}}\phi_k(f_t)V_b\dd\mu + Ce^{-\eta t} + \int_{\R^{2d}}D_h\phi_k(f_t)\mathcal{L}_Jf_t V_b\dd\mu\,,
\end{align*}
where the last inequality comes from Theorem \ref{thm:conv-Hk}, since $\new{N}_k(h) = \int\phi_k(h)\dd\mu$. Let us now look at the term $\int_{\R^{2d}}D_h\phi_k(f_t)\mathcal{L}_Jf_t V_b\dd\mu$. The expression $D_h\phi_k(f_t)\mathcal{L}_Jf_t$ is a linear combination of terms of the form $\partial^\alpha f_t\partial^\alpha \mathcal{L}_J f_t$, with $\alpha\in\N^{2d}$. Notice that if we show that for any $\alpha\in\N^{2d}$, there exists $b\in(0,1)$ such that 
\[\int_{\R^{2d}} \partial^\alpha f_t\partial^\alpha \mathcal{L}_J(f_t) V_b\dd\mu\leqslant Ce^{-\rho t}\,,\]
then we will have
\[\int_{\R^{2d}}D_h\phi_k(f_t)\mathcal{L}_Jf_t V_b\dd\mu \leq Ce^{-\rho t}\,,\]
thus,
\[\partial_t \int_{\R^{2d}}\phi_k(f_t)V_b\dd\mu \leqslant -\rho\int_{\R^{2d}}\phi_k(f_t)V_b\dd\mu + Ce^{-\eta t}\,,\,\] 
and finally 
\[\int_{\R^{2d}}\phi_k(f_t)V_b\dd\mu \leqslant C e^{-\eta t}\,.\]
Let $\alpha\in\N^{2d}$. Successive integrations by parts yield

\begin{align*}
    \int_{\R^{2d}} \partial^\alpha h\partial^\alpha \mathcal{L}_J(h) V_b\dd\mu &= (-1)^{|\alpha|} \int_{\R^{2d}}\mathcal{L}_J(h)\partial^{\alpha}(\partial^{\alpha} h V_b e^{-H})\dd x\dd v \\
    &= (-1)^{|\alpha|}\sum_{\nu\leqslant\alpha}\binom{\alpha}{\nu}\int_{\R^{2d}}\mathcal{L}_J(h)\partial^{2\alpha-\nu}h\partial^{\nu}(V_b e^{-H})\dd\mu\,.
\end{align*}
Since $\nabla U_1$ and $\nabla^2 U_0$ are bounded (which in particular implies that $|\nabla U_0(x)| \leqslant C(1+|x|)$), 
for any $\nu\in\N^{2d}$, there exist $p>0$ and $C>0$ such that 
\[|\partial^{\nu} (V_b(x,v) e^{-H(x,v)})| \leqslant C(1+|x|^{p}+|v|^{p})V_b(x,v)e^{-H(x,v)}\,.\]
Moreover, since $\nabla U_0(x)\cdot x \geqslant \kappa |x|^2$ outside of a compact set, there exist $C>0$ such that
$U_0(x) \geqslant \kappa/2|x|^2-C$. This implies that for any $b<b'$ and any $p\in\N^*$,
\[ (1+|x|^p+|v|^p) V_b(x,v)\leqslant C V_{b'}\,.\]
Hence,
\begin{align*}
    \int_{\R^{2d}} \partial^\alpha h\partial^\alpha \mathcal{L}_J(h) V_b\dd\mu &\leqslant C\int_{\R^{2d}}|\mathcal{L}_J(h)|\po\sum_{\alpha\leqslant\nu\leqslant 2\alpha}|\partial^\nu h|\pf V_{2b}\dd\mu\\
    &\leqslant C\sqrt{\new{N}_{2|\alpha|}(h)\int_{\R^{2d}}|\mathcal{L}_J(h)|^2 V_{4b}\dd\mu}\,.
\end{align*}
Using the assumptions on the function $\Psi$, we have that
\[\Psi\po\pm\frac{\partial_i U_1}{2}(v_i-V^i_i)\pf\leqslant C(1+|v_i-V^i_i|)\,.\]
Recall that we also have shown in the proof of Proposition~\ref{prop:deflyapunov} that 
\[\E[V_b(x,V^i)]\leqslant C V_b\,,\]
and that, for any $b<b'$,
\[|v|^{2}V_b\leqslant C V_{b'}\,.\]
Therefore, using the reversibility of the law of $V^i$ with respect to the standard Gaussian measure,
\begin{align*}
    \int |\mathcal{L}_J h|^2 V_b\dd\mu &\leqslant C\sum_{i=1}^d\int \E\co (h(x,V^i)-h)^2\Psi^2\po \frac{\partial_i U_1}{2}(v_i-V^i_i)\pf\cf V_b\dd\mu \\
    &\leqslant C\sum_{i=1}^d\int\E\co (h(x,V^i)^2+h^2)\Psi^2\po \frac{\partial_i U_1}{2}(v_i-V^i_i)\pf\cf V_b\dd\mu \\
    &\leqslant C\sum_{i=1}^d\int h^2\po\E\co V_b(x,V^i)\Psi^2\po\frac{\partial_i U_1}{2}(V^i_i-v_i)\pf\cf+\E\co\Psi^2\po\frac{\partial_i U_1}{2}(v_i-V^i_i)\pf\cf V_b\pf \dd\mu \\ &\leqslant C\sum_{i=1}^d\int h^2(\E[V_b(x,V^i)(1+|v_i-V_i^i|^2)]+(1+|v_i|^2)V_b)\dd\mu \\
    &\leqslant C\int h^2 V_{b'}\dd\mu\,.
\end{align*}

Finally, notice that
\begin{align*}
    \partial_t\int f_t^2 V_b \dd\mu &= 2\int f_t\mathcal{L}f_t V_b \dd\mu = -2\int\Gamma(f_t)V_b\dd\mu + \int\mathcal{L}(f_t^2)V_b\dd\mu \\
    &\leqslant \int f_t^2\mathcal{L}^* V_b\dd\mu  
    \leqslant -\eta\int f_t^2 V_b\dd\mu + C\int f_t^2\dd\mu \\
    &\leqslant -\eta\int f_t^2 V_b\dd\mu + Ce^{-\rho t}\,.
\end{align*}
Therefore,
\[\int f_t^2 V_b\dd\mu \leqslant Ce^{-\rho t}\,.\]
Using the Theorem \ref{thm:conv-Hk}, we then have that for $b$ small enough, there exist $C>0$ and $\rho >0$, such that

\begin{align*}
\int_{\R^{2d}} \partial^\alpha f_t\partial^\alpha \mathcal{L}_J(f_t) V_b\dd\mu &\leqslant C\sqrt{\int_{\R^{2d}}\new{N}_{2|\alpha|}(f_t)\dd\mu\int_{\R^{2d}}f_t^2 V_{5b}\dd\mu} \\
&\leqslant Ce^{-\rho t}\,,
\end{align*}
which concludes the proof.
\end{proof}

\section{Weak error of the numerical scheme}\label{sec:weakerror}

In this section, devoted to the proof of Theorem~\ref{thm:TalayTubaro}, $\mathcal X=\T^d$. The structure of the proof is the following. First, in Section~\ref{sec:Lyapunovnumerique}, by a Lyapunov argument, we establish uniform-in-time Gaussian moment bounds for the BJAOAJB chain (this is Proposition~\ref{prop:lyapunovscheme}). This is then used in Section~\ref{sec:finitetimeerror} to provide an upper bound for the weak error of the scheme in a finite-time horizon, leading to Theorem~\ref{thm:finitetimeerror}, which is of interest for itself (the proof essentially relies on Taylor expansions of the semi-groups, the expectation of the local errors being bounded thanks to the Gaussian moments established in Proposition~\ref{prop:lyapunovscheme}). Geometric ergodicity for the BJAOAJB chain is established in Section~\ref{sec:ergodicityscheme} (since a Lyapunov function is already available, in order to apply Harris' theorem, it only remains to prove a minorization condition). It is then possible to let time go to infinity in Theorem~\ref{thm:finitetimeerror} and, following the classical proof of Talay and Tubaro, to conclude the proof of Theorem~\ref{thm:TalayTubaro}, as detailed in Section~\ref{sec:expansion}. For completeness,  the classical proof of Corollary~\ref{cor:quadraticRisk} is provided in Section~\ref{sec:quadraRisk}.

\subsection{Lyapunov function for the numerical scheme}\label{sec:Lyapunovnumerique}

For all $z=(x,v)\in\T^d\times\R^d$ and $b>0$, we define the function $V_b$ by
\[V_b(z) = e^{b|v|^2}\,.\]
In the following, we will denote by $(\overline{Z}_n)_{n\in\N}$ the discrete-time Markov chain with transition kernel $Q$ defined in~\eqref{eq:transition-scheme}, corresponding to the BJAOAJB splitting scheme with time step $\delta$, and by $A,B,J$ and $O$ the kernels of the different steps of the scheme, namely $B=e^{\delta/2\mathcal{L}_B} ,A=e^{\delta/2\mathcal{L}_A}, J=e^{\delta/2\mathcal{L}_J}, O=e^{\delta\mathcal{L}_O}$.
The goal of this section is to show the following result.
\begin{prop}\label{prop:lyapunovscheme}
Let $b<\frac{1}{2}$. There exist $\delta_0,C,C'>0$ such that for all $\delta\leqslant\delta_0$,
\[QV_b(z)\leqslant(1-C\delta)V_b(z)+C'\delta\,.\]
As a consequence, for all $n\in\N$,
\[\E_z[V_b(\overline{Z}_n)] = Q^nV_b(z)\leqslant e^{-C\delta n}V_b(z)+\frac{C'}{C}\,.\]
\end{prop}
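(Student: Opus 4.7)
The key structural observation is that $V_b$ depends only on $v$, so the $A$-steps satisfy $AV_b=V_b$ and the action of $Q$ on $V_b$ is driven by the operators $B$, $J$ and $O$ alone. The plan is to bound each of these separately and then to compose, using the strong contraction of $O$ to dominate the perturbations from $B$ and $J$.

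For $O$, the explicit Gaussian computation yields
\[OV_b(v)=(1-2b\sigma_\delta^2)^{-d/2}\exp\bigl(b\alpha_\delta^2|v|^2/(1-2b\sigma_\delta^2)\bigr),\qquad \alpha_\delta=e^{-\gamma\delta},\ \sigma_\delta^2=1-\alpha_\delta^2.\]
Since $b<1/2$, the new coefficient $\beta_\delta:=b\alpha_\delta^2/(1-2b\sigma_\delta^2)$ is strictly less than $b$, with $b-\beta_\delta=b(1-2b)\sigma_\delta^2/(1-2b\sigma_\delta^2)=2b(1-2b)\gamma\delta+O(\delta^2)$ as $\delta\to 0$. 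A direct analysis of the function $t\mapsto(1-2b\sigma_\delta^2)^{-d/2}e^{\beta_\delta t}-(1-c\delta)e^{bt}$ on $\R_+$ (with $t=|v|^2$) shows that its maximum is $O(\delta)$ for a well-chosen $c>0$, which yields the additive Lyapunov bound $OV_b\leqslant(1-c_O\delta)V_b+K_O\delta$ for $\delta$ small enough.

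For $J$, the key step is the pointwise bound $\mathcal{L}_JV_b(x,v)\leqslant CV_b(v)$: using~\eqref{eq:lambdaiqi}, the Gaussian decomposition $V_i^i=\rho v_i+\sqrt{1-\rho^2}\xi$, and the explicit Laplace-transform identity $\E[e^{b((V_i^i)^2-v_i^2)}]=(1-2b(1-\rho^2))^{-1/2}\exp\bigl(-bv_i^2(1-\rho^2)(1-2b)/(1-2b(1-\rho^2))\bigr)$ (well-defined since $b<1/2\leqslant 1/(2(1-\rho^2))$ and decreasing in $|v_i|$), combined with $\Psi(s)\leqslant C(1+|s|)$ and the boundedness of $\partial_iU_1$. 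A Duhamel/Gronwall argument on $[0,\delta/2]$ then gives $JV_b\leqslant(1+C_J\delta)V_b$, and the same bound holds for $V_{b'}$ with $b'$ in a small neighbourhood of $b$ still contained in $(0,1/2)$. For $B$, the compactness of $\T^d$ gives $M:=\|\nabla U_0\|_\infty<\infty$, so Young's inequality $\delta M|v|\leqslant a\delta|v|^2+\delta M^2/(4a)$, with $a>0$ a free parameter, yields $BV_b(v)\leqslant e^{C_a\delta}V_{b(1+a\delta)}(v)$.

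Composing in the order $B,J,A,O,A,J,B$ and tracking the effective exponent of the $V$-weight: each $A$ is inert since $AV_b=V_b$ and $A$ is Markov; each $J$ multiplies by $(1+C_J\delta)$ without changing the exponent; each $B$ multiplies by $e^{C_a\delta}$ and inflates the exponent by the factor $(1+a\delta)$; and the central $O$ contracts the inflated exponent $b(1+a\delta)$ to $b(1+a\delta)-2b(1-2b)\gamma\delta+O(\delta^2)$. After the second $B$, the final exponent is $\tilde\beta_\delta=b-2b\bigl((1-2b)\gamma-a\bigr)\delta+O(\delta^2)$; choosing $a<(1-2b)\gamma$ gives $\tilde\beta_\delta\leqslant b-c\delta$ and hence $QV_b(v)\leqslant\tilde C_\delta V_{\tilde\beta_\delta}(v)$. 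The same splitting argument used for $O$ then converts this multiplicative bound to the desired additive form $QV_b\leqslant(1-C\delta)V_b+C'\delta$, and the iterated bound $Q^nV_b\leqslant e^{-Cn\delta}V_b+C'/C$ follows by induction together with $\sum_{k=0}^{n-1}(1-C\delta)^k\leqslant 1/(C\delta)$. The main obstacle is precisely this composition: the $B$-steps are not naively $V_b$-Lyapunov, since $BV_b/V_b$ can grow like $e^{b\delta M|v|}$, unbounded in $v$, which forces one to work multiplicatively on the exponent of $V_b$ throughout the composition and only switch to the additive form once $O$ has absorbed the inflation produced by $B$ and $J$; the tunability of $a$ in Young's inequality is what closes the argument for arbitrary $\gamma>0$ and $b<1/2$, since $a$ can always be chosen strictly below $(1-2b)\gamma$ at the cost of enlarging the additive constant $C_a\delta$.
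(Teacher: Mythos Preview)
Your approach is essentially the same as the paper's: bound $B$, $J$, $O$ separately in multiplicative form on the exponent of $V_b$, use that $A$ is inert on $V_b$, compose while tracking the exponent, and rely on the $O$-contraction to dominate the $B$-inflation (choosing the Young parameter $a<(1-2b)\gamma$). The paper's Lemmas~\ref{lem:Bpartlyapu} and~\ref{lem:Opartlyapu} give exactly the bounds you write, and the final conversion from the multiplicative bound $QV_b\leqslant \tilde C_\delta V_{\tilde\beta_\delta}$ to the additive form is done in the paper by the same large-$|v|$/small-$|v|$ splitting you describe.

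The one substantive difference is the $J$ step. You derive the purely multiplicative bound $JV_{b'}\leqslant(1+C_J\delta)V_{b'}$ from the generator inequality $\mathcal L_JV_{b'}\leqslant CV_{b'}$ via Gronwall, whereas the paper (Lemma~\ref{lem:Jpartlyapu}) proceeds by an explicit induction on the number of jumps and obtains $JV_{b'}\leqslant(1+K\delta)V_{b'}+K\delta$ with an additive term that then has to be carried through the composition. Your route is cleaner, but be aware that making the Gronwall/Dynkin step rigorous requires some care here: the jump rate $\lambda(x,\cdot)$ is unbounded in $v$, so one must first establish non-explosion of the pure jump process and justify Dynkin's formula for the unbounded function $V_{b'}$ (e.g.\ via a stopping-time localization). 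This is standard but not free; the paper's induction on the number of jumps sidesteps the issue by constructing the estimate directly from the series expansion.
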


In order to prove this proposition, we will need to bound the terms $BV_b,AV_b,OV_b$ and $JV_b$. First, since $A$ only acts on positions, $AV_b=V_b$.
\begin{lem}\label{lem:Bpartlyapu}
For all $\delta_0,\veps>0,b\in(0,1/2)$, there exists $C_\veps>0$ such that for all $0<\delta\leqslant\delta_0$,
\[BV_b\leqslant (1+C_\veps\delta)e^{b(1+\veps\delta)|v|^2}\,.\]
\end{lem}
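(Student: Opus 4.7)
The plan is to unpack the definition of $B$ and reduce the inequality to an elementary manipulation of the exponent, using only Young's inequality and the fact that $\nabla U_0$ is bounded on $\T^d$. Since the infinitesimal generator $\mathcal L_B = -\nabla U_0(x)\cdot\nabla_v$ generates a translation in the velocity variable, one has the explicit formula
\[
BV_b(x,v) = V_b\bigl(x,\, v - \tfrac{\delta}{2}\nabla U_0(x)\bigr) = \exp\!\bigl(b\,|v - \tfrac{\delta}{2}\nabla U_0(x)|^2\bigr).
\]
Expanding the square yields
\[
b\,|v - \tfrac{\delta}{2}\nabla U_0(x)|^2 = b|v|^2 \;-\; b\delta\, v\cdot \nabla U_0(x) \;+\; \tfrac{b\delta^2}{4}\,|\nabla U_0(x)|^2,
\]
so it remains to absorb the cross term into the desired quantity $b(1+\varepsilon\delta)|v|^2 + \log(1+C_\varepsilon\delta)$.

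The only $v$-quadratic contribution produced at order $\delta$ comes from the cross term, which I would control by Young's inequality tuned to match $\varepsilon$: using $|v|\,|\nabla U_0(x)| \leqslant \varepsilon |v|^2 + \frac{1}{4\varepsilon}|\nabla U_0(x)|^2$, I get
\[
-\,b\delta\, v\cdot \nabla U_0(x) \;\leqslant\; b\varepsilon\delta\,|v|^2 \;+\; \frac{b\delta}{4\varepsilon}\,|\nabla U_0(x)|^2.
\]
Since $\mathcal X=\T^d$, the function $U_0\in\mathcal C^\infty(\T^d)$ has $\|\nabla U_0\|_\infty<\infty$, so the remaining terms $\frac{b\delta}{4\varepsilon}|\nabla U_0|^2 + \frac{b\delta^2}{4}|\nabla U_0|^2$ are bounded by some constant $C'\delta$, with $C'$ depending on $\varepsilon$, $b$, $\delta_0$ and $\|\nabla U_0\|_\infty$ but not on $(x,v)$ nor on $\delta\in(0,\delta_0]$. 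Putting these bounds together,
\[
BV_b(x,v) \;\leqslant\; \exp\!\bigl(b(1+\varepsilon\delta)|v|^2 + C'\delta\bigr) \;=\; e^{C'\delta}\, e^{b(1+\varepsilon\delta)|v|^2}.
\]
Finally, since $e^{C'\delta} \leqslant 1+C_\varepsilon \delta$ for $\delta\in(0,\delta_0]$ with, say, $C_\varepsilon := C' e^{C'\delta_0}$, the stated inequality follows.

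There is no real obstacle in this argument; the only point worth flagging is that the use of $\mathcal X=\T^d$ enters precisely through the boundedness of $\nabla U_0$, which allows the $v$-independent $O(\delta)$ remainder inside the exponential to be converted into the multiplicative prefactor $(1+C_\varepsilon\delta)$. The delicate bookkeeping in the calibration of Young's inequality is what ensures that, after absorption, the coefficient of $|v|^2$ in the exponent is exactly $b(1+\varepsilon\delta)$, as needed to later combine this lemma with the corresponding bounds on $AV_b$, $OV_b$ and $JV_b$ in the proof of Proposition~\ref{prop:lyapunovscheme}.
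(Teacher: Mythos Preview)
Your proof is correct and follows essentially the same route as the paper: expand $BV_b(x,v)=\exp(b|v-\tfrac{\delta}{2}\nabla U_0(x)|^2)$, use Young's inequality to turn the cross term into $b\varepsilon\delta|v|^2$ plus a bounded remainder, and convert the resulting $e^{C'\delta}$ into $(1+C_\varepsilon\delta)$. If anything, your treatment of the last step is slightly more careful, since you make explicit how the constant $C_\varepsilon$ depends on $\delta_0$ so that the bound holds for all $\delta\in(0,\delta_0]$, whereas the paper only says ``if $\delta$ is small enough''.
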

\begin{proof}
For all $z=(x,v)\in\T^d\times\R^d$, using the Young inequality,
\begin{equation*}
    BV_b(z) = e^{b|v-\frac{\delta}{2}\nabla U_0(x)|^2}= e^{b\po |v|^2-\delta\nabla U_0(x)\cdot v+\frac{\delta^2}{4}|\nabla U_0(x|^2\pf } \leqslant e^{b\po(1+\veps\delta)|v|^2+\delta\po \frac{\delta}{4}+\frac{1}{4\veps}\pf||\nabla U_0||_\infty^2\pf}\leqslant (1+C_\veps\delta)e^{b(1+\veps\delta)|v|^2}\,,
\end{equation*}
the last inequality being satisfied if $\delta$ is small enough.
\end{proof}

\begin{lem}\label{lem:Opartlyapu}
There exists $\delta_0>0$ (independent from $b$) such that for all $b\in(0,1/2)$, there exist $C_1,C_2>0$ such that for $0<\delta\leqslant\delta_0$,
\begin{align*}
    OV_b(z)\leqslant (1+C_2\delta)e^{b(1-C_1\delta)|v|^2}\,.
\end{align*}
\end{lem}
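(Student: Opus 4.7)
The plan is to compute $OV_b$ in closed form via the Gaussian Laplace transform and then perform a simple first-order expansion in $\delta$. By the definition of the $O$ step,
\[ OV_b(x,v) = \mathbb{E}\left[ \exp\bigl(b|\alpha v + \sigma\xi|^2\bigr)\right], \qquad \alpha := e^{-\gamma\delta},\ \sigma^2 := 1-e^{-2\gamma\delta}, \]
with $\xi\sim\mathcal N(0,I_d)$. Expanding $|\alpha v+\sigma\xi|^2$, the coordinates of $\xi$ decouple. Since the coefficient of $|\xi|^2$ inside the exponential equals $b\sigma^2$, and $b<1/2$ together with $\sigma^2<1$ imply that $2b\sigma^2<1$ (uniformly in $\delta$), the Gaussian integral is finite and can be computed by completing the square coordinate by coordinate, yielding the closed form
\[ OV_b(x,v) = (1-2b\sigma^2)^{-d/2}\exp\left(\frac{b\alpha^2}{1-2b\sigma^2}|v|^2\right). \]

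Next, I would isolate the coefficient $g(\delta):=\alpha^2/(1-2b\sigma^2)$ of $b|v|^2$ in the exponent and prove that $g(\delta)\le 1-C_1\delta$. Using $\sigma^2=1-\alpha^2$, a short algebraic manipulation yields the key identity
\[ 1-g(\delta)=\frac{(1-2b)\sigma^2}{1-2b\sigma^2}. \]
This makes the role of the hypothesis $b<1/2$ completely transparent, because the factor $1-2b$ is positive. Since $\sigma^2\ge \gamma\delta$ for $\delta$ small enough and $1-2b\sigma^2\le 1$, one gets $1-g(\delta)\ge (1-2b)\gamma\delta$, i.e.\ $g(\delta)\le 1-C_1\delta$ with $C_1=(1-2b)\gamma$.

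To control the prefactor $(1-2b\sigma^2)^{-d/2}$, I would choose $\delta_0$ such that $2b\sigma^2\le 1/2$ for all $\delta\le \delta_0$ and all $b\in(0,1/2)$; since $2b\sigma^2<\sigma^2=1-e^{-2\gamma\delta}$, taking $\delta_0=\ln(2)/(2\gamma)$ does the job, and this choice is manifestly independent of $b$. On this range, the elementary inequality $(1-x)^{-d/2}\le 1+dx$ valid for $x\in[0,1/2]$ together with $\sigma^2\le 2\gamma\delta$ yields $(1-2b\sigma^2)^{-d/2}\le 1+C_2\delta$ with $C_2$ possibly depending on $b$. Combining the two estimates proves the lemma.

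The only subtle point is ensuring that $\delta_0$ does not depend on $b$; this is precisely secured by the bound $2b\sigma^2<\sigma^2$ used above, so the argument works uniformly over $b\in(0,1/2)$. The rest of the proof is a routine application of the Gaussian Laplace transform together with the explicit algebraic identity for $1-g(\delta)$.
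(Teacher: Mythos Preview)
Your approach is essentially the same as the paper's: compute $OV_b$ in closed form via the Gaussian integral and then bound the exponent and the prefactor separately. Your treatment of the exponent is in fact cleaner: the paper reaches $g(\delta)\le 1-C_1\delta$ via a Taylor--Lagrange expansion followed by a case split on $b\in(0,1/4)$ versus $b\in(1/4,1/2)$, whereas your algebraic identity
\[
1-g(\delta)=\frac{(1-2b)\sigma^2}{1-2b\sigma^2}
\]
makes the role of the hypothesis $b<1/2$ immediate and avoids any case analysis. Your choice $\delta_0=\ln(2)/(2\gamma)$ is also explicit and visibly independent of $b$.

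There is, however, one small slip in the prefactor bound: the inequality $(1-x)^{-d/2}\le 1+dx$ is \emph{not} valid on all of $[0,1/2]$ once $d\ge 3$ (for $d=3$ and $x=1/2$ the left side is $2\sqrt{2}\approx 2.83$ while the right side is $2.5$). This is harmless for the argument---on $[0,1/2]$ the function $(1-x)^{-d/2}$ is $C^1$ with derivative bounded by $\tfrac{d}{2}2^{d/2+1}$, so the mean-value theorem gives $(1-x)^{-d/2}\le 1+d\,2^{d/2}\,x$, and you can absorb this constant into $C_2$. Just replace the incorrect constant $d$ by the correct Lipschitz bound (or restrict $\delta_0$ further, depending on $d$) and the proof goes through.
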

\begin{proof}
We start by noticing that for all $a_1,a_2\in\R$ and $\new{\xi}\sim\mathcal{N}(0,1)$, the random variable $\exp(a_1\new{\xi}^2+a_2\new{\xi})$ is integrable if and only if $a_1<1/2$. In that case, by recognizing the density of $\mathcal{N}(a_2/(1-2a_2),1/(1-2a_1))$,
\begin{align*}
    \E[\exp(a_1\new{\xi}^2+a_2\new{\xi})] &= \frac{1}{\sqrt{2\pi}}\int_\R \exp(a_1x^2+a_2x-x^2/2)\dd x \\ &= \frac{1}{\sqrt{2\pi}}\int_\R\exp\po\frac{-(1-2a_1)}{2}\po x-\frac{a_2}{1-2a_1}\pf^2+\frac{a_2^2}{2(1-2a_1)}\pf \\
    &= \frac{1}{\sqrt{1-2a_1}}\exp\po\frac{a_2^2}{2(1-2a_1)}\pf\,.
\end{align*}
Using this remark,
\begin{align*}
    OV_b(z) &=  \E\co\exp\po b|e^{-\gamma\delta}v+\sqrt{1-e^{-2\gamma\delta}}\new{\xi}|^2\pf\cf \\
    &= \exp\po be^{-2\gamma\delta}|v|^2\pf \E\co\exp\po 2be^{-\gamma\delta}\sqrt{1-e^{-2\gamma\delta}}v\cdot \new{\xi}+b(1-e^{-2\gamma\delta})|\new{\xi}|^2\pf\cf \\
    &= \frac{1}{\po 1-2b(1-e^{-2\gamma\delta})\pf^{d/2}}\exp\po be^{-2\gamma\delta}|v|^2+\frac{2b^2e^{-2\gamma\delta}(1-e^{-2\gamma\delta})|v|^2}{1-2b(1-e^{-2\gamma\delta})}\pf \\
    &= \frac{1}{\po 1-2b(1-e^{-2\gamma\delta})\pf^{d/2}}\exp\po b|v|^2\frac{e^{-2\gamma\delta}}{1-2b(1-e^{-2\gamma\delta})} \pf\,.
\end{align*}
Then, there exist $\delta_0>0$ and constants $C_1,C_2>0$ such that for all $0<\delta\leqslant\delta_0$,
\[ \frac{e^{-2\gamma\delta}}{1-2b(1-e^{-2\gamma\delta})} \leqslant 1-C_1\delta\,,\]
and
\[ \frac{1}{\po 1-2b(1-e^{-2\gamma\delta})\pf^{d/2}} \leqslant 1+C_2\delta\,.\]
Hence
\[ OV_b(z)\leqslant (1+C_2\delta)e^{b(1-C_1\delta)|v|^2}\,. \]
Notice that $\delta_0$ can in fact be chosen independently from $b$.
On one side, 
\[\frac{1}{\po 1-2b(1-e^{-2\gamma\delta})\pf^{d/2}} \leqslant \frac{1}{(1-2\gamma\delta)^{d/2}}\leqslant 1+d\gamma\delta+O(\delta^2),\]
and on the other side, the Taylor-Lagrange formula shows that there exists $0<c<2\gamma\delta$ such that
\[\frac{e^{-2\gamma\delta}}{1-2b(1-e^{-2\gamma\delta})} = 1-2\gamma(1-2b)\delta+\po 4b^2-3b+\frac{1}{2}\pf c^2.\]
If $1/4<b<1/2$, $4b^2-3b+\frac{1}{2}<0$ so we directly have
\[\frac{e^{-2\gamma\delta}}{1-2b(1-e^{-2\gamma\delta})} \leqslant 1-2\gamma(1-2b)\delta\,,\]
without any condition on $\delta$. Now, if $0<b<1/4$, $4b^2-3b+\frac{1}{2}<1/2$, and
\[\frac{e^{-2\gamma\delta}}{1-2b(1-e^{-2\gamma\delta})} \leqslant 1-\gamma\delta+2\gamma^2\delta^2,\]
and $\delta_0$ can be chosen independently from $b$.
\end{proof}

\begin{lem}\label{lem:Jpartlyapu}
For all $0<b_1<b_2<1/2$, there exist $\delta_0>0$ and $K>0$ such that for all $\delta\leqslant\delta_0$ and all $b\in[b_1,b_2]$,
\[JV_b(z)\leqslant V_b(z)(1+\delta K) + \delta K\,.\]
\end{lem}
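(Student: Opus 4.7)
The plan is to reduce the lemma to the infinitesimal bound $\mathcal{L}_J V_b \leqslant K V_b$ for some constant $K>0$ depending on $b_1,b_2,\rho$ but not on $b\in[b_1,b_2]$, and then to recover the stated inequality via a Gronwall argument on the pure-jump semi-group $(e^{t\mathcal L_J})_{t\geqslant 0}$.

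For the infinitesimal bound, I would start from the definition of $\mathcal L_J$ and equation \eqref{eq:lambdaiqi} to write
\[
\mathcal{L}_J V_b(x,v) = \frac{2V_b(x,v)}{1-\rho}\sum_{i=1}^d \mathbb E\left[\left(e^{b\left((V_i^i)^2-v_i^2\right)}-1\right)\Psi\!\left(\tfrac{\partial_i U_1(x)}{2}(v_i-V_i^i)\right)\right],
\]
since $|V^i|^2 - |v|^2 = (V_i^i)^2 - v_i^2$. Write the bracket as $A_i(x,v) - \lambda_i(x,v)\cdot\tfrac{1-\rho}{2}$, where $A_i$ is the expectation of the positive part. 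Since $\lambda_i\geqslant 0$ and $V_b\geqslant 1$, it suffices to bound $A_i(x,v)$ uniformly in $(x,v)$ and $b$. Substituting $V_i^i = \rho v_i + \sqrt{1-\rho^2}\,\xi$ and computing the Gaussian moment exactly as in the proof of Proposition~\ref{prop:deflyapunov} yields
\[
\mathbb E\left[e^{b\left((V_i^i)^2-v_i^2\right)}\right] = \frac{1}{\sqrt{1-2b(1-\rho^2)}}\exp\!\left(\frac{b(1-\rho^2)(2b-1)}{1-2b(1-\rho^2)}\,v_i^2\right).
\]
Because $b_2<1/2$ and $\rho^2<1$, both $1-2b(1-\rho^2)$ is bounded away from $0$ and the exponent is $\leqslant -c\,v_i^2$ with $c>0$ uniform in $b\in[b_1,b_2]$. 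A Hölder (or direct Gaussian) argument, combined with the linear bound $\Psi(s)\leqslant C(1+|s|)$ and $|v_i-V_i^i|\leqslant (1-\rho)|v_i|+\sqrt{1-\rho^2}|\xi|$, then shows that the Gaussian decay in $v_i^2$ absorbs both $(1+|v_i|)$ and the extra $|\xi|$ term, giving $A_i(x,v)\leqslant C$ uniformly. Summing over $i$ provides the desired $\mathcal L_J V_b\leqslant K V_b$.

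For the integration step, I would use Dynkin's formula applied to the process $(W_t)$ generating $e^{t\mathcal L_J}$, with a standard localization by the stopping times $\tau_n=\inf\{t\geqslant 0:|W_t|>n\}$ to handle the fact that the jump rates grow linearly in $|v|$. The bound from step 1 yields that $t\mapsto e^{-Kt}\mathbb E_v[V_b(W_{t\wedge\tau_n})]$ is non-increasing; letting $n\to\infty$ (which is legitimate as the estimate prevents explosion of $V_b(W_t)$) gives
\[
J V_b(x,v) = \mathbb E_v\bigl[V_b(x,W_{\delta/2})\bigr] \leqslant e^{K\delta/2}V_b(x,v)\leqslant (1+K'\delta)V_b(x,v)
\]
for $\delta\leqslant\delta_0$ small enough, which implies the stated bound (the additive $\delta K$ term is not even needed since $V_b\geqslant 1$, but is harmless).

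The main obstacle is the explicit control of $A_i$: the coexistence of a linearly-growing rate $\Psi(\cdot)$ in the integrand with the exponential weight $V_b$ means the estimate would fail for $b\geqslant 1/2$, and the whole mechanism relies on the negativity of $(2b-1)$ in the Gaussian moment — this is precisely why the strict assumption $b<1/2$ enters, and where the restriction on $b_2$ in the lemma originates. All other ingredients are routine given the computations already carried out in Proposition~\ref{prop:deflyapunov}.
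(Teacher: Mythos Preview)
Your approach is correct and genuinely different from the paper's. The paper does not pass through the infinitesimal bound $\mathcal L_J V_b\leqslant K V_b$; instead it expands $JV_b(z)=\sum_{k\geqslant 0}\E_z[V_b(Z_\delta)\1_{\{k\text{ jumps}\}}]$ and proves by induction on $k$, conditioning on the first jump time, that each term is bounded by $C^k\delta^k(1+V_b(z))$, then sums the geometric series for $\delta<1/C$. Your route is more conceptual and yields the slightly sharper purely multiplicative bound $JV_b\leqslant e^{K\delta/2}V_b$; the paper's route is more explicit and, because it works directly with the discrete jump count, sidesteps any subtlety about applying Dynkin/Gronwall to the pure-jump semi-group.

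On that last point: your localisation by $\tau_n=\inf\{t:|W_t|>n\}$ is not quite as ``standard'' here as for diffusions, since the jumps are unbounded (Gaussian) and the process can exit $\{|w|\leqslant n\}$ by an arbitrarily large overshoot, so $V_b(W_{\tau_n})$ is not a priori bounded. The fix is easy --- the one-jump estimate $\E[V_b(x,V^i)\mid v]\leqslant C(1+|v|^2)e^{b\alpha|v|^2}$ with $\alpha<1$ (which is exactly what the paper proves as an intermediate step, and which you essentially reprove when bounding $A_i$) shows $\E_v[V_b(W_{t\wedge\tau_n})]<\infty$, after which Dynkin and Gronwall go through as you describe. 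Still, this is the place where a referee would ask for a sentence, not the control of $A_i$, which is indeed routine once the Gaussian moment formula and the negativity of $(2b-1)$ are in hand.
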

\begin{proof}

Let $0<b_1<b_2<1/2$ and $b\in[b_1,b_2]$. In the proof of this lemma, constants $C$ can depend on $b_1$ and $b_2$, but not on $b$. We will denote $Z_k=(X_k,V_k)$ the jump process after $k$ jumps, and $Z_\delta$ the jump process after a time $\delta$. We start by expanding $JV_b(z)$ according to the number of jumps:
\[ \E_z[V_b(Z_\delta)] = \sum_{k=0}^\infty \E_z[V_b(Z_\delta)\1_{\{k \text{ jumps}\}}]\,.\]
First, 
\[\E_z[V_b(Z_\delta)\1_{\{0 \text{ jumps}\}}] = V_b(z)\PP(0 \text{ jumps}) \leqslant V_b(z)\,.\]
Then, let us show by induction on $k$ that there exists a constant $C$ such that for all $k\geqslant 1$,
\begin{equation}\label{eq:reclyapujump}
    \E_z[V_b(Z_\delta)\1_{\{k \text{ jumps}\}}] \leqslant C^k\delta^k(1+V_b(z))\,.
\end{equation}
In order to prove this inequality, we will use some useful bounds. First, recall that there exist a constant $C$ such that for all $z\in\T^d\times\R^d$,
\begin{equation}\label{eq:majtauxsaut}
    \lambda(z) \leqslant C(1+|v|)\,.
\end{equation}
As we saw in the proof of Lemma \ref{lem:Opartlyapu}, if $a_1<1/2$, then the random variable $\exp(a_1\new{\xi}^2+a_2\new{\xi})$ is integrable and 
\begin{align*}
    \E[\exp(a_1\new{\xi}^2+a_2\new{\xi})] &= \frac{1}{\sqrt{1-2a_1}}\exp\po\frac{a_2^2}{2(1-2a_1)}\pf\,.
\end{align*}
More generally, by recognizing the density of a Gaussian random variable with mean $a_2/(1-2a_1)$ and variance $1/(1-2a_1)$,
\[\E[|\new{\xi}|^n\exp(a_1\new{\xi}^2+a_2\new{\xi})] = \frac{1}{\sqrt{1-2a_1}}\exp\po\frac{a_2^2}{2(1-2a_1)}\pf\E[|X|^n]\,,\]
where $X\sim\mathcal{N}\po a_2/(1-2a_1),1/(1-2a_1)\pf$. As a consequence, the random variable $V_b(Z_1)$ is integrable if and only if $b(1-\rho^2)<1/2$, which is always true since $b\in(0,1/2)$.
\begin{align*}
    \E_z[V_b(Z_1)] &\leqslant \sum_{i=1}^d\E_z\co\exp(b|\rho v_i + \sqrt{1-\rho^2}\new{\xi}|^2)\Psi\po\frac{\partial_i U_1}{2}\po (1-\rho)v_i+\sqrt{1-\rho^2} \new{\xi}\pf\pf\cf
    \\ &\leqslant C \sum_{i=1}^d\E_z\co\exp(b|\rho v_i + \sqrt{1-\rho^2}\new{\xi}|^2)(1+ |(1-\rho)v_i+\sqrt{1-\rho^2} \new{\xi}|)\cf
    \\ &\leqslant C \sum_{i=1}^d \exp(b\rho^2|v_i|^2)\E[\exp(b(1-\rho^2)\new{\xi}^2+2b\rho\sqrt{1-\rho^2}v_i\new{\xi})(1+2|v_i|+|\new{\xi}|)]\,,
\end{align*}
so that
\begin{equation}\label{eq:vbz1}
 \E_z[V_b(Z_1)] \leqslant C(1+|v|^2)e^{b\alpha|v|^2}\,,
\end{equation}
where we denoted $\alpha = \frac{\rho^2}{1-2b(1-\rho^2)}$. Notice that
\begin{align*}
    \alpha<1 \iff \rho^{2}< 1-2b(1-\rho^{2}) \iff b<1/2\,,
\end{align*}
and that $b\alpha = b\rho^2/(1-2b(1-\rho^2))$, seen as a function of $b$, is continuous and strictly positive on $(0,1/2)$, so is bounded and reaches its extrema on $[b_1,b_2]$. Since $|v|^2 e^{b\alpha|v|^2}= o(V_b)$ and $|v|=o(V_b)$ when $v$ goes to infinity, let 
\[K=\max(R^2,R)\sup_{b\in[b_1,b_2]}e^{b\alpha R^2}\,,\]
where $R>0$ satisfies
\[ |v|\geqslant R \implies \max(|v|,|v|^2)\leqslant \inf_{b\in[b_1,b_2]} e^{b(1-\alpha)|v|^2}\,,\]
so that
\begin{equation}\label{eq:petito}
    |v|^2 e^{b\alpha|v|^2}\leqslant V_b(z)+K\,,
\end{equation}
and
\begin{equation}\label{eq:petito2}
|v|\leqslant V_b(z)+K\,.
\end{equation}
Furthermore, we also have that
\begin{equation}\label{eq:integralemaj}
    \int_0^\delta (\delta-t)\lambda(z)e^{-\lambda(z)t}\dd t = \frac{\lambda(z)\delta+e^{-\lambda(z)\delta}-1}{\lambda(z)} \leqslant \frac{\lambda(z)\delta^2}{2}\leqslant C\delta^2(1+|v|)\,.
\end{equation}
We start by showing \eqref{eq:reclyapujump} in the case $k=1$. We suppose that $\lambda(z)>0$ (otherwise there is no jump and the result is straightforward). By conditioning with respect to the only jump time,

\begin{align*}
\E_z[V_b(Z_\delta)\1_{\{1 \text{ jump}\}}] &= \int_{0}^\delta \lambda(z)e^{-\lambda(z)t}\int_{\T^d\times\R^d}q(z,\dd \tilde{z})V_b(\tilde{z})e^{-(\delta-t)\lambda(\tilde{z})}\dd t \\
&\leqslant  \int_{0}^\delta \lambda(z) e^{-\lambda(z)t}\int_{\T^d\times\R^d} q(z,\dd \tilde{z})V_b(\tilde{z})\dd t \\
&\leqslant \E_z[V_b(Z_1)]\lambda(z)\delta\,.
\end{align*}
Then, using~\eqref{eq:vbz1},~\eqref{eq:petito} and~\eqref{eq:petito2}, we have 
\begin{equation*}
\E_z[V_b(Z_\delta)\1_{\{1 \text{ jump}\}}] \leqslant C \delta e^{b\alpha|v|^2}(1+|v|^2) 
\leqslant C \delta \po V_b+K\pf \leqslant C\delta(1+V_b)\,,
\end{equation*}
 which proves the inequality for $k=1$. Now let $k\geqslant 1$ and assume that \eqref{eq:reclyapujump} holds for $k$. By conditioning with respect to the first jump time,
\begin{align*}
\E_z[V_b(Z_\delta)\1_{\{k+1 \text{ jumps}\}}] &= \int_0^\delta \lambda(z)e^{-\lambda(z)t}\int_{\T^d\times\R^d}q(z,\dd \tilde{z})\E_{\tilde{z}}[V_b(Z_{\delta-t})\1_{\{k \text{ jumps}\}}]\dd t \\
&\leqslant C^k\int_0^\delta (\delta-t)^k\lambda(z)e^{-\lambda(z)t}\int_{\T^d\times\R^d}q(z,\dd \tilde{z})(V_b(\tilde{z})+1)\dd t \\
&\leqslant \delta^{k-1}C^k\po \E_z[V_b(Z_1)]+ 1\pf\int_0^\delta (\delta-t)\lambda(z)e^{-\lambda(z)t}\dd t\,.
\end{align*}
Then, using~\eqref{eq:integralemaj},
\begin{equation*}
\E_z[V_b(Z_\delta)\1_{\{k+1 \text{ jumps}\}}] \leqslant \delta^{k+1}C^{k+1}\po e^{b\alpha|v|^2}+1\pf(1+|v|^2)\leqslant \delta^{k+1}C^{k+1} \po 1+V_b\pf\,,
\end{equation*}
which proves the inequality for $k+1$ and ends the induction. By choosing $\delta<1/C$, we can sum the inequalities \eqref{eq:reclyapujump}:
\begin{multline*}
    JV_b(z) = \E_z[V_b(Z_\delta)] = \sum_{k=0}^\infty \E_z[V_b(Z_\delta)\1_{\{k \text{ jumps}\}}] \leqslant V_b(z) + (1+V_b(z))\sum_{k=1}^\infty (C\delta)^k \\ \leqslant V_b(z) + \delta\frac{C}{1-C\delta}(1+V_b(z))\,,
\end{multline*}
so there exist $\delta_0$ and $K>0$ such that for all $\delta<\delta_0$,
\[JV_b \leqslant V_b(1+\delta K)+\delta K.\]
\end{proof}
We are now able to prove Proposition \ref{prop:lyapunovscheme}.

\begin{proof}[Proof of Proposition \ref{prop:lyapunovscheme}]
Let $b\in(0,1/2)$ and $\veps>0$, whose value will be specified later.
Thanks to Lemma \ref{lem:Bpartlyapu}, for all $\delta_1>0$, there exist $C_\veps>0$ such that for all $\delta<\delta_1$,
\[BV_b\leqslant (1+C_\veps\delta) e^{b(1+\veps\delta)|v|^2}\,.\]
Now let $\delta_2>0$ such that $b_2=b(1+\veps\delta_2)<1/2$. Notice that for all $0<\delta\leqslant\delta_2$, $b<b':=b(1+\veps\delta)<b_2$. Thanks to Lemma 
\ref{lem:Jpartlyapu}, there exist $\delta_3>0$ and $K>0$ (whose value is uniform over $[b,b_2]$) such that for all $\delta<\delta_3$,
\[JV_{b'}\leqslant V_{b'}(1+\delta K)+\delta K\,.\]
Therefore, for all $\delta<\min(\delta_1,\delta_2,\delta_3)$,
\[JBV_b(z)\leqslant (1+C_\veps\delta) JV_{b'}(z) \leqslant (1+C_\veps\delta) V_{b'}(z)(1+\delta K)+\delta (1+C_\veps\delta) K\,,\]
and
\[AJBV_b(z) \leqslant (1+C_\veps\delta) AV_{b'}(z)(1+\delta K)+\delta (1+C_\veps\delta) K = (1+C_\veps\delta) V_{b'}(z)(1+\delta K)+\delta (1+C_\veps\delta) K \,.\]
Lemma \ref{lem:Opartlyapu} gives a $\delta_4>0$ (whose value is independent from $b$ or $b'$) such that for $\delta<\delta_4$,
\begin{align*}
    OV_{b'}&\leqslant (1+C_2\delta)\exp\po b'(1-C_1\delta)|v|^2\pf \\
    &\leqslant (1+C_2\delta)\exp\po b(1+\veps\delta)(1-C_1\delta))|v|^2\pf \\
    &\leqslant (1+C_2\delta)\exp\po b(1-(C_1-\veps)\delta)|v|^2\pf\,.
\end{align*}
We would like to have $\veps<C_1$ (In fact, because of the two applications of the kernel $B$, we will need to have $\veps<C_1/2$). When looking at the proof of the Lemma~\ref{lem:Opartlyapu}, the value of $C_1$ can depend on $b'$ (and therefore on $\veps$) if $1/4<b'<1/2$, in which case $C_1=2\gamma(1-2b')=2\gamma(1-2b(1+\veps\delta))$. The constant $\veps$ can always be chosen smaller than half of this value if $\veps<\frac{\gamma(1-2b)}{1+4\gamma b \delta_1}$. In the other cases, $C_1$ does not depend on $b'$, so we can also choose $\veps< C_1/2$. We now denote $a=C_1-\veps$. For all $\delta<\min(\delta_1,\delta_2,\delta_3,\delta_4)$,
\[OAJBV_b(z) \leqslant (1+C_\veps\delta) O V_{b'}(z)(1+\delta K)+\delta (1+C_\veps\delta) K \leqslant (1+C_\veps\delta)(1+C_2\delta) \exp(b(1-a\delta)|v|^2)(1+\delta K)+\delta (1+C_\veps\delta) K\,,\]
and 
\[AOAJBV_b(z) \leqslant (1+C_\veps\delta)(1+C_2\delta) \exp(b(1-a\delta)|v|^2)(1+\delta K)+\delta (1+C_\veps\delta) K\,. \]
From there, in order to re-apply the bounds on the jump kernel, we can let $\delta_5$ such that $b_3 = b(1-a\delta_5)>0$, which gives a $\delta_6>0$ such that for $0<\delta\leqslant\min(\delta_1,\delta_2,\delta_3,\delta_4,\delta_5,\delta_6)$, corresponding to a $b_3<b(1-a\delta)<b$,
\[J\po e^{b(1-a\delta)|v|^2}\pf\leqslant e^{b(1-a\delta)|v|^2}(1+\delta K')+\delta K'\,,\]
which yields
\[JAOAJBV_b(z) \leqslant (1+C_\veps\delta)(1+C_2\delta)(1+\delta K)((1+\delta K') \exp(b(1-a\delta)|v|^2)+\delta K')+\delta (1+C_\veps\delta) K\,.\]
Finally, for the last application of the kernel $B$, we need to have
\[(1+\veps\delta)(1-a\delta) = 1-(a-\veps)\delta -a\veps\delta^2<1\,,\]
which is possible because $a=C_1-\veps$ and that $\veps<C_1/2$. We then denote $c=a-\veps$. Finally, let $\delta_0 = \min\po\delta_1,\delta_2,\delta_3,\delta_4,\delta_5,\delta_6\pf$. There exists  $C'>0$ such that for all $0<\delta<\delta_0$,
\begin{align*}
    BJAOAJBV_b(z)&\leqslant (1+C_\veps\delta)(1+C_2\delta)(1+\delta K)((1+\delta K') B\exp(b(1-a\delta)|v|^2)+\delta K')+\delta (1+C_\veps\delta) K  \\
    &\leqslant (1+C_\veps\delta)(1+C_2\delta)(1+\delta K)((1+\delta K')(1+C_\veps\delta) \exp(b(1-c\delta)|v|^2)+\delta K')+\delta (1+C_\veps\delta) K 
     \\ &\leqslant V_b(z)(1+C'\delta)^5\exp(-bc\delta|v|^2)+\delta C' \,,
\end{align*}
and hence
\begin{equation}\label{eq:demohop}
    BJAOAJBV_b(z) \leqslant V_b(z)\exp(5C'\delta-bc\delta|v|^2)+\delta C'\,.
\end{equation}
Now, writing $R_0^2 = 10C'/(bc)$, distinguishing cases, we see that
\begin{multline*}
    e^{5C'\delta-bc\delta|v|^2} \leqslant e^{-5C'\delta}\1_{|v|\geqslant R_0} + e^{5C'\delta}\1_{|v|< R_0}  \leqslant \po 1 - 5C' e^{-5C'\delta_0}\delta\pf \1_{|v|\geqslant R_0} + \po 1 + 5C' e^{5C'\delta_0}\delta\pf\1_{|v|< R_0} \\
    \leqslant 1-c'\delta + C''\delta \1_{|v|<R_0}\,,
\end{multline*} 
for some $c',C''>0$ independent from $\delta$.
Plugging this in \eqref{eq:demohop} we obtain that there exist $K,K'>0$ independent from $\delta$ such that for all $z$,
\[QV_b(z)\leqslant (1-K\delta)V_b(z)+K'\delta\,,\]
which by induction yields
\begin{align*}
    Q^n V_b(z) &\leqslant (1-K\delta)^n V_b(z)+K'\delta\sum_{k=0}^{n-1}(1-K\delta)^k \\
    &\leqslant  e^{-K\delta n}V_b(z)+\frac{K'}{K}\,,
\end{align*}
concluding the proof.
\end{proof}
\subsection{Finite-time error expansion of the numerical scheme}\label{sec:finitetimeerror}

In this section, $Z_t = (X_t,V_t)$ denotes the continuous-time process with generator $\mathcal{L}$ and with Markov semi-group $P_t$, $(\overline{Z}_n)_{n\in\N}$ is the discrete-time process corresponding to the BJAOAJB splitting-scheme with time-step $\delta$.
We will denote 
\[\overline{P}_t = e^{\frac{t}{2} B}e^{\frac{t}{2} J}e^{\frac{t}{2} A}e^{t O}e^{\frac{t}{2} A}e^{\frac{t}{2} J}e^{\frac{t}{2} B}\,,\]
so that $\overline{P}_\delta f = \E[f(\overline{Z}_1)]$. The goal of this section is to show that the finite-time weak error of the numerical scheme is of order $2$ in the time-step.
\begin{thm}\label{thm:finitetimeerror}
Let $\delta_0>0$ such that Proposition \ref{prop:lyapunovscheme} holds, $f\in\mathcal{A}$, $z\in\T^d\times\R^d$. There exists a function  $t\mapsto C(t,z)>0$ from $\R_+$ to $\R_+$ such that for all $n\in\N$ and $\delta \in(0,\delta_0]$,
\[|\E_z[f(Z_{n\delta})]-\E_z[f(\overline{Z}_n)]|\leqslant C(n\delta,z)\delta^2\,.\]
 More precisely, by denoting $S_3$ the operator 
\begin{multline*}
    S_3 = \frac{7}{144}(2[L_O,[L_O,L_A]]-[L_A,[L_A,L_O]] + \\
    2[L_A+L_O,[L_A+L_O,L_J]]-[L_J,[L_J,L_A+L_O]]+2[L_A+L_J+L_O,[L_A+L_J+L_O,L_B]]-[L_B,[L_B,L_A+L_J+L_O]])\,,
\end{multline*}
then
\begin{align*}
    \E_z[f(Z_{n\delta})]-\E_z[f(\overline{Z_n})] &= \delta^3\sum_{k=0}^{n-1}\E[S_3 P_{(n-k-1)\delta}f(\overline{Z_k})] + R_{n,z}\,,
\end{align*}
with
\begin{align*}
\left|\sum_{k=0}^{n-1}\E[S_3 P_{(n-k-1)\delta}f(\overline{Z_k})]\right| &\leqslant  \frac{C}{\delta} \po V_b(z) e^{-qn\delta} +1\pf\,,
\end{align*}
and 
\begin{align*}
|R_{n,z}| &\leqslant C\delta^3  \po V_b(z) e^{-qn\delta} +1\pf\,.
\end{align*}
\end{thm}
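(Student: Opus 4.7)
The plan is the classical Talay--Tubaro telescoping weak-error argument, adapted to the present jump--diffusion setting and close in spirit to the proof of Theorem~2.6 in~\cite{M45}. First, I would rewrite the error as a telescoping sum over one-step local errors, using that $\mu(f)$ lies in the kernel of $P_\delta - \overline P_\delta$:
\[
\E_z[f(Z_{n\delta})] - \E_z[f(\overline Z_n)] \;=\; \sum_{k=0}^{n-1}\E_z\bigl[(P_\delta - \overline P_\delta)g_k(\overline Z_k)\bigr], \qquad g_k := P_{(n-k-1)\delta}(f-\mu(f)).
\]
The centering by $\mu(f)$ is essential: Theorem~\ref{thm:estimates} applied to $f - \mu(f) \in \mathcal A$ gives, for any multi-index $\alpha$ up to any fixed order, $|\partial^\alpha g_k(z)| \leqslant C e^{-q(n-k-1)\delta} e^{bH(z)}$ for some $b\in(0,1)$ that we can choose arbitrarily small; on $\T^d$ this yields $|\partial^\alpha g_k(z)| \leqslant C e^{-q(n-k-1)\delta} V_b(z)$ for any $b<1/2$, with $V_b$ the Lyapunov function of Proposition~\ref{prop:lyapunovscheme}.

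Second, I would perform a Taylor expansion \textit{à la} Talay--Tubaro of the local error operator $P_\delta - \overline P_\delta$ on $\mathcal A$. The scheme BJAOAJB is a three-level palindromic composition: one symmetrizes first the inner Strang splitting $e^{(\delta/2)\mathcal L_A}e^{\delta \mathcal L_O}e^{(\delta/2)\mathcal L_A}$, then wraps it with half-steps of $\mathcal L_J$, and finally with half-steps of $\mathcal L_B$. The Baker--Campbell--Hausdorff formula applied at each nesting level eliminates all even-order corrections in the exponent and produces, at order $\delta^3$, a block of the form $2[X,[X,Y]] - [Y,[Y,X]]$ per symmetrization. Collecting these blocks and matching with $P_\delta = e^{\delta\mathcal L}$ gives
\[
\overline P_\delta - P_\delta \;=\; -\delta^3 S_3 \;+\; \delta^4 \mathcal R_\delta,
\]
with $S_3$ precisely the combination of three nested double-commutator blocks stated in the theorem, and $\mathcal R_\delta$ an explicit residual Taylor remainder. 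Applying Taylor's integral formula factor by factor, and using the boundedness on $\T^d$ of $\na U_0$, $\na U_1$, $\Psi$ and their derivatives, yields a pointwise bound of the form $|\mathcal R_\delta h(z)| \leqslant C e^{bH(z)} \max_{|\alpha|\leqslant r}\|\partial^\alpha h / e^{bH}\|_\infty$ for some finite $r$ and any $b<1/2$.

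Plugging the expansion into the telescoping sum yields the decomposition announced, namely
\[
\E_z[f(Z_{n\delta})] - \E_z[f(\overline Z_n)] = \delta^3 \sum_{k=0}^{n-1}\E_z[S_3 g_k(\overline Z_k)] + R_{n,z},\qquad R_{n,z} := \delta^4 \sum_{k=0}^{n-1}\E_z[\mathcal R_\delta g_k(\overline Z_k)].
\]
By step~1 each single term is bounded by $C e^{-q(n-k-1)\delta}\E_z[V_b(\overline Z_k)]$, and Proposition~\ref{prop:lyapunovscheme} gives $\E_z[V_b(\overline Z_k)] \leqslant V_b(z) e^{-c k\delta} + C$. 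A direct computation of the resulting geometric sums then gives, for any $q'<\min(q,c)$,
\[
\sum_{k=0}^{n-1} e^{-q(n-k-1)\delta}\bigl(V_b(z)e^{-ck\delta}+1\bigr) \;\leqslant\; \frac{C}{\delta}\bigl(V_b(z)e^{-q'n\delta} + 1\bigr),
\]
which, multiplied respectively by $\delta^3$ and $\delta^4$, produces the two announced estimates.

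The main technical obstacle is the rigorous execution of step~2 for the non-local jump factor $e^{(\delta/2)\mathcal L_J}$ inside $\overline P_\delta$. Since $\mathcal L_J$ has rates $\lambda_i(x,v)$ growing linearly in $|v|$, its iterates cannot be bounded uniformly in $|v|$ in operator norm, and the BCH manipulation must be paired with a quantitative remainder bound. The remedy, already used in Section~\ref{sec:Lyapunovnumerique}, is to expand $e^{(\delta/2)\mathcal L_J}$ according to the number of jumps in the half-step, dominating the contribution of exactly $m$ jumps by $(C\delta)^m(1+V_b)$ as in Lemma~\ref{lem:Jpartlyapu}, and to bound the commutators $[\mathcal L_J,\mathcal L_A]$, $[\mathcal L_J,\mathcal L_B]$, $[\mathcal L_J,\mathcal L_O]$ and their iterates entering $S_3$ pointwise against $V_b$-weighted quantities via computations analogous to those of Lemmas~\ref{lem:commutator-jump} and~\ref{lem:commutator-jump-Hk}. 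Once these $V_b$-weighted pointwise Taylor-remainder estimates for all four factor semigroups are in place, they combine with Theorem~\ref{thm:estimates} and Proposition~\ref{prop:lyapunovscheme} to close the argument.
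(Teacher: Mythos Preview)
Your proposal is correct and follows essentially the same route as the paper: telescoping the weak error into one-step local errors $h_k=(P_\delta-\overline P_\delta)P_{(n-k-1)\delta}f$, expanding the local error via the symmetric BCH formula to extract $\delta^3 S_3$ plus an integral remainder of order $\delta^4$, and then summing against the exponential decay from Theorem~\ref{thm:estimates} and the uniform Lyapunov moment bound of Proposition~\ref{prop:lyapunovscheme}. The paper packages the technical ``main obstacle'' you identify into two dedicated lemmas (Lemmas~\ref{lem:stability} and~\ref{lem:bornereste}), the first showing that each of $\mathcal L_A,\mathcal L_B,\mathcal L_J,\mathcal L_O$ and their semigroups map the weighted $C^k$ space $\|\cdot\|_{b,k}$ into $\|\cdot\|_{b',k'}$ (indeed via the jump-number expansion you describe for $e^{t\mathcal L_J}$), and the second deducing the pointwise bounds $|S_3 P_t f|,|L^4 P_t f|,|\partial_s^4\overline P_s P_t f|\leqslant Ce^{-qt}V_b$; note also that the paper writes $\overline P_t-P_t=t^3 S_3+\mathcal O(t^4)$, so your sign convention is opposite, though this is immaterial for the estimates.
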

In order to prove this theorem, we will need the following lemmas.  Recall that given $V:\T^d\times \R^d\rightarrow [1,\infty)$, for $f:\T^d\times\R^d\rightarrow \R$ we write $\|f\|_V=\|f/V\|_\infty$. Let, for all $b\in(0,1/2),k\in\N^*$ and $f\in\mathcal{A}$,

\[||f||_{b,k} = \sum_{|\alpha|\leqslant k} ||\partial^\alpha f||_{V_b}\,.\]
When $\mathcal X = \T^d$, the set $\mathcal{A}$ reads
\begin{align*}
    \mathcal{A} &= \left\{ f\in\mathcal C^{\infty}(\T^d\times\R^d,\R)|\forall \alpha\in\N^{2d},\,\exists\, C>0,\,b\in(0,1/2),\, |\partial^{\alpha} f|\leqslant CV_b\right\} \\
    &= \left\{ f\in\mathcal C^{\infty}(\T^d\times\R^d,\R)|\forall k\in\N,\,\exists\, b\in(0,1/2),\, ||f||_{b,k}<\infty\right\}\,.
\end{align*}

\begin{lem}\label{lem:stability}

For all $t_0>0$, $f\in\mathcal A$, $k\geqslant 2$, $b\in(0,1/2)$ such that $||f||_{b,k}<\infty$ and $b'\in(b,1/2)$, there exists $C>0$ such that for $t\in[0,t_0]$ and $T\in\{\mathcal{L}_A,\mathcal{L}_B,\mathcal{L}_J,\mathcal{L}_O$, $e^{t\mathcal{L}_A},e^{t\mathcal{L}_B},e^{t\mathcal{L}_J},e^{t\mathcal{L}_O}\}$, 
\[ ||Tf||_{b',k'}\leqslant C||f||_{b,k}\,,\]
with $k' =k,k-1 \text{ or } k-2$ (depending on $T$). In particular, the operator $T$ leaves stable the set $\mathcal{A}$.
\end{lem}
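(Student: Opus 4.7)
The plan is to treat each of the eight operators separately. The recurring mechanism is that polynomial growth in $|v|$ is absorbed into the weight via the elementary inequality $(1+|v|)^n V_b\leqslant C_{b,b',n}V_{b'}$, valid for any $b'>b>0$ and $n\in\N$. The value of $k'$ is determined by the differential order of $T$: $k'=k$ for $\mathcal L_J$ and the four semigroups, $k'=k-1$ for the first-order $\mathcal L_A$ and $\mathcal L_B$, and $k'=k-2$ for the second-order $\mathcal L_O$.

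For the generators I would apply Leibniz directly. For $\mathcal L_A=v\cdot\nabla_x$ we have $\partial^{\alpha}(v\cdot\nabla_xf)=v\cdot\partial^{\alpha}\nabla_xf+\sum_i\alpha_{d+i}\partial^{\alpha-e_{d+i}}\partial_{x_i}f$; bounding by $(1+|v|)\max_{|\beta|\leqslant|\alpha|+1}|\partial^\beta f|$ and shifting the weight yields $\|\mathcal L_Af\|_{b',k-1}\leqslant C\|f\|_{b,k}$. The case of $\mathcal L_B$ is identical with the coefficient $\nabla U_0$ (and all its derivatives) bounded on $\T^d$, and that of $\mathcal L_O$ is analogous but second order. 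For $\mathcal L_J$, the pointwise computation already carried out in the proof of Lemma~\ref{lem:commutator-jump-Hk} provides
\[|\partial^{\alpha}\mathcal L_Jf(x,v)|\leqslant C\sum_{i,\,\nu\leqslant\alpha}\E\co(1+|v-V_i^i|^{|\alpha-\nu|+1})\po|\partial^{\nu}f(x,V^i)|+|\partial^{\nu}f(x,v)|\pf\cf\,,\]
and the Gaussian moment estimates of Lemma~\ref{lem:Jpartlyapu} (in particular $\E[V_b(x,V^i)]\leqslant CV_{b'}(x,v)$ for any $b'$ with $b\rho^2/(1-2b(1-\rho^2))<b'$, automatic since $b<1/2$) yield $\|\mathcal L_Jf\|_{b',k}\leqslant C\|f\|_{b,k}$.

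For the three linear-Gaussian semigroups, explicit expressions reduce the question to direct computation. For $e^{t\mathcal L_A}f(x,v)=f(x+tv,v)$, the chain rule gives $\partial_v^{\beta}f(x+tv,v)=((t\partial_x+\partial_v)^{\beta}f)(x+tv,v)$, whose coefficients are bounded uniformly in $t\in[0,t_0]$, and $V_b$ is $x$-invariant. For $e^{t\mathcal L_B}f(x,v)=f(x,v-t\nabla U_0(x))$, Leibniz yields bounded polynomials in the derivatives of $U_0$, and Young's inequality gives $V_b(x,v-t\nabla U_0(x))\leqslant CV_{b(1+\veps)}(x,v)$ for any $\veps>0$, choosing $\veps$ so that $b(1+\veps)\leqslant b'$. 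For $e^{t\mathcal L_O}$, derivatives commute with the Gaussian expectation (with a factor $e^{-\gamma t}$ per $v$-derivative), and the explicit Gaussian integral already used in Lemma~\ref{lem:Opartlyapu} bounds $\E[V_b(x,e^{-\gamma t}v+\sqrt{1-e^{-2\gamma t}}\xi)]$ uniformly in $t\in[0,t_0]$.

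The main obstacle will be $e^{t\mathcal L_J}$, whose derivatives admit no explicit form. The zero-order bound $\|e^{t\mathcal L_J}f\|_{V_{b'}}\leqslant C\|f\|_{V_b}$ follows from the Lyapunov computation of Lemma~\ref{lem:Jpartlyapu} applied to $|f|/V_b$ and the stochastic representation $e^{t\mathcal L_J}f(x,v)=\E[f(x,W_t^{x,v})]$, iterated $\lceil 2t_0/\delta_0\rceil$ times to cover $t\in[0,t_0]$. For $|\alpha|\geqslant 1$ I would proceed by induction on $|\alpha|$ using the Duhamel identity
\[\partial^{\alpha}e^{t\mathcal L_J}f=e^{t\mathcal L_J}\partial^{\alpha}f+\int_0^t e^{(t-s)\mathcal L_J}[\partial^{\alpha},\mathcal L_J]e^{s\mathcal L_J}f\,\dd s\,.\]
The commutator expansion from the proof of Lemma~\ref{lem:commutator-jump-Hk} splits $[\partial^{\alpha},\mathcal L_J]h$ into terms involving $\partial^{\nu}h(\cdot,V^i)$ with $\nu<\alpha$, controlled by the induction hypothesis, plus a single order-$|\alpha|$ term of the form $(\rho^{\alpha_{i+d}}-1)\E[\partial^{\alpha}h(x,V^i)\Psi(\ldots)]$; the latter produces a self-referential contribution $\int_0^t g_{\alpha}(s)\,\dd s$ with $g_{\alpha}(s):=\|\partial^{\alpha}e^{s\mathcal L_J}f\|_{V_{b'}}$, which Gronwall's lemma absorbs at a cost exponential in $t_0$, closing the estimate with $k'=k$ at the price of an arbitrarily small weight loss $b\to b'$.
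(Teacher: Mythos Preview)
Your treatment of the four generators and of the three explicit semigroups $e^{t\mathcal L_A},e^{t\mathcal L_B},e^{t\mathcal L_O}$ coincides with the paper's: Leibniz (or Fa\`a di Bruno) plus absorption of polynomial factors into a slightly larger Gaussian weight. The genuine divergence is in $e^{t\mathcal L_J}$. The paper proceeds probabilistically, expanding $\E_z[f(Z_t)]$ according to the number of jumps and showing by induction on $n$ that $\partial^{\alpha}\E_z[f(Z_t)\1_{\{n\text{ jumps}\}}]\leqslant C^n t^n\|f\|_{b,k}V_{b'}(z)$, then summing for $t$ small and iterating to reach $t_0$. Your route is analytical: Duhamel plus the commutator formula from Lemma~\ref{lem:commutator-jump-Hk}, closing by Gr\"onwall on $g_\alpha(t)=\|\partial^{\alpha}e^{t\mathcal L_J}f\|_{V_{b'}}$. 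This is correct; the key point that makes Gr\"onwall close \emph{without} weight loss is that $\E[V_{b'}(x,V^i)(1+|v_i-V_i^i|^n)]\leqslant CV_{b'}(x,v)$ at the \emph{same} exponent $b'$ (since the Gaussian computation in Lemma~\ref{lem:Jpartlyapu} gives $\E[V_{b'}(x,V^i)]\leqslant C(1+|v|^2)e^{b'\alpha|v|^2}$ with $\alpha<1$), so the top-order self-referential term is genuinely linear in $g_\alpha$. Your approach is arguably cleaner because it reuses the commutator computation already available, whereas the paper re-derives pointwise bounds on $\partial^{\alpha}(\lambda(z)e^{-\lambda(z)s}q(z,\tilde z))$ from scratch; on the other hand the paper's Dyson expansion has the merit of making the smoothness of $t\mapsto e^{t\mathcal L_J}f$ explicit, which in your argument is implicitly required to write the Duhamel identity and would need a short a~priori justification (e.g.\ via the same number-of-jumps series, or by truncating the jump rate).
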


In the proof of this lemma, we will need the formula 
for higher-order derivatives of composite functions \cite{Ma2009}:
\begin{prop}[Higher derivatives of composite functions]\label{composee}
Let $g:\R^n\rightarrow\R^l$ and $f:\R^l\rightarrow\R$ be smooth functions, and $\alpha=(\alpha_1,\dots,\alpha_n)\in\N^n$ be a multi-index. $\alpha$ is said to be decomposed into $s\in\N^{*}$ (pairwise distinct) parts $p_1,...,p_s$ in $\N^n$ with multiplicities $m_1,\dots,m_s$ in $\N^l$ (the $p$'s and the $m$'s are multi-indexes) if 
\[ \alpha = |m_1|p_1+\dots |m_s|p_s\]
holds and all parts are different. We define the total multiplicity by
\[ m = m_1+\dots +m_s\]
and $\mathcal{D}_\alpha$ as the set of all such decompositions of $\alpha$. Then,
\[\partial^\alpha (f\circ g)(x) = \alpha!\sum_{(s,p,m)\in\mathcal{D}} (\partial^m f)(g(x))\prod_{k=1}^s\frac{1}{m_k!}\co\frac{1}{p_k!}\partial^{p_k} g(x)\cf^{m_k} \]
where $\alpha! = \prod_j \alpha_j!$ and $x^\alpha = \prod_j x_j^{\alpha_j}$. The term $\co\frac{1}{p_k!}\partial^{p_k} g(x)\cf^{m_k}$ is thus the short form for
\[\prod_{i=1}^{l}\co\frac{1}{p_{k,1}!\dots p_{k,n}!}\partial^{p_k}g_i(x)\cf^{m_{k,i}}\]
where $g=(g_1,\dots,g_l)$.
\end{prop}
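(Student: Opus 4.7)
The plan is to treat each of the eight operators in turn, relying on the pointwise weight-trading inequality: for any $b<b'<1/2$ and any $N\geqslant 0$ one has $(1+|v|)^N V_b(x,v) \leqslant C V_{b'}(x,v)$ for some $C=C(b,b',N)$. This absorbs any polynomial factor of $|v|$ coming from differentiations at the cost of a small bump of the weight parameter. The announced values $k'\in\{k,k-1,k-2\}$ merely reflect the differential order of each $T$.

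For the four generators, the bounds come from direct Leibniz expansions. For $\mathcal L_A h=v\cdot\na_x h$, Leibniz gives $\partial^\alpha \mathcal L_A h=v\cdot\na_x\partial^\alpha h+\text{(l.o.t.)}$ where the lower-order terms involve at most $|\alpha|$ derivatives of $h$; the factor $|v|$ is absorbed via the weight bump, giving $k'=k-1$. For $\mathcal L_B h=-\na U_0\cdot\na_v h$ the same reasoning applies, using that all derivatives of $\na U_0$ are bounded by Assumption~\ref{assu}, so $k'=k-1$ with no weight loss. For $\mathcal L_O$, the Laplacian $\Delta_v$ costs two derivatives, giving $k'=k-2$, while the drift $\gamma v\cdot\na_v$ is handled as for $\mathcal L_A$. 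For $\mathcal L_J$, we use the pointwise Leibniz-type formula for $\partial^\alpha\mathcal L_J^i h$ already computed during the proof of Lemma~\ref{lem:commutator-jump-Hk}; its factors $\partial^\beta\Psi(\partial_i U_1(v_i-V_i^i)/2)$ are bounded by $C(1+|v-V^i|^{|\beta|})$ (since $\Psi$, $\na U_1$ and all their derivatives are bounded), and the resulting Gaussian expectation is controlled through $\E[V_b(x,V^i)(1+|v-V^i|^N)]\leqslant CV_{b+\eta}(x,v)$ for arbitrarily small $\eta>0$, obtained exactly as in the proof of Proposition~\ref{prop:deflyapunov}; choosing $\eta$ so that $b+\eta<b'$ yields $k'=k$.

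For the three explicit semigroups, we simply differentiate the closed-form formulas recalled in the definition of the scheme. For $e^{t\mathcal L_A}h(x,v)=h(x+tv,v)$, the chain rule produces derivatives of $h$ at $(x+tv,v)$ multiplied by powers $t^j\leqslant t_0^j$, and since $V_b(x+tv,v)=V_b(x,v)$ depends only on $v$, no weight bump is needed. For $e^{t\mathcal L_B}h(x,v)=h(x,v-t\na U_0(x))$ the same reasoning applies, with the extra estimate $V_b(x,v-t\na U_0(x))\leqslant C(t_0)V_{b(1+\veps)}(x,v)$ obtained via Young's inequality together with $\|\na U_0\|_\infty<\infty$, picking $\veps$ small enough so that $b(1+\veps)\leqslant b'$. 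For $e^{t\mathcal L_O}$, differentiation of the Gaussian convolution yields $\partial_v^\beta e^{t\mathcal L_O}h=e^{-\gamma t|\beta|}\E[(\partial_v^\beta h)(x,e^{-\gamma t}v+\sqrt{1-e^{-2\gamma t}}\xi)]$, and the weighted bound is precisely the computation of $OV_b$ in Lemma~\ref{lem:Opartlyapu}, which gives $\E[V_b(x,e^{-\gamma t}v+\sqrt{1-e^{-2\gamma t}}\xi)]\leqslant(1+Ct)V_{b(1-ct)}(x,v)\leqslant CV_b(x,v)$. Hence $k'=k$ suffices in all three cases.

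The main obstacle is $e^{t\mathcal L_J}$, for which there is no pointwise closed form. I would handle it by a Gronwall argument. Setting $u_t=e^{t\mathcal L_J}h$ and using $\partial_t u_t=\mathcal L_J u_t$, each derivative satisfies
\[
\partial_t \partial^\alpha u_t = \mathcal L_J \partial^\alpha u_t + [\partial^\alpha,\mathcal L_J]u_t\,.
\]
The first term on the right, viewed as a linear evolution acting on $\partial^\alpha u_t$, preserves the weighted sup-norm $\|\cdot\|_{V_{b'}}$ up to a factor $(1+Ct)$; this is the pointwise analogue of the computation behind Lemma~\ref{lem:Jpartlyapu}, obtained by writing $\mathcal L_J=K-\lambda\,\mathrm{Id}$ (integral operator minus multiplication) and using the corresponding Feynman–Kac representation. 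The commutator $[\partial^\alpha,\mathcal L_J]u_t$ is expanded via the same pointwise formula as for the generator $\mathcal L_J$ and, by the estimate already established there, is bounded in $\|\cdot\|_{V_{b'}}$ by $C\sum_{|\beta|<|\alpha|}\|\partial^\beta u_t\|_{V_{b''}}$ for some intermediate $b''\in(b,b')$. Inducting on $|\alpha|$ from $0$ to $k$ through a finite chain of weights $b<b_1<\dots<b_k<b'$ and applying Gronwall's lemma on $[0,t_0]$ at each step yields the desired uniform bound with $k'=k$, completing the proof.
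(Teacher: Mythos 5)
Your proposal does not prove the statement in question. Proposition~\ref{composee} is the multivariate Fa\`a di Bruno formula: a purely combinatorial identity expressing $\partial^\alpha(f\circ g)$ as a sum over decompositions $\alpha=|m_1|p_1+\dots+|m_s|p_s$ with the explicit coefficient $\alpha!\prod_k\frac{1}{m_k!}\bigl[\frac{1}{p_k!}\bigr]^{m_k}$. Nothing in your argument engages with that identity: there is no induction on $|\alpha|$, no accounting of which decompositions arise when one more derivative $\partial_{x_j}$ is applied, and no verification of the combinatorial coefficient. What you have written is instead a (reasonable) outline of the proof of Lemma~\ref{lem:stability} --- the stability of the weighted norms $\|\cdot\|_{b,k}$ under the eight operators of the splitting scheme --- which is the result for whose proof Proposition~\ref{composee} is invoked as a tool. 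The two statements live at entirely different levels: one is a universal chain-rule identity for smooth maps $g:\R^n\to\R^l$, $f:\R^l\to\R$; the other is an estimate specific to the velocity jump Langevin generator. (For what it is worth, the paper does not prove Proposition~\ref{composee} either; it cites the reference \cite{Ma2009}.)

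If you want to actually prove Proposition~\ref{composee}, the standard route is induction on $|\alpha|$: assume the formula for $\alpha$, apply $\partial_{x_j}$, distribute the derivative over $(\partial^m f)(g(x))$ (which raises some $m_k$ by a unit vector of $\N^l$ and multiplies by a first derivative of a component of $g$, i.e.\ introduces or increments the part $p=e_j$) and over each factor $[\partial^{p_k}g]^{m_{k,i}}$ (which replaces a part $p_k$ by $p_k+e_j$ in one of its occurrences), and then check that the re-indexed sum runs exactly over the decompositions of $\alpha+e_j$ with the stated normalization. Alternatively one can argue by identifying coefficients in the formal Taylor expansion of $f(g(x+h))$ in $h$. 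Either way, the weight-trading and Gronwall machinery in your proposal is irrelevant to this statement.
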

\begin{proof}[Proof of Lemma \ref{lem:stability}]
Let $f\in\mathcal A$, $k\in\N^*$, $b\in(0,1/2)$ such that $||f||_{b,k}<\infty$ and $b'\in(b,1/2)$. As a first step, we prove the result for $t_0$ small enough.

First, for all $\alpha\in\N^{2d}$ with $|\alpha|\leqslant k-1$,
\begin{equation*}
|\partial^\alpha \mathcal{L}_B f| = |\partial^{\alpha}(\nabla U_0\cdot\nabla_v f)| = \sum_{i=1}^d\sum_{\nu\leqslant\alpha}|\partial^\nu\partial_i U_0\partial^{\alpha-\nu}\partial_{v_i}f|
\leqslant CV_b(z)||f||_{b,k}\,,
\end{equation*}
thus
\[ ||\mathcal{L}_B f||_{b,k-1} \leqslant C||f|_{b,k}\,.\]
Similarly, for $|\alpha|\leqslant k-1$,
\begin{equation*}
    |\partial^\alpha \mathcal{L}_A f| = |\partial^\alpha(v\cdot\partial_x f)| \leqslant |v|\sum_{i=1}^d \sum_{\nu\leqslant\alpha}\partial^\nu\partial_{x_i} f \leqslant C|v|V_b(z)||f||_{b,k} \leqslant CV_{b'}(z)||f||_{b,k}\,,
\end{equation*}
which yields
\[||\mathcal{L}_A f||_{b',k-1}\leqslant C||f|_{b,k}\,.\]
If $|\alpha|\leqslant k-2$,
\[
|\partial^\alpha \mathcal{L}_O f| = |-\gamma\partial^{\alpha}(v\cdot\nabla_v f)+\gamma\partial^\alpha\Delta_v f| \leqslant CV_{b'}(z)||f||_{b,k}\,,
\]
and thus
\[||\mathcal{L}_O f||_{b',k-2}\leqslant C||f|_{b,k}\,.\]
Regarding the jump generator, we saw in the proof of Lemma \ref{lem:commutator-jump-Hk} that for all $\nu_1,\nu_2\in\N^d$,
\[\partial_x^{\nu_1}\partial_v^{\nu_2}(f(x,V^i)) =  \rho^{\nu_{2,i}}(\partial_x^{\nu_1}\partial_v^{\nu_2}f)(x,V^i)\,,\]
and that the only non-vanishing derivatives of $\Psi\po \frac{\partial_i U_1}{2}(v_i-V^i_i)\pf$ are of the type $\partial_x^\nu\partial_{v_i}^n \Psi\po \frac{\partial_i U_1}{2}(v_i-V^i_i)\pf$ with
\begin{equation*}
    \left|\partial_x^{\nu}\partial_{v_i^n}\Psi\po\frac{\partial_i U_1}{2}(v_i-V^i_i)\pf\right| \leqslant C\po 1+|v_i-V_i^i|^{|\nu|+n} \pf\,.
\end{equation*}
Therefore, if $|\alpha|\leqslant k$,
\begin{align*}
    \partial^\alpha \mathcal{L}_J f(z) &= \frac{2}{1-\rho}\sum_i\partial^\alpha\E\co(f(x,V^i)-f(x,v))\Psi\po\frac{\partial_i U_1}{2}(v_i-V^i_i)\pf\cf \\
    &= \frac{2}{1-\rho}\sum_i\sum_{\nu\leqslant\alpha}\E\co\partial^\nu (f(x,V^i)-f(x,v))\partial^{\alpha-\nu}\Psi\po\frac{\partial_i U_1}{2}(v_i-V^i_i)\pf\cf \\
    &\leqslant C\frac{2}{1-\rho}\sum_i\sum_{\nu\leqslant\alpha}\E\co\po\rho^{\nu_{d+i}}\partial^\nu f(x,V^i)-\partial^\nu f(x,v)\pf(1+|v_i-V_i^i|^{|\alpha-\nu|})\cf \\
    &\leqslant C||f||_{b,k}\sum_i\sum_{\nu\leqslant\alpha}\E\co \po V_b(x,V^i)+V_b(z)\pf(1+|v_i-V_i^i|^{|\alpha-\nu|})\cf \\
    &\leqslant C||f||_{b,k}\po V_{b'}(z)+\E[(1+|v|^{|\alpha|}+|\new{\xi}|^{|\alpha|})V_b(x,V^i)]\pf\,.
\end{align*}
In the proof of Lemma \ref{lem:Jpartlyapu}, we saw that
\[
    \E_z[\exp(b|\rho v_i + \sqrt{1-\rho^2}\new{\xi}|^2)]\leqslant C V_b(z)\,,
\] 
and that if $a_1<1/2$,
\[\E[|\new{\xi}|^n\exp(a_1\new{\xi}^2+a_2\new{\xi})] = \frac{1}{\sqrt{1-2a_1}}\exp\po\frac{a_2^2}{2(1-2a_1)}\pf\E[|X|^n]\,,\]
where $X\sim\mathcal{N}\po a_2/(1-2a_1),1/(1-2a_1)\pf \sim  a_2/(1-2a_1)+(1/\sqrt{1-2a_1})\new{\xi}$ with $\new{\xi}\sim\mathcal{N}(0,1)$. Therefore,
\[\E[|\new{\xi}|^{|\alpha|}V_b(x,V^i)] \leqslant CV_{b'}(z)\,,\]
and thus
\[\partial^{\alpha}Jf(z) \leqslant C||f||_{b,k}V_{b'}(z)\,,\]
which yields
\[||\mathcal{L}_J||_{b',k}\leqslant C||f||_{b,k}\,.\]
Let us now look at the semi-groups. The derivatives of $U$ being bounded (recall that the position space is a torus in this section), we have for all $t<1$ and $|\alpha|\leqslant k$,
\begin{align*}
    \partial^\alpha e^{tB}f(z) &= \partial^\alpha f(x,v-t\nabla U_0(x)) \\
    &= \alpha!\sum_{(s,p,m)\in\mathcal{D}} (\partial^m f)(x,v-t\nabla U_0(x))\prod_{k=1}^s\frac{1}{m_k!}\co\frac{1}{p_k!}\partial^{p_k} (x,v-t\nabla U_0(x))\cf^{m_k} \\
    &\leqslant C||f||_{b,k} V_b(x,v-t\nabla U_0(x)) \leqslant C(1+C_\veps t)||f||_{b,k} V_{b(1+\veps t)}(z)\,,
\end{align*}
where the last inequality comes from the result of Lemma \ref{lem:Bpartlyapu}. By choosing  $\veps$ small enough, we have $b(1+\veps t)<b'$, hence the result:
\[ ||e^{tB}f||_{b',k}\leqslant C||f||_{b,k}\,.\]
Similarly, for  $t<1$ and $|\alpha|\leqslant k$,
\begin{align*}
    \partial^\alpha e^{tA}f(z) &= \partial^\alpha f(x+tv,v) \\
    &= \alpha!\sum_{(s,p,m)\in\mathcal{D}} (\partial^m f)(x+tv,v)\prod_{k=1}^s\frac{1}{m_k!}\co\frac{1}{p_k!}\partial^{p_k} (x+tv,v)\cf^{m_k} \\
    &\leqslant C ||f||_{b,k}V_b(z)\,,
\end{align*}
which yields
\[ ||e^{tA}f||_{b,k}\leqslant C||f||_{b,k}\,.\]
Again, for $t<1$ and $|\alpha|\leqslant k$,
\begin{align*}
     \partial^\alpha e^{tO}f(z) &= \E[\partial^\alpha f(x,e^{-\gamma t}v+\sqrt{1-e^{-2\gamma t}}\new{\xi})] \\
    &= \alpha!\sum_{(s,p,m)\in\mathcal{D}} \E\co(\partial^m f)(x,e^{-\gamma t}v+\sqrt{1-e^{-2\gamma t}}\new{\xi})\prod_{k=1}^s\frac{1}{m_k!}\co\frac{1}{p_k!}\partial^{p_k} (x,e^{-\gamma t}v+\sqrt{1-e^{-2\gamma t}}\new{\xi})\cf^{m_k}\cf \\
    &\leqslant C||f||_{b,k}\E[V_b(x,e^{-\gamma t}v+\sqrt{1-e^{-2\gamma t}}\new{\xi})] \\
    &\leqslant C||f||_{b,k}(1+C_2 t)|v|^{|\alpha|}V_{b(1-C_1t)}(z) \\
    &\leqslant C||f||_{b,k}V_{b}(z)\,,
\end{align*}
where the second to last inequality comes from the result of Lemma \ref{lem:Opartlyapu}, hence
\[ ||e^{tO}f||_{b,k}\leqslant C||f||_{b,k}\,.\]
Finally, by denoting $(Z_t)$ the jump process after a time $t$ and $Z_n$ the jump process after $n$ jumps,
\begin{align*}
    \partial^\alpha e^{tJ}f(z) &= \sum_{n=0}^\infty \partial^\alpha\E[f(Z_t)\1_{\{n \text{ jumps}\}}]\,.
\end{align*}
Recall that we showed in the proof of Lemma \ref{lem:Jpartlyapu} that for any $b<1/2$, there exist $C>0$ such that
\[\E_z[V_b(Z_\delta)\1_{\{n \text{ jumps}\}}] \leqslant C^n\delta^n(1+V_b(z))\,.\]
As we mentioned earlier, 
\begin{equation*}
    \left|\partial_x^{\nu}\partial_{v_i^n}\Psi\po\frac{\partial_i U_1}{2}(v_i-V_i^i)\pf \right| \leqslant C\po 1+|v_i-V^i_i|^{|\nu|+n} \pf\,,
\end{equation*}
therefore
\[\partial^\alpha\lambda(z)\leqslant C(1+|v|^{|\alpha|})\,.\]
As a consequence, for $t<1$ and $|\alpha|\leqslant k$,
\begin{align*}
    E_z[\partial^\alpha f(Z_t)\1_{\{0 \text{ jumps}\}}] &= \partial^\alpha (f(z) e^{-\lambda(z)t}) \\
    &= \sum_{\nu\leqslant\alpha}\binom{\alpha}{\nu}\partial^\nu(e^{-\lambda(z)t})\partial^{\alpha-\nu}f(z) \\
    &\leqslant C||f||_{b,k}V_b(z)te^{-\lambda(z)t}\sum_{\nu\leqslant\alpha}\alpha!\sum_{(s,p,m)\in\mathcal{D}} \prod_{k=1}^s\frac{1}{m_k!}\co\frac{1}{p_k!}\partial^{p_k}\lambda(z) \cf^{m_k} \\
    &\leqslant C||f||_{b,k}t(1+|v|^{|\alpha|})V_b(z) \\
    &\leqslant C||f||_{b,k}tV_{b'}(z)\,.
\end{align*}
Then, notice that for $k\geq 1$,
\begin{align*}
    \partial^\alpha\E_z[f(Z_\delta)\1_{\{n \text{ jumps}\}}] &= \partial^\alpha \int_0^t\int_{\T^d\times\R^d}\lambda(z)e^{-\lambda(z) s}q(z,\tilde{z})\E_{\tilde{z}}[f(Z_{t-s})\1_{\{n-1 \text{ jumps}\}}]\dd\tilde{z}\dd s \\
    &\leqslant C||f||_{b,k}\int_0^t\int_{\T^d\times\R^d}\partial^{\alpha}(\lambda(z)e^{-\lambda(z) s}q(z,\tilde{z}))\E_{\tilde{z}}[V_b(Z_{t-s})\1_{\{n-1 \text{ jumps}\}}]\dd\tilde{z}\dd s \\
    &\leqslant C^k||f||_{b,k} t^{k-1}(1+V_b(z))\int_0^t\int_{\T^d\times\R^d}\partial^{\alpha}(\lambda(z)e^{-\lambda(z) s}q(z,\tilde{z}))\dd\tilde{z}\dd s\,.
\end{align*}
Let us remind that by denoting $z=(x,v)$ and $\tilde{z}=(\tilde{x},\tilde{v})$,
\[ \lambda(z)q(z,\tilde{z}) \propto \sum_{i=1}^d \Psi\po\frac{\partial_i U_1}{2}\po v_i-\tilde{v}_i\pf\pf \exp\po-\frac{(\tilde{v}_i-\rho v_i)^2}{2(1-\rho^2)}\pf\,,\]
and hence
\begin{align*}
    \partial^\alpha (\lambda(z)q(z,\tilde{z})) \propto \sum_{i=1}^d\sum_{\nu\leqslant\alpha}\partial^{\nu}\Psi\po\frac{\partial_i U_1}{2}\po v_i-\tilde{v}_i\pf\pf\partial^{\alpha-\nu}\exp\po-\frac{(\tilde{v}_i-\rho v_i)^2}{2(1-\rho^2)}\pf\,.
\end{align*}
On one hand,
\[\partial^\nu\Psi\po\frac{\partial_i U_1}{2}\po v_i-\tilde{v}_i\pf\pf\leqslant C(1+|v_i|^{|\nu|}+|\tilde{v}_i|^{|\nu|})\,,\]
and on the other hand
\[\partial^\nu \exp\po-\frac{(\tilde{v}_i-\rho v_i)^2}{2(1-\rho^2)}\pf \leqslant C(|v_i|+|\tilde{v}_i|)\exp\po-\frac{(\tilde{v}_i-\rho v_i)^2}{2(1-\rho^2)}\pf,\]
hence there exist $n_1,n_2\in\N$ such that
\[\partial^\alpha (\lambda(z)q(z,\tilde{z})) \leqslant C(1+|v|^{n_1}+|\tilde{v}|^{n_2})\exp\po-\frac{(\tilde{v}_i-\rho v_i)^2}{2(1-\rho^2)}\pf\,.\]
Then, since we already showed that 
\[\partial^\alpha e^{-\lambda(z)t}\leqslant Ct(1+|v|^{|\alpha|})\,,\]
by combining the obtained bounds we get
\[\partial^{\alpha}(\lambda(z)e^{-\lambda(z) s}q(z,\tilde{z}))\leqslant Cs(1+|v|^{n_1}+|\tilde{v}|^{n_2})\exp\po-\frac{(\tilde{v}_i-\rho v_i)^2}{2(1-\rho^2)}\pf\,.\]
This last expression is integrable with respect to $\tilde{z}$ with
\[\int |\tilde{v}_i|^m\exp\po-\frac{(\tilde{v}_i-\rho v_i)^2}{2(1-\rho^2)}\pf\dd\tilde{v}_i \leqslant C(1+|v|^m)\,,\]
therefore
\[\partial^\alpha\E_z[f(Z_\delta)\1_{\{n \text{ jumps}\}}]\leqslant ||f||_{b,k}C^n t^n V_{b'}(z)\,.\]
If $t<1/C$, we can sum those inequalities over $n$ to get
\[\partial^\alpha e^{tJ}f(z)\leqslant C ||f||_{b,k}V_{b'}(z)\,,\]
thus,
\[||e^{tJ}f||_{b',k}\leqslant C||f||_{b,k}\,.\]
This concludes the proof  of when $t_0< 1 \wedge 1/C$.

To get the result with any $t_0>0$, we simply use that, for an operator $\mathcal L$, $e^{t\mathcal L} = (e^{\frac1n\mathcal L})^n$, and thus it is sufficient to iterate the result with $n$ taken such that $t_0/n < 1 \wedge 1/C$.

\end{proof}

\begin{lem}\label{lem:bornereste}
For all $f\in\mathcal{A}$, all $t\geqslant 0$ and all $s\leqslant t$, there exist $0<b<1/2$, $C>0$ and $q>0$ such that
\begin{equation*}
    |S_3 P_t f(z)| +
    |L^4 P_t f (z)| +
    |\partial^4_s\overline{P}_s P_t f(z)| \leqslant Ce^{-qt}V_b(z).
\end{equation*}
\end{lem}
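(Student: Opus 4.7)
The plan is to reduce all three expressions to the pointwise bound on derivatives of $g_t:=P_tf-\mu(f)$ given by Theorem~\ref{thm:estimates}, and then to control the operator compositions via Lemma~\ref{lem:stability}.

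First, each of the three operators annihilates the constant function $1$: indeed $\mathcal L_A,\mathcal L_B,\mathcal L_J,\mathcal L_O$ all kill $1$, hence $L^4 1=0$ and $S_3 1=0$ (as a linear combination of iterated commutators of such operators), while $\overline{P}_s 1=1$ yields $\partial_s^4 \overline{P}_s 1=0$. Thus one may replace $P_tf$ by $g_t$ in the three quantities to be bounded. Next, Theorem~\ref{thm:estimates} provides, for every multi-index $\alpha$, constants $C,q>0$ and $b_0\in(0,1)$ such that $|\partial^\alpha g_t(z)|\leqslant Ce^{-qt}e^{b_0H(z)}$; since $U$ is bounded on $\T^d$, $e^{b_0H(z)}\leqslant C V_{b_0/2}(z)$. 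Taking the maximum over a finite family of multi-indices, for each $N\in\N$ there exist $b_0\in(0,1)$ and $C,q>0$ with $\|g_t\|_{b_0/2,N}\leqslant Ce^{-qt}$.

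I would then exploit that $L^4$ is a sum of finitely many compositions of at most four elements of $\{\mathcal L_A,\mathcal L_B,\mathcal L_J,\mathcal L_O\}$ (using $L=\mathcal L_A+\mathcal L_B+\mathcal L_J+\mathcal L_O$), that $S_3$ is likewise a linear combination of compositions of at most three such operators, and that Leibniz applied to the product defining $\overline{P}_s$ expresses $\partial_s^4 \overline{P}_s$ as a finite sum of products consisting of the seven semigroup factors $e^{cs\mathcal L_X}$ appearing in $\overline{P}_s$, with exactly four generator factors $\mathcal L_Y$ inserted at some of the intermediate positions. By Lemma~\ref{lem:stability}, each single generator or semigroup factor takes $\|\cdot\|_{b,k}$ to $\|\cdot\|_{b',k'}$ for any $b'>b$, with $k'\geqslant k-2$ for generators and $k'=k$ for semigroups, uniformly for $s$ in a bounded interval such as $[0,\delta_0]$. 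Iterating along each such product gives $b_1\in(b_0/2,1/2)$ and $N_0\in\N$ (both independent of $t$ and of $s\in[0,\delta_0]$) such that
\[
|L^4 g_t(z)|+|S_3 g_t(z)|+\sup_{s\in[0,\delta_0]}|\partial_s^4 \overline{P}_s g_t(z)|\leqslant C\|g_t\|_{b_0/2,N_0}V_{b_1}(z)\leqslant Ce^{-qt}V_{b_1}(z),
\]
which is the announced bound with $b=b_1$.

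The main bookkeeping step is the Leibniz expansion of $\partial_s^4 \overline{P}_s$, which produces a finite but sizable sum; each term is however handled uniformly in $s\in[0,\delta_0]$ by iterating Lemma~\ref{lem:stability}. The crucial point allowing the argument to close is that in Theorem~\ref{thm:estimates} the weight $b_0$ can be chosen strictly less than $1$ for any fixed but finite order of derivatives, so that $b_0/2<1/2$ and the finitely many slight bumps $b_0/2\to b_1<1/2$ produced by the operator applications stay safely inside $(0,1/2)$.
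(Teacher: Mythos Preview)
Your proof is correct and follows essentially the same approach as the paper: replace $P_tf$ by $g_t=P_tf-\mu(f)$ using that all operators involved annihilate constants, invoke Theorem~\ref{thm:estimates} to obtain $\|g_t\|_{b,k}\leqslant Ce^{-qt}$, expand $\partial_s^4\overline{P}_s$ as a finite sum of products of generators and semigroup factors, and iterate Lemma~\ref{lem:stability}. Your bookkeeping of the weight parameter (ensuring $b_0/2<1/2$ via boundedness of $U$ on $\T^d$, and tracking the finitely many bumps $b\mapsto b'$) is in fact slightly more explicit than the paper's.
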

\begin{proof}
Let $f\in\mathcal{A}$ and $t\geqslant 0$. Theorem \ref{thm:estimates} implies that $ f_t = P_t f-\mu f\in\mathcal{A}$: for all $k\in\N^*$, there exist $C,q>0$ and $b\in(0,1/2)$ such that for all $t>0$,
\[||f_t||_{b,k}\leqslant Ce^{-qt}\,.\]
On one side, $S_3$ and $L_4$ consist in sums of several successive applications of $\mathcal{L}_J$, $\mathcal{L}_A$, $\mathcal{L}_B$ and $\mathcal{L}_O$. Since they are Markov generators (that leave at zero the constants), $S_3P_tf =S_3 f_t$ and $L^4P_t f = L^4 f_t$, and Lemma \ref{lem:stability} yields that for $b'\in(b,1/2)$, there exist $C>0$ such that 
\begin{align*}
    ||S_3 f_t||_{b',k'} &\leqslant C||f||_{b,k}\\
    ||L^4 f_t||_{b',k'} &\leqslant C||f||_{b,k}\,,
\end{align*}
hence the result when $k'=0$. Regarding the derivatives of $\overline{P}_t$, we have 
\begin{align*}
    \partial_t \overline{P}_t = \frac{B}{2}e^{\frac{t}{2}B}e^{\frac{t}{2}J}...e^{\frac{t}{2}B}+e^{\frac{t}{2}B}\frac{J}{2}e^{\frac{t}{2}J}...e^{\frac{t}{2}B}+...e^{\frac{t}{2}B}e^{\frac{t}{2}J}e^{\frac{t}{2}A}e^{tO}e^{\frac{t}{2}A}e^{\frac{t}{2}J}\frac{B}{2}e^{\frac{t}{2}B}\,,
\end{align*}
then, by iterating,
\[\partial^4_t \overline{P}_t = \sum_{k_1,...k_7} \frac{B^{k_1}}{2^{k_1}}e^{\frac{t}{2}B}\frac{J^{k_2}}{2^{k_2}}e^{\frac{t}{2}J}\frac{A^{k_3}}{2^{k_3}}e^{\frac{t}{2}A}O^{k_4}e^{tO}\frac{A^{k_5}}{2^{k_5}}e^{\frac{t}{2}A}\frac{J^{k_6}}{2^{k_6}}e^{\frac{t}{2}J}\frac{B^{k_7}}{2^{k_7}}e^{\frac{t}{2}B}\,,\]
where the sum is over all nonnegative integers $k_1...k_7$ such that $k_1+...+k_7=4$.
Again, the result follows from Lemma \ref{lem:stability}.
\end{proof}

\begin{lem}\label{lem:bch}
For all $t>0$, all $f\in\mathcal{A}$ and all $z\in\T^d\times\R^d$,
\[ \overline{P}_t f(z) - P_t f(z) = t^3 S_3 f(z) + \mathcal{O}(t^4)\,.\]
More explicitly, the term of order $4$ is given by
\[\overline{P}_t f(z) - P_t f(z) - t^3 S_3 f(z)= \frac{1}{6}\int_0^t(t-s)^3(\partial^4_s\overline{P}_s-L^4 P_s)f(z)\dd s\,.\]
\end{lem}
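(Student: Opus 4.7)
The plan is to Taylor-expand both $s\mapsto P_s f(z)$ and $s\mapsto \overline{P}_s f(z)$ through order $3$ using the integral remainder form, check that their first three Taylor coefficients agree up to a correction $6S_3$ at order $3$, and conclude by subtraction. The integral formula in the statement then falls out immediately.

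\textbf{Step 1 (Taylor expansions).} Since $f\in\mathcal A$ and each of $L_A,L_B,L_J,L_O$ together with their associated semigroups stabilises $\mathcal A$ (Lemma~\ref{lem:stability}), the derivatives $\partial_s^k P_s f(z)=L^k P_s f(z)$ and $\partial_s^k \overline{P}_s f(z)$ are pointwise well-defined and continuous in $s$ for $0\leqslant k\leqslant 4$. Taylor's formula with integral remainder then yields
\begin{align*}
P_t f(z) &= \sum_{k=0}^{3}\tfrac{t^k}{k!}L^k f(z)+\tfrac{1}{6}\int_0^t (t-s)^3\,L^4 P_s f(z)\,ds\,,\\
\overline{P}_t f(z) &= \sum_{k=0}^{3}\tfrac{t^k}{k!}\,\partial_s^k\overline{P}_s f(z)\big|_{s=0}+\tfrac{1}{6}\int_0^t (t-s)^3\,\partial_s^4 \overline{P}_s f(z)\,ds\,.
\end{align*}

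\textbf{Step 2 (matching at orders $1$ and $2$).} Differentiating the seven-factor product $\overline{P}_s$ and evaluating at $s=0$, the product rule gives $\partial_s\overline{P}_s|_{s=0}=L$: each pair of outer half-steps on $B$, $J$ and $A$ sums to a full generator, and the central factor contributes $L_O$. At order $2$, the palindromic arrangement with halved outer coefficients is precisely tailored so that the Leibniz double sum reconstructs all ordered pairs $L_iL_j$ with equal weight, recovering $L^2=\sum_{i,j}L_iL_j$. This is the standard second-order consistency of symmetric splittings and is a direct combinatorial verification.

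\textbf{Step 3 (order $3$ and identification of $S_3$).} The crux is the third derivative. Rather than expanding seven factors by brute force, I would exploit the nested Strang factorisation
\begin{equation*}
\overline{P}_s=e^{\tfrac{s}{2}L_B}\,\Pi^{(1)}_s\,e^{\tfrac{s}{2}L_B},\quad \Pi^{(1)}_s=e^{\tfrac{s}{2}L_J}\,\Pi^{(2)}_s\,e^{\tfrac{s}{2}L_J},\quad \Pi^{(2)}_s=e^{\tfrac{s}{2}L_A}\,e^{sL_O}\,e^{\tfrac{s}{2}L_A},
\end{equation*}
and iterate the elementary two-operator Strang expansion
\begin{equation*}
e^{\tfrac{s}{2}X}e^{sY}e^{\tfrac{s}{2}X}=e^{s(X+Y)}+s^3\,\kappa\bigl(2[Y,[Y,X]]-[X,[X,Y]]\bigr)+\mathcal O(s^4),
\end{equation*}
valid for any operators $X,Y$ stabilising $\mathcal A$, with an explicit universal constant $\kappa$ obtained by direct term-by-term expansion of the three exponentials. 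Applied at the innermost level with $(X,Y)=(L_A,L_O)$, then at the middle level with $X=L_J$ and $Y$ effectively $L_A+L_O$, and finally at the outer level with $X=L_B$ and $Y=L_A+L_J+L_O$, the order-$3$ corrections propagate cleanly through the outer wrappings without interaction: each outer pair is $I+\mathcal O(s)$, so its product with an inner $\mathcal O(s^3)$ correction only differs from that correction by $\mathcal O(s^4)$. The three contributions thus add to exactly the three double-commutator blocks listed in $S_3$, with the overall coefficient $7/144$ pinned down by tracking $\kappa$ through the three nestings.

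\textbf{Step 4 (conclusion).} Subtracting the two Taylor identities of Step~1, the terms of orders $0,1,2$ cancel by Step~2, the order-$3$ terms combine into $t^3 S_3 f(z)$ by Step~3, and the remainders combine into the displayed integral. The $\mathcal O(t^4)$ qualitative bound used downstream follows from the pointwise controls on $\partial_s^4\overline{P}_s f$ and $L^4 P_s f$ provided by Lemma~\ref{lem:bornereste}.

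The main obstacle is Step~3, specifically the careful bookkeeping through the three nested Strang levels and the attribution of the overall coefficient of $S_3$; everything else reduces to routine Taylor expansion and consistency checks enabled by Lemma~\ref{lem:stability}.
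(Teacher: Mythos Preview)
Your approach is essentially the same as the paper's: both Taylor-expand $P_t$ and $\overline{P}_t$ to order $3$ with integral remainder, identify the leading error by iterating the symmetric-splitting expansion through the three nested levels (the paper phrases this via the BCH formula $e^{\frac{s}{2}X}e^{sY}e^{\frac{s}{2}X}=e^{s(X+Y)+s^3 c_3+\dots}$, you via the equivalent additive form), and control the remainder with Lemma~\ref{lem:bornereste}. Your key observation that an inner $\mathcal O(s^3)$ correction passes through the outer $I+\mathcal O(s)$ wrappers unchanged modulo $\mathcal O(s^4)$ is exactly why the three commutator blocks simply add, which is the same mechanism the paper exploits when summing $\hat S_3$, $\tilde S_3$, $\overline S_3$.
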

\begin{proof}

We use the same method as in \cite{stoltzsplitting}. Let $f\in\mathcal{A}$ and $z\in\T^d\times\R^d$. An order $3$ Taylor expansion of $P_t f (z)$ and $\overline{P}_t f(z)$  at $t=0$ gives
\[P_t f(z) = f(z) + tLf(z)+\frac{t^2}{2} L^2 f(z) + \frac{t^3}{6}L^3 f(z) + \frac{1}{6}\int_0^t (t-s)^3 L^4 P_sf(z)\dd s\,,\]
\[\overline{P}_t f(z) = f(z) + t\partial_t\overline{P}_t f(z)|_{t=0} + \frac{t^2}{2}\partial^2_t\overline{P}_t f(z)|_{t=0}+\frac{t^3}{6}\partial^3_t\overline{P}_t f(z)|_{t=0}+\frac{1}{6}\int_0^t (t-s)^3 \partial^4_s\overline{P}_sf(z)\dd s\,.\]
We proved in Lemma~\ref{lem:bornereste} that
\begin{align*}
    |L^4 P_t f (z)| &\leqslant Ce^{-qt}V_b(z) \\
    |\partial^4_s\overline{P}_s P_t f(z)| &\leqslant Ce^{-qt}V_b(z)\,,
\end{align*}
which shows that the previous integral remainders can be bound by $C(z)t^4$.
As we saw in the proof of Lemma~\ref{lem:bornereste}, for all $n\in\cco 1,3\ccf$,
\[\partial^n_t \overline{P}_t = \sum_{k_1,...k_7} \frac{B^{k_1}}{2^{k_1}}e^{\frac{t}{2}B}\frac{J^{k_2}}{2^{k_2}}e^{\frac{t}{2}J}\frac{A^{k_3}}{2^{k_3}}e^{\frac{t}{2}A}O^{k_4}e^{tO}\frac{A^{k_5}}{2^{k_5}}e^{\frac{t}{2}A}\frac{J^{k_6}}{2^{k_6}}e^{\frac{t}{2}J}\frac{B^{k_7}}{2^{k_7}}e^{\frac{t}{2}B}\,,\]
where the sum is over all nonnegative integers $k_1\dots k_7$ such that $k_1+\dots+k_7=n$. In fact, thanks to the particular palindromic form of $\overline{P}_t$, those operators can be computed algebraically using the symmetric Baker-Campbell-Hausdorff (BCH) formula (see for instance \cite{BCH2}, Section III.4.2), that states that for any $A,B$ elements of a Lie algebra of a Lie group,
\[\exp\po\frac{t}{2}A\pf\exp\po\frac{t}{2}B\pf\exp\po\frac{t}{2}A\pf = \exp(tS_1+t^3S_3+t^5S_5+...)\]
where
\begin{align*}
    S_1 &= A+B\\
    S_3 &= \frac{1}{24}(2[B,[B,A]] - [A,[A,B]])\,.
\end{align*}
Therefore, in our case,
\[e^{\frac{t}{2} A}e^{t O}e^{\frac{t}{2} A} = e^{t\mathcal{B}_1}\,,\]
with $\mathcal{B}_1 = \hat{S_1}+t^2\hat{S_3}+t^4\hat{S_5}+...$ with $\hat{S_1} = A+O$ and
\[\hat{S_3} = \frac{1}{24}(2[O,[O,A]]-[A,[A,O]])\,.\]
A second application of the formula gives
\[e^{\frac{t}{2} J}e^{t \mathcal{B}_1}e^{\frac{t}{2} J} = e^{t\mathcal{B}_2}\,,\]
where $\mathcal{B}_2 = \tilde{S_1}+t^2\tilde{S_3}+...$ with $\tilde{S_1} = \hat{S_1}+J = A+O+J$ and
\begin{align*}
    \tilde{S_3} &= \hat{S_3}+\frac{1}{24}(2[\hat{S_1},[\hat{S_1},J]]-[J,[J,\hat{S_1}]]) \\
    &= \frac{1}{24}(2[O,[O,A]]-[A,[A,O]] + 2[A+O,[A+O,J]]-[J,[J,A+O]])\,.
\end{align*}
A third and final application gives
\[\overline{P}_t = e^{\frac{t}{2} B}e^{t \mathcal{B}_2}e^{\frac{t}{2} B} = e^{t\mathcal{B}_3}\,,\]
with $\mathcal{B}_3 = \overline{S_1}+t^2\overline{S_3}+t^4 \overline{S_5}+...$ where $\overline{S_1} =\tilde{S_1}+B=A+B+J+O = L$ and 
\begin{align*}
    \overline{S_3} &= \tilde{S_3}+\frac{1}{24}(2[\tilde{S_1},[\tilde{S_1},B]]-[B,[B,\tilde{S_1}]]) \\
    &=\frac{1}{24}(2[O,[O,A]]-[A,[A,O]] + 2[A+O,[A+O,J]]-[J,[J,A+O]] \\ &\qquad +  2[A+J+O,[A+J+O,B]]-[B,[B,A+J+O]])\,.
\end{align*}
What we just showed is that 
\begin{equation*}
    \partial_t\overline{P}_t f(z)|_{t=0} = Lf(z),\qquad
    \partial_t^2\overline{P}_t f(z)|_{t=0} = L^2 f(z),\qquad 
    \partial_t^3\overline{P}_t f(z)|_{t=0} = (L^3 + 7\overline{S_3})f(z)\,.
\end{equation*}
Finally, set $S_3 = \frac{7}{6}\overline{S_3}$, which yields
\[ \overline{P}_t f(z) - P_t f(z) = t^3 S_3f(z) + \frac{1}{6}\int_0^t(t-s)^3(\partial^4_s\overline{P}_s-L^4 P_s)f(z)\dd s\,, \]
with 
\[\frac{1}{6}\left|\int_0^t(t-s)^3(\partial^4_s\overline{P}_s-L^4 P_s)f(z)\dd s\right| \leqslant C(z)t^4\,,\]
which concludes the proof.
\end{proof}

\begin{proof}[Proof of Theorem \ref{thm:finitetimeerror}]
We write the Talay-Tubaro expansion of the weak error.
\begin{align*}
    \E_z[f(Z_{n\delta})]-\E_z[f(\overline{Z}_n)] &= \E[P_{n\delta} f(z) - P_0 f(\overline{Z}_n)] \\
    &= \sum_{k=0}^{n-1} \E_z[ P_{(n-k)\delta} f(\overline{Z}_k) - P_{(n-k-1)\delta} f(\overline{Z}_{k+1})]\,.
\end{align*}
By conditioning each term by $\overline{Z}_k$, we have, since $(\overline{Z}_n)$ is a Markov chain,
\[\E_z[f(Z_{n\delta})]-\E_z[f(\overline{Z}_n)] = \sum_{k=0}^{n-1} \E[h_k(\overline{Z}_k)]\,, \]
where, by using the semi-group property of $(P_t)_{t\geqslant 0}$,
\begin{align*}
h_k(z) &= P_{(n-k)\delta} f(z) - \E[P_{(n-k-1)\delta}f(\overline{Z}_1)] \\
&= P_\delta P_{(n-k-1)\delta}f(z) - \overline{P}_\delta P_{(n-k-1)\delta}f(z)\,.
\end{align*}
We are now interested in the difference $P_\delta\varphi-\overline{P}_\delta\varphi$ for $\varphi\in\mathcal{A}$. As we saw in Lemma~\ref{lem:bch},
\[ \overline{P}_t \varphi(z) - P_t \varphi(z) = t^3 S_3\varphi(z) + \frac{1}{6}\int_0^t(t-s)^3(\partial^4_s\overline{P}_s-L^4 P_s)\varphi(z)\dd s\,. \]
Now, Lemma \ref{lem:bornereste} implies that there exist $b<1/2$ and $C>0$ such that
\[ |h_k(z)| = |P_\delta  P_{(n-k-1)\delta}f(z) - \overline{P}_\delta  P_{(n-k-1)\delta}f(z)| \leqslant C\delta^3 e^{-q(n-k-1)\delta} V_b(z)\,,\]
and then, by using Proposition \ref{prop:lyapunovscheme},
\[|\E_z[h_k(\overline{Z}_k)]|\leqslant C\delta^3 e^{-q(n-k-1)\delta}\E_z[V_b(\overline{Z}_k)] \leqslant  C\delta^3 e^{-q(n-k-1)\delta}\po e^{-C'\delta k}V_b(z)+1\pf\,.\]
By summing those inequalities on $k$, we have
\begin{align*}
    |\E_z[f(Z_t)]-\E_z[f(\overline{Z_n})]| &\leqslant \sum_{k=0}^{n-1}\E[|h_k(\overline{Z_k})|]  \\ 
    &\leqslant C\delta^3\po V_b(z)e^{-q(n-1)\delta}\sum_{k=0}^{n-1} e^{(q-C')k\delta}+\sum_{k=0}^{n-1} e^{-qk\delta}\pf \\
    &\leqslant C\delta^3V_b(z)\frac{e^{-qn\delta}-e^{-C'n\delta}}{e^{-q\delta}-e^{-C'\delta}}+C''\delta^2 \leqslant C\delta^2(V_b(z)e^{-qn\delta}+1)\,,
\end{align*}
which shows the result, with $C(n\delta,z) = C(V_b(z)e^{-qn\delta}+1)$. More explicitly,
\begin{align*}
    \E_z[f(Z_t)]-\E_z[f(\overline{Z_n})] &= \delta^3\po\sum_{k=0}^{n-1}\E[S_3 P_{(n-k-1)\delta}f(\overline{Z_k})]\pf + \frac{1}{6}\sum_{k=0}^{n-1}\int_0^\delta (\delta-s)^3\E_z[(\partial^4_s\overline{P}_s-L^4 P_s)g_k(\overline{Z_k})]\dd s\,,
\end{align*}
and the same computations lead to
\begin{align*}
\left|\sum_{k=0}^{n-1}\E[S_3 P_{(n-k-1)\delta}f(\overline{Z_k})]\right| &\leqslant C\po V_b(z)\frac{e^{-qn\delta}-e^{-C'n\delta}}{e^{-q\delta}-e^{-C'\delta}}+\frac{1}{1-e^{-q\delta}}\pf \leqslant \frac{C}{\delta}(V_b(z)e^{-qn\delta}+1)\,,
\end{align*}
and 
\begin{align*}
\frac{1}{6}\left|\sum_{k=0}^{n-1}\int_0^\delta  (\delta-s)^3\E_z[(\partial^4_s\overline{P}_s-L^4 P_s)g_k(\overline{Z_k})]\dd s\right| &\leqslant C\sum_{k=0}^{n-1}e^{-q(n-k-1)\delta}\E_z[V_b(\overline{Z_k})]\int_0^\delta (\delta-s)^3e^{-qs}\dd s \\
&\leqslant C\delta^4\po V_b(z)\frac{e^{-qn\delta}-e^{-C'n\delta}}{e^{-q\delta}-e^{-C'\delta}}+\frac{1}{1-e^{-q\delta}}\pf \\
&\leqslant C\delta^3(V_b(z)e^{-qn\delta}+1)\,.
\end{align*}
\end{proof}

\subsection{Ergodicity of the numerical scheme}\label{sec:ergodicityscheme}

\begin{proof}[Proof of Theorem~\ref{thm:BJAOAJBergodic}]
The result follows from  Harris Theorem (see for instance \cite[Theorem 1.2]{yetanother}; the constants are made explicit in \cite[Theorem 24]{BPSDGM}). More precisely, we will prove that, for $\delta$ small enough, $Q^{\lfloor 1/\delta\rfloor}$ satisfy the conditions of Harris theorem with constants which are independent from $\delta$. Namely, we need to check, first, a Lyapunov condition
\begin{equation}
    \label{eq:LyapHarris}
Q^{\lfloor 1/\delta\rfloor} V_b \leqslant \gamma V_b + C\,,\end{equation}
where $\gamma\in(0,1)$ and $C>0$ are independent from $\delta$, and, second, a local coupling condition: for any compact set $K$ of $\T^d \times \R^d$, there exists $\alpha>0$ (again, independent from $\delta$) such that for all $(x,v),(x',v')\in K$, 
\begin{equation}
    \label{eq:DoeblinHarris}
\|\delta_{(x,v)} Q^{\lfloor 1/\delta\rfloor} - \delta_{(x',v')} Q^{\lfloor 1/\delta\rfloor}\|_{TV} \leqslant 2 (1-\alpha)\,.
\end{equation}
When we will have proven these two conditions, we will get by \cite[Theorem 1.2]{yetanother} that there exists $C'',\lambda>0$ (which are independent from $\delta\in(0,\delta_0]$ since they can be expressed in terms of $b,\gamma,C$ and $\alpha$ where $\alpha$ is given by the coupling condition on the compact set $K=\{V_b \leqslant 4C/(1-\gamma) \}$) such that for all $n\in\N$ and all probability measures $\nu_1,\nu_2$ over $\T^d\times\R^d$,
\[\|\nu_1Q ^{n \lfloor 1/\delta\rfloor} - \nu_2 Q ^{n \lfloor 1/\delta\rfloor}\|_V \leqslant C'' e^{-\lambda n }\|\nu_1- \nu_2 \|_V\,.\]
Besides, \cite[Theorem 1.2]{yetanother} also gives the existence and uniqueness of an invariant measure $\mu_\delta$ for $Q^{\lfloor 1/\delta\rfloor}$ (which by standard semi-group argument is thus the unique invariant measure for $Q$). Before proceeding with the proofs of~\eqref{eq:LyapHarris} and \eqref{eq:DoeblinHarris}, let us first explain the conclusion of the proof of Theorem~\ref{thm:BJAOAJBergodic} from this. Thanks to~Proposition~\ref{prop:lyapunovscheme}, for all $k\geqslant 1$,
\[Q^k V_b \leqslant (1-C\delta)^k V_b + C' \frac{1-(1-C\delta)^k}{C} \leqslant \po 1+ \frac{C'}{C}\pf V_b\,,\]
since $V_b\geqslant 1$. As noticed in \cite[Equation (86)]{BPSDGM}, this implies that for all $\nu_1,\nu_2$,
\[\|\nu_1Q^k-\nu_2 Q^k\|_V \leqslant \po 1+ \frac{C'}{C}\pf\|\nu_1-\nu_2\|_V\,.\]
Decomposing any $n\geqslant 1$ as $n=m\lfloor 1/\delta\rfloor+k $ with $k\in\cco 0,\lfloor 1/\delta\rfloor-1\ccf$, using the invariance of $\mu_\delta$ by $Q$,
\begin{eqnarray*}
\|\delta_{(x,v)} Q^n - \mu_\delta\|_V & = & \|\delta_{(x,v)} Q^n - \mu_\delta Q^n \|_V \\
& \leqslant &  \po 1+ \frac{C'}{C}\pf \|\delta_{(x,v)} Q^{m\lfloor 1/\delta\rfloor} - \mu_\delta Q^{m\lfloor 1/\delta\rfloor} \|_V \\
& \leqslant & C'' e^{-\lambda m }\po 1+ \frac{C'}{C}\pf \|\delta_{(x,v)} - \mu_\delta  \|_V \\
& \leqslant & C'' e^{-\lambda (n-1)\delta  }\po 1+ \frac{C'}{C}\pf \po V_b(x,v) + \mu_\delta(V_b)\pf \,.
 \end{eqnarray*}
 Thanks to Proposition~\ref{prop:lyapunovscheme} and the invariance of $\mu_\delta$,
 \[\mu_\delta(V_b) \leqslant (1-C\delta) \mu_\delta(V_b)  + C'\delta \implies \mu_\delta(V_b) \leqslant \frac{C'}{C}\,.\]
 Using again that $V_b\geqslant 1$, we end up with 
\[\|\delta_{(x,v)} Q^n - \mu_\delta\|_V \leqslant C'' e^{-\lambda (n\delta-\delta_0)  }\po 1+ \frac{C'}{C}\pf^2 V_b(x,v) \,,\]
with $C,\lambda,C',C''$ which are independent from $\delta\in(0,\delta_0)$. This will conclude the proof of Theorem~\ref{thm:BJAOAJBergodic}.

It remains to check~\eqref{eq:LyapHarris} and \eqref{eq:DoeblinHarris}. The Lyapunov condition~\eqref{eq:LyapHarris} straightforwardly follows from Proposition~\ref{prop:lyapunovscheme}, using that $(1-C\delta)^{\lfloor 1/\delta\rfloor}$ goes to $e^{-C}<1$ as $\delta\rightarrow 0$. The rest of the proof is dedicated to establishing the coupling condition~\eqref{eq:DoeblinHarris}. This result has been established for the BAOAB chain in \cite{durmus2023}. More precisely, given a compact set $K$ of $\T^d\times\R^d$, \cite[Theorem 3]{durmus2023} shows that there exists $\delta_0',\varepsilon>0$ such that for all $\delta\in(0,\delta'_0)$ and $(x,v),(x',v')\in K$,
\[\|\delta_{(x,v)} \widetilde Q^{\lfloor 1/\delta\rfloor} - \delta_{(x',v')} \widetilde Q^{\lfloor 1/\delta\rfloor}\|_{TV} \leqslant 2 (1-\varepsilon)\,,\]
with $\widetilde Q$ the transition operator of the BAOAB chain with potential $U_0$ (i.e. the BJAOAJB chain with $U_1=0$). By the coupling characterisation of the total variation norm, it means that, for any $(x,v),(x',v') \in K$, it is possible to define two BAOAB chains $(X_n,V_n)_{n\in\N}$ and $(X_n',V_n')_{n\in\N}$ initialized respectively at these points which are equal at time $\lfloor 1/\delta\rfloor$ with probability lager than $\varepsilon>0$ (to see that we can define such a trajectory, see e.g. the proof of \cite[Lemma 3.2]{M13}). Denote $\mathcal A_1$ the event where these two BAOAB chains have merged before $\lfloor 1/\delta\rfloor $ transitions. In particular, for all $\delta\in(0,\delta_0')$, $\mathbb P(\mathcal A_1) \geqslant \varepsilon$.

Next, let us prove that there exists a compact set $K'$ of $\T^d\times\R^d$, independent from $\delta$ small enough and from $(x,v)\in K$, such that the probability that the BAOAB chain $(X_n,V_n)_{n\in\N}$ initialized at $(x,v)$ exits $K'$ before $\lfloor 1/\delta\rfloor$ transitions is smaller than $\varepsilon/4$. This classically follows from the fact $V_b$ is a Lyapunov function for the BAOAB chain. Indeed, notice that Assumption~\ref{assu} implies that the same assumptions are satisfied when replacing $U_1$ by $0$, which means that Proposition~\ref{prop:lyapunovscheme} applies to BAOAB. Using that $V_b\geqslant 1$, this implies that $\widetilde Q V_b \leqslant (1+C'\delta) V_b \leqslant e^{C'\delta} V_b$, i.e. $e^{-C'\delta n} e^{b|V_n|^2}$ is a supermartingale. For $R>0$, consider the stopping time $\tau = \inf\{n,|V_n|\geqslant R\}$. Then, letting $n\rightarrow \infty$ in $\mathbb E( e^{-C'\delta (n\wedge \tau)} e^{b|V_{n\wedge \tau}|^2}) \leqslant e^{b|v|^2}$, we get $\mathbb E( e^{-C'\delta  \tau}) \leqslant D e^{-bR^2}$ with $D=\sup\{e^{b|v_0|^2},(x_0,v_0)\in K\}$. In particular, $\mathbb P(\tau \leqslant \lfloor 1/\delta\rfloor ) \leqslant e^{C'} D e^{-bR^2}$. Taking $R$ large enough (depending on $K,b,C'$ and $\varepsilon$, but not on $\delta\in(0,\delta_0)$), conclusion follows with $K'=\T^d\times[0,R]$.

Considering the two coupled BAOAB chains $(X_n,V_n)_{n\in\N}$ and $(X_n',V_n')_{n\in\N}$  as before, denote by $\mathcal A_2$ the event where both chains remain in $K'$ up to time $\lfloor 1/\delta\rfloor$. The choice of $K'$ ensures that $\mathcal P(\mathcal A_2) \geqslant 1-\varepsilon/2$, from which $\mathbb P \po \mathcal A_1\cap \mathcal A_2\pf \geqslant \varepsilon/2$.

Now, we define two BJAOAJB chains by following the two previous BAOAB chains and adding random jumps. More precisely, let $E$ be a standard exponential variable independent from the BAOAB chains and let $S=\inf\{k\geqslant 0, E \leqslant \frac{\delta}{2}\sum_{j=0}^k \lambda(\widetilde X_j,\widetilde V_j)\}$, where $(\widetilde X_j,\widetilde V_j)_{j\in\N}$ are the intermediary of the chain $(X_n,V_n)_{n\in\N}$ (after the first B step of transition, and then after the BAOA steps; which are the two places where the jumps are performed in BJAOAJB, hence the jump rate $\lambda$ is evaluated). We define a BJAOAJB chain $(Y_n,W_n)$ which follows $(X_n,V_n)$ up to half-step $S$ (i.e. up to the intermediary half-step J where a jump occurs), at which point a velocity jump is performed, and after that $(Y_n,W_n)$ is evolved independently from the BAOAB chain.  Similarly we define a BJAOAJB chain $(Y_n',W_n')_{n\in\N}$ initialized at $(x',v')$ from the BAOAB trajectory $(X_n',V_n')_{n\in\N}$ (with the same variable $E$ to define the first jump time). Let $\lambda_*=\sup_{K'}\lambda$ and $\mathcal A_3=\{E>\lambda_*\}$, which is independent from the two BAOAB chains and in particular from $\mathcal A_1$ and $\mathcal A_2$.  Under $\mathcal A_1\cap \mathcal A_2\cap\mathcal A_3$,   the jump rate is bounded by $\lambda_*>0$ along the two coupled BAOAB trajectories and thus $S>2\lfloor 1/\delta\rfloor$, which means there is no jump up to time $\lfloor 1/\delta\rfloor$. Hence, under this event, the two BJAOAJB trajectories coincide with the BAOAB ones and thus have merged at time $\lfloor 1/\delta\rfloor$. As a conclusion,
\[\|\delta_{(x,v)}  Q^{\lfloor 1/\delta\rfloor} - \delta_{(x',v')}  Q^{\lfloor 1/\delta\rfloor}\|_{TV} \leqslant 2 \po 1-\mathbb P\po \mathcal A_1\cap \mathcal A_2\cap\mathcal A_3 \pf \pf \leqslant 2 (1 - e^{-\lambda_*}\varepsilon/2)\,.\]
This concludes the proof of~\eqref{eq:DoeblinHarris}, hence of Theorem~\ref{thm:BJAOAJBergodic}.
\end{proof}

\subsection{Expansion of the invariant measure of the numerical scheme}\label{sec:expansion}
\begin{proof}[Proof of Theorem~\ref{thm:TalayTubaro}]
Let $f\in\mathcal A,\delta_0>0$ such that the result of Theorem~\ref{thm:BJAOAJBergodic} holds and $\delta\in(0,\delta_0]$. The ergodicity of the continuous time process $(Z_t)_{t\geqslant 0}$ (Theorem~\ref{thm:estimates}) and the BJAOAJB chain $(\overline{Z}_n)_{n\geqslant 0}$ (Theorem~\ref{thm:BJAOAJBergodic}) imply, since $\|f\|_{V_b}<\infty$ for some $b\in(0,1/2)$, that
\[
    \mu f = \lim_{n\rightarrow\infty}\frac{1}{n}\sum_{k=1}^n\E_z[f(Z_{k\delta})] \,,\qquad 
    \mu_\delta f = \lim_{n\rightarrow\infty}\frac{1}{n}\sum_{k=1}^n\E_z[f(\overline{Z_k})]\,.
\]
Thanks to  Theorem \ref{thm:finitetimeerror}, for any $n\geqslant 1$,
\begin{align*}
    \left|\frac{1}{n}\sum_{k=1}^n \E_z[f(Z_{k\delta})-f(\overline{Z_k})]\right| &\leqslant \frac{C\delta^2}{n}\sum_{k=1}^n(V_b(z)e^{-nq\delta}+1) \\
    &\leqslant C\delta^2\po \frac{V_b(z)}{n(1-e^{-q\delta})}+1\pf\,. 
\end{align*}
Thus, by taking the limit as $n\rightarrow\infty$ in the previous inequality, we get that
\begin{equation}\label{eq:order2proof}
|\mu f -\mu_\delta f| \leqslant C\delta^2\,.
\end{equation}
Let us now look at the error term of order $\delta^2$. Again, we saw in Theorem~\ref{thm:finitetimeerror} that for all $k\in\N^*$,
\[\left|\E_z[f(Z_{k\delta})-f(\overline{Z_k})]-\delta^3\sum_{p=0}^{k-1}\E_z[S_3P_{(k-p-1)\delta}f(\overline{Z_p})]\right| \leqslant C\delta^3(V_b(z)e^{-qn\delta}+1)\,,\]
thus
\begin{equation}
    \label{eq:order3proof}
\left|\frac{1}{n}\sum_{k=1}^n\po\E_z[f(Z_{k\delta})-f(\overline{Z_k})]-\delta^3\sum_{p=0}^{k-1}\E_z[S_3P_{(k-p-1)\delta}f(\overline{Z_p})]\pf\right|\leqslant C\delta^3 \po \frac{V_b(z)}{n(1-e^{-q\delta})}+1\pf\,.
\end{equation}
Then,
\begin{align*}
    \frac{1}{n}\sum_{k=1}^n\sum_{p=0}^{k-1}\E_z[S_3P_{(k-p-1)\delta}f(\overline{Z_p})] &= \frac{1}{n}\sum_{k=1}^n\sum_{p=0}^{k-1}\E_z[S_3P_{p\delta}f(\overline{Z}_{k-p-1})] \\
    &= \frac{1}{n}\sum_{p=0}^{n-1}\sum_{k=p+1}^{n}\E_z[S_3P_{p\delta}f(\overline{Z}_{k-p-1})] \\
    &= \frac{1}{n}\sum_{p=0}^{n-1}\sum_{k=0}^{n-p-1}\E_z[S_3P_{p\delta}f(\overline{Z}_{k})] \\
    &= \frac{1}{n}\sum_{p=0}^{n-1}\sum_{k=0}^{n}\E_z[S_3P_{p\delta}f(\overline{Z}_{k})]-\frac{1}{n}\sum_{p=0}^{n-1}\sum_{k=n-p}^{n}\E_z[S_3P_{p\delta}f(\overline{Z}_{k})]\,.
\end{align*}
By ergodicity of the numerical scheme, for all $p\in\N$,
\[ \frac{1}{n}\sum_{k=0}^n \E_z[S_3P_{p\delta}f(\overline{Z}_{k})]\underset{n\rightarrow\infty}{\longrightarrow} \mu_\delta S_3P_{p\delta}f\,,\]
and thanks to Lemma~\ref{lem:bornereste} and Proposition~\ref{prop:lyapunovscheme}, there exist $b\in(0,1/2),q>0$ and $C>0$ such that
\begin{align*}
    \frac{1}{n}\left|\sum_{k=0}^n \E_z[S_3P_{p\delta}f(\overline{Z}_{k})]\right| &\leqslant \frac{C}{n}e^{-pq\delta}\sum_{k=0}^n\E_z[V_b(\overline{Z}_k)] \\
   &\leqslant Ce^{-pq\delta}\po \frac{V_b(z)}{n}\sum_{k=0}^n e^{-C'\delta k} + 1\pf \\
   &\leqslant Ce^{-qp\delta}\po \frac{V_b(z)}{n(1-e^{-C'\delta})}+1\pf\,.
\end{align*}
Since the right term of the last inequality is summable in $p$, Lebesgue convergence theorem implies that
\[\frac{1}{n}\sum_{p=0}^{n-1}\sum_{k=0}^{n}\E_z[S_3P_{p\delta}f(\overline{Z}_{k})]\underset{n\rightarrow\infty}{\longrightarrow} \sum_{p=0}^\infty \mu_\delta S_3P_{p\delta}f\,.\]

Similarly, there exist $b\in(0,1/2),q>0$ and $C>0$ such that
\begin{align*}
    \frac{1}{n}\sum_{p=0}^{n-1}\sum_{k=n-p}^n \E_z[|S_3P_{p\delta}f(\overline{Z}_{k})|] &\leqslant \frac{C} {n}\sum_{p=0}^{n-1}e^{-qp\delta}\po V_b(z)\sum_{k=n-p}^n e^{-C'\delta k}+p\pf \\ 
    &\leqslant \frac{C}{n}\sum_{p=0}^{n-1}e^{-qp\delta}\po \frac{V_b(z)}{1-e^{-C'\delta}}+p\pf\underset{n\rightarrow\infty}{\longrightarrow} 0\,,
\end{align*}
which shows that
\[\frac{1}{n}\sum_{k=1}^n\sum_{p=0}^{k-1}\E_z[S_3P_{(k-p-1)\delta}f(\overline{Z_p})]\underset{n\rightarrow\infty}{\longrightarrow} \sum_{p=0}^\infty \mu_\delta S_3P_{p\delta}f\,.\]
We may then let $n\rightarrow \infty$ in \eqref{eq:order3proof} to obtain
\[
\left|\mu f - \mu_\delta f - \delta^3\sum_{p=0}^\infty \mu_\delta S_3P_{p\delta}f\right|\leqslant C\delta^3\,.\]
We have shown that
\[|\mu f - \mu_\delta f - \delta^2\mu_\delta g_\delta|\leqslant C\delta^3\quad\text{ with }\quad g_\delta = \delta\sum_{p=0}^\infty S_3 P_{p\delta} f\,.\]
 Thanks to Theorem~\ref{thm:estimates} and Lemma~\ref{lem:bornereste}, for all $t\geqslant 0$,
\[ |\partial_t S_3 P_t f(z)| = |S_3LP_t f(z)|\leqslant C^{-qt}V_b(z)\,,\]
for a certain $b<1/2$. Therefore, for any $n\in\N^*$,
\begin{align*}
    \left|\int_{0}^{n\delta}S_3P_s f\dd s - \delta\sum_{p=0}^{n-1} S_3P_{p\delta}f \right| &\leqslant \sum_{p=0}^{n-1}\int_{p\delta}^{(p+1)\delta}|S_3P_s f - S_3P_{p\delta} f|\dd s \\
    &\leqslant \sum_{p=0}^{n-1}\sup_{p\delta\leqslant t \leqslant (p+1)\delta}|\partial_t S_3 P_t f|\int_{p\delta}^{(p+1)\delta}(s-p\delta)\dd s \\
    &\leqslant \frac{CV_b(z)\delta^2}{2}\sum_{p=0}^{n-1}e^{-pq\delta} \\
    &\leqslant \frac{CV_b(z)\delta}{2}\,,
\end{align*}
which implies, by taking the limit $n\rightarrow\infty$, that
\[\|g_\delta - g\|_{V_b} \leqslant C\delta \quad \text{ with }\quad g = \int_0^\infty S_3P_sf\dd s\,,\]
and thus
\[|\mu_\delta g_\delta - \mu_\delta g|\leqslant C \delta\,.\]
Again, $g\in\mathcal{A}$. Indeed, if $f\in\mathcal A$, thanks to Lemma~\ref{lem:stability}, we know that all spatial derivatives
of $S_3P_t f$ are bounded by a term of the form $Ce^{-qt}V_b(z)$ for a certain $b<1/2$, which is integrable in $t$ on $\R_{+}$. Therefore, the theorem of differentiation under the integral sign yields that for all $\alpha\in\N^d$,
\[|\partial^\alpha g(z)|\leqslant CV_b(z)\int_{0}^\infty e^{-qs}\dd s \leqslant CV_b(z)\,.\]
This allows to apply~\eqref{eq:order2proof} to the function $g$, that is
\[|\mu_\delta g - \mu g|\leqslant C\delta^2\,.\]
Finally,
\begin{align*}
    |\mu f - \mu_\delta f - \delta^2\mu g| &\leqslant  |\mu f - \mu_\delta f - \delta^2\mu_\delta g_\delta| + \delta^2|\mu_\delta g_\delta - \mu_\delta g| + \delta^2|\mu_\delta g - \mu g| \\
    &\leqslant C\delta^3\,.
\end{align*}
We have therefore explicited the first term in the expansion of the invariant measure:
\[\mu_\delta f = \mu f + \delta^2\int_0^\infty \mu S_3P_sf\dd s+\mathcal{O}(\delta^3)\,,\]
which concludes the proof of the theorem.
\end{proof}

\subsection{Quadratic risk}\label{sec:quadraRisk}

\begin{proof}[Proof of Corollary~\ref{cor:quadraticRisk}]

Denoting $g=f-\mu(f)$ and $\nu_0$ the distribution of $Z_0$,
\begin{eqnarray*}
\mathbb E \po \left|\frac1n\sum_{k=1}^{n}  f(Z_k)-\mu(f)\right|^2\pf   & = & \frac1{n^2} \sum_{j=1}^n \sum_{k=1}^{n} \mathbb E \po   g(Z_k)g(Z_j)\pf \\
&= & \frac1{n^2} \sum_{j=1}^n \nu_0 Q^j(g^2) +  \frac2{n^2} \sum_{j=1}^n \sum_{k=j+1}^{n} \nu_0 Q^j\po g Q^{k-j} g\pf\,. 
\end{eqnarray*}
Bounding $g^2 \leqslant 4\|f\|_{V_b}^2 V_b^2$ and using Proposition~\ref{prop:lyapunovscheme} (applied to $V_b^2=V_{2b}$ since $b<1/4$), we treat the first term as 
\[\frac1{n^2} \sum_{j=1}^n \nu_0 Q^j(g^2) \leqslant \frac{4\|f\|_{V_b}^2}{n^2} \sum_{j=1}^n \nu_0 Q^j(V_b^2) \leqslant \frac{4\|f\|_{V_b}^2}{n^2} \sum_{j=1}^n \co (1-C\delta)^j \nu_0(V_b^2) + \frac{C'}{\delta C}\cf   \,. \]
For the second term, thanks to Theorem~\ref{thm:BJAOAJBergodic}, we bound $\|Q^k g\|_{V_b} \leqslant 2 C e^{-\lambda k\delta}\|f\|_{V_b}$, hence $|g Q^k g| \leqslant 4 C e^{-\lambda k\delta}\|f\|_{V_b}^2V_b^2$, and then
\[
\frac2{n^2} \sum_{j=1}^n \sum_{k=j+1}^{n} \nu_0 Q^j\po g Q^{k-j} g\pf 
\leqslant
\frac{ 8 C \|f\|_{V_b}^2}{n^2} \sum_{j=1}^n \sum_{k=j+1}^{n}  e^{-\lambda (k-j)\delta} \nu_0 Q^j(V_b^2) \\
\leqslant \frac{ 8 C \|f\|_{V_b}^2}{n^2(1-e^{-\lambda \delta})} \sum_{j=1}^n     \nu_0 Q^j(V_b^2) \,,\]
and we bound $\sum_{j=1}^n     \nu_0 Q^j(V_b^2)$ as before.

For the Richardson extrapolation, we simply bound
\begin{multline*}
\mathbb E \po \left|\frac4{3n}\sum_{k=1}^{n}  f(\tilde Z_k)-\frac1{3n}\sum_{k=1}^{n}  f(Z_k)-\mu(f)\right|^2\pf \leqslant 3 \mathbb E \po \left|\frac4{3n}\sum_{k=1}^{n}  f(\tilde Z_k)-\frac43 \mu_{\delta/2}(f)\right|^2\pf \\
+ 3 \mathbb E \po \left|\frac1{3n}\sum_{k=1}^{n}  f( Z_k)-\frac13 \mu_{\delta}(f)\right|^2\pf  +3 \po\frac43 \mu_{\delta/2}(f) - \frac13 \mu_\delta(f) - \mu(f)\pf^2  \,.
\end{multline*}
The two first terms are bounded by $\frac{C}{n\delta}$ for some $C>0$ as before, and conclusion follows from Theorem~\ref{thm:TalayTubaro} (the leading terms of order $\delta^2$ in the asymptotic bias  cancelling out).
\end{proof}

\subsection*{Acknowledgments}

The research of P.M. is supported by the projects SWIDIMS (ANR-20-CE40-0022) and CONVIVIALITY (ANR-23-CE40-0003) of the French National Research Agency. L.J.  is supported by the grant n°200029-21991311 from the Swiss National Science Foundation. N.G. acknowledges a Ph.D. fellowship from Qubit Pharmaceuticals.

\bibliographystyle{abbrv}
\bibliography{biblio} 
\end{document}